\numberwithin{equation}{section} % Numérotation des équations.
\newtheorem{prop}{Proposition}[section]
\newtheorem{definition}[prop]{Definition}
\newtheorem{Lemma}[prop]{Lemma}
\newtheorem{example}[prop]{Example}
\newtheorem{theorem}[prop]{Theorem}
\newtheorem{remark}[prop]{Remark}
\newtheorem{corollary}[prop]{Corollary}
\newcommandx{\lucas}[2][1=]{\todo[linecolor=red,backgroundcolor=red!25,bordercolor=red,#1, author = Lucas]{#2}} % Commentary.
\newcommandx{\lorenzo}[2][1=]{\todo[linecolor=blue,backgroundcolor=blue!25,bordercolor=blue,#1, author = Lorenzo]{#2}} % Commentary.
\newcommand{\writefun}[5]{\ensuremath{\begin{array}[t]{lrcl}
#1 : & #2 & \longrightarrow & #3 \\
    & #4 & \longmapsto & #5 \end{array}}} % Écriture de fonctions.
\DeclareMathOperator\supp{supp} % Support d'une fonction.
\newcommand{\R}{\mathbb{R}}
\newcommand{\cI}{\mathcal{I}}
\renewcommand{\d}{\, {\rm d}}
\title{The Sewing lemma for $0 < \gamma \leq 1$}
\date{\today}
\author{Lucas Broux and Lorenzo Zambotti}
\address[Lucas Broux]{Sorbonne Universit\'e, Laboratoire de
	Probabilit\'es, Statistique et Mod\'elisation, 4 Pl. Jussieu, 75005 Paris,France}
\email{\href{mailto:lucas.broux@upmc.fr}{\nolinkurl{lucas.broux@upmc.fr}}}
\address[Lorenzo Zambotti]{Sorbonne Universit\'e, Laboratoire de
	Probabilit\'es, Statistique et Mod\'elisation, 4 Pl. Jussieu, 75005 Paris,France}
\email{\href{mailto:zambotti@lpsm.paris}{\nolinkurl{zambotti@lpsm.paris}}}
\subjclass[2020]{60L20, 16T05}
\keywords{Rough Paths, Sewing Lemma, Hopf algebras, Regularity structures}
\begin{document}

\begin{abstract}
We establish a Sewing lemma in the regime $\gamma \in \left( 0, 1 \right]$, constructing a Sewing map which is neither unique nor canonical,
but which is nonetheless continuous with respect to the standard norms.
Two immediate corollaries follow, which hold on any commutative graded connected locally finite Hopf algebra: a simple constructive proof of the Lyons-Victoir extension theorem which associates to a H\"older path a rough path, with the additional result that this map can be made continuous; the bicontinuity of a transitive free action of a space of H\"older functions on the set of Rough Paths.
\end{abstract}

\maketitle

\tableofcontents

\section{Introduction}

Rough paths were introduced by T. Lyons in \cite{MR1654527} in order to give a robust theory for controlled
ordinary differential equations: for $Y:[0,T]\to\R^k$ of class $C^1$ and $\sigma:\R^d\to\R^d\otimes\R^k$ smooth,
one studies the equation
\[
X_t=X_0+\int_0^t\sigma(X_s)\, {\rm d}Y_s, \qquad t\geq 0,
\]
and the aim is to extend the map $Y\mapsto X$ to paths $Y$ of class $C^\alpha$ with $\alpha\in(0,1)$, in order
to include the case of Brownian motion and therefore stochastic differential equations.

A few years later, a new analytical tool was introduced to study such rough differential equations: the Sewing lemma \cite{MR2091358, MR2261056}, 
%MR2372834
which allows to define uniquely a notion of integral $I_t = \int_{0}^t X_s \, {\rm d} Y_s$ in situations where $X$ and $Y$ may be paths of low regularity.
For example, in the so called Young regime, i.e.\ when $X$ and $Y$ have H\"older regularity $\alpha$ resp.\ $\beta$ with $\alpha + \beta > 1$, it is a classical result due to Young \cite{MR1555421} and Kondurar \cite{kondurar1937integrale} that a canonical integration theory
exists for $I_t = \int_{0}^t X_s \, {\rm d} Y_s$. The Sewing Lemma recovers this setting by showing that there exists \emph{one and only one} $I:[0,T]\to\R$ such that 
\[
I_0=0, \qquad \left| I_t - I_s - Y_s \left( X_t - X_s \right) \right| \lesssim \left| t - s \right|^{\alpha + \beta}.
\]

This raises the following general question: given $A \colon [0,T]^2 \to \mathbb{R}$ and $\gamma > 0$, does there exist $I \colon [0,T] \to \mathbb{R}$ satisfying $\left| I_t - I_s - A_{s, t} \right| \lesssim \left| t - s \right|^{\gamma}$ uniformly over $s \leq t \in [0,T]$?
The Sewing lemma gives a simple answer to this question when $\gamma > 1$: it asserts that such an $I$ exists 
(uniquely if $I_0=0$) as soon as $\left| A_{s, t} - A_{s, u} - A_{u, t} \right| \lesssim \left| t - s \right|^{\gamma}$ uniformly over $s \leq u \leq t \in [0,T]$.
This generalises Young-Kondurar's integration theory, and 
allows to build a well-posedness theory for rough differential equations.

Surprisingly, the Sewing lemma has not been extended yet to cover the case $\gamma\in(0,1]$. It is clear that
the situation is different, because this time the relation $\left| I_t - I_s - A_{s, t} \right| \lesssim \left| t - s \right|^{\gamma}$ does not characterise $I$ anymore: indeed, any other $\tilde I$ has the same property if 
and only if $I-\tilde I$ is $\gamma$-H\"older. However, existence of $I$ under the hypothesis that
$\left| A_{s, t} - A_{s, u} - A_{u, t} \right| \lesssim \left| t - s \right|^{\gamma}$ is not known. The aim of
this paper is to fill this gap, including the case $\gamma=1$ for which the result is slightly different.

As in the case of $\gamma>1$, the Sewing lemma for $\gamma\leq 1$ is an analytic tool which can nicely interact with
algebraic structures. The link between algebra, analysis and probability was already clear in Terry Lyons' seminal paper
\cite{MR1654527}, where geometric rough paths were defined in terms of tensor algebras, following the work of
Kuo-Tsai Chen on iterated integrals \cite{MR73174}. Later Massimiliano Gubinelli
pushed this beautiful interaction further by defining branched rough paths in terms of the Butcher-Connes-Kreimer
Hopf algebra \cite{MR2578445}. 

In this paper we show that the Sewing lemma, both for $\gamma>1$ and $\gamma\leq 1$,
allows to extend this framework to rough paths on a general (commutative graded connected locally finite) Hopf algebra. This includes
geometric and branched rough paths and also other notions introduced recently, like quasi-geometric rough paths \cite{Be20},
planarly branched rough paths \cite{MR4120383} and the (so far un-named) Hopf algebra of \cite[Section 6]{LOT21}.

In particular, the main application of the Sewing lemma for $\gamma\leq 1$ is the construction of rough paths over a 
H\"older path. The fact that one can always lift an $\alpha$-H\"older path for $\alpha\in(0,1)$ to a 
$\alpha$-geometric rough path has been long known, at least since the paper by Lyons-Victoir \cite{MR2348055}. However
their construction was based on a (beautiful) geometric and algebraic construction, which famously mentioned
the axiom of choice and was therefore considered as non-constructive. 

In \autoref{section:application_extension_theorem} we show that the Sewing lemma in the case $\gamma<1$ allows to
construct inductively in a simple way rough paths on a Hopf algebra over a $\alpha$-H\"older path for $\alpha\in(0,1)$,  up to
level $N:=\lfloor 1/\alpha\rfloor$ (higher levels are uniquely determined by the Sewing lemma for $\gamma>1$). Even
more, this construction is \emph{continuous} with respect to the relevant metrics. This was already known for
the second level of a geometric rough path, which can be reduced to an integral of the form $\int_{0}^t X_s \, {\rm d} Y_s$ in a case where the Young condition is not satisfied. Here we extend this to a general result.

Such a Sewing lemma in the regime $\gamma \in \left( 0, 1 \right]$ is proved in \autoref{thm:weak_Sewing} below.
Contrary to the case $\gamma > 1$, the constructed integral $I$ is not unique and is not defined by Riemann-type sums.
However, $I$ can be chosen to be linear in $A$ and continuous in an appropriate topology.
Note that in the context of regularity structures %\cite{MR3274562, MR3935036, chandra2018analytic, MR4210726}, 
this is very close to the Reconstruction Theorem \cite{MR3274562, caravenna2020hairers} in the negative exponent case, where uniqueness is lost and different approximations are used, see Section \ref{sec:link} below for a discussion.

Another application of the Sewing lemma for $\gamma\in(0,1)$ is the bicontinuity of a natural bijection between
the set of rough paths on a Hopf algebra and a linear space of H\"older functions. This also extends to a general result
the content of \cite[Corollary~1.3]{MR4093955}, where a natural bijection between these
spaces, in the context of branched rough paths, had been constructed using a constructive Lyons-Victoir extension technique, 
but no proof of the continuity of this map was available. We mention that the continuity of this action and
of a Lyons-Victoir extension in the context of branched rough paths has been obtained using paraproducts in
\cite[Theorem 21]{BH1} and \cite[Corollary 3]{BH2}.

We note that other extensions results have been obtained since \cite{MR2348055}, see e.g.\ the renormalization method of 
\cite{MR2657813,MR3057184}, and \cite{MR2789583}, which uses probabilistic techniques in the case of the fractional Brownian motion.
Finally, we mention that it should be possible to extend our method to the Besov setting \cite{FS21}. Future extensions might
involve the framework of the Stochastic Sewing Lemma \cite{MR4089788}, the rough paths approach to non-commutative 
stochastic calculus \cite{MR3062538,BN21} and the rough paths approach to McKean-Vlasov equations \cite{DS21}.

\medskip
\noindent \textbf{Acknowledgements.}
We are very grateful to Cyril Labb\'e and Fran\-cesco Caravenna for their valuable insights and advice.

\section{Sewing lemmas}\label{section:weak_Sewing_lemma}

In the following we consider a time horizon $T > 0$.
We note $$\Delta_T^n:=\{(x_1,\ldots,x_n)\in\R^n: 0\leq x_1\leq \cdots \leq x_n\leq T\}.$$
We denote $\mathcal{C} \left( \Delta_T^n \right)$ the space of $\mathbb{R}$-valued continuous functions on $\Delta_T^n$; and $\mathcal{C}_n$ the space of $\mathbb{R}$-valued continuous functions on $[ 0, T ]^n$.

Note well that here the subscript $n$ corresponds to the dimension of the domain (and not the regularity of the function).

	\subsection{Presentation of the result}
Recall the usual Sewing lemma:
\begin{theorem}[{Sewing lemma for $\gamma>1$ \cite[Proposition~1]{MR2091358}, \cite[Lemma~2.1]{MR2261056}}]
\label{thm:usual_Sewing_lemma}
Let $\gamma > 1$ and $A \colon \Delta_T^2 \to \mathbb{R}$ be a continuous function such that
	\begin{equation}
		\left| A_{s, t} - A_{s, u} - A_{u, t} \right| \lesssim | t -  s |^{\gamma} ,
	\end{equation}
uniformly over $0\leq s\le u\le t\le T$.
Then there exists a unique function $I \colon [0,T] \to \mathbb{R}$ such that $I_{0} = 0$ and:
	\begin{equation}
		\left| I_t - I_s - A_{s, t} \right| \lesssim \left| t - s \right|^{\gamma} ,
	\end{equation}
uniformly over $0\le s\le t\le T$. Moreover $I$ is the limit of Riemann-type sums
\begin{equation}\label{eq:IPA}
	I_t = \lim_{|\mathcal{P}|\to 0} \ \sum_{i=0}^{\#\mathcal{P}-1}
	A_{t_{i} t_{i+1}}
\end{equation}
along \emph{arbitrary partitions} $\mathcal{P}$ of
$[0,T]$ with vanishing mesh
$|\mathcal{P}|\to 0$.
\end{theorem}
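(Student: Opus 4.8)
The plan is to prove uniqueness first and then existence via a convergent construction. For uniqueness, suppose $I$ and $\tilde I$ both satisfy the defining bound with $I_0 = \tilde I_0 = 0$; then $J := I - \tilde I$ satisfies $|J_t - J_s| \lesssim |t-s|^\gamma$ with $\gamma > 1$. A standard telescoping argument over a uniform partition of $[s,t]$ into $n$ pieces gives $|J_t - J_s| \lesssim n \cdot (|t-s|/n)^\gamma = |t-s|^\gamma n^{1-\gamma} \to 0$, so $J \equiv 0$.

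For existence, I would construct $I$ as the limit of dyadic Riemann sums. Fix $t \in [0,T]$ and let $\mathcal{P}_n$ be the dyadic partition of $[0,t]$ of mesh $t 2^{-n}$, setting $I^n_t := \sum_i A_{t_i, t_{i+1}}$ over the intervals of $\mathcal{P}_n$. Passing from $\mathcal{P}_n$ to $\mathcal{P}_{n+1}$ inserts one midpoint per interval, so $I^{n+1}_t - I^n_t = -\sum_i \big( A_{t_i, t_{i+1}} - A_{t_i, m_i} - A_{m_i, t_{i+1}} \big)$, and each summand is bounded by $C (t 2^{-n})^\gamma$; summing the $2^n$ terms yields $|I^{n+1}_t - I^n_t| \lesssim t^\gamma 2^{-n(\gamma-1)}$. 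Since $\gamma > 1$ this is summable, so $(I^n_t)_n$ is Cauchy; call the limit $I_t$. One then checks the bound $|I_t - I_s - A_{s,t}| \lesssim |t-s|^\gamma$: write $I_t - I_s$ as the limit of dyadic sums over $[s,t]$ (using that dyadic partitions of $[0,t]$ and $[0,s]$ combine appropriately, or re-running the argument directly on $[s,t]$), and telescope against the single term $A_{s,t}$ exactly as above to get $|I_t - I_s - A_{s,t}| \le \sum_{n \ge 0} C |t-s|^\gamma 2^{-n(\gamma-1)} = C' |t-s|^\gamma$. Continuity of $I$ follows from this bound together with continuity of $A$, and $I_0 = 0$ is immediate.

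Finally, to upgrade convergence along dyadic partitions to convergence along \emph{arbitrary} partitions with vanishing mesh, I would show that for any partition $\mathcal{P}$ of $[0,t]$, $\big| \sum_{\mathcal{P}} A_{t_i, t_{i+1}} - I_t \big| \lesssim |\mathcal{P}|^{\gamma - 1} t$: one bounds $\sum_{\mathcal{P}} A_{t_i,t_{i+1}} - I_t = \sum_i (A_{t_i,t_{i+1}} - (I_{t_{i+1}} - I_{t_i}))$ using the already-established Sewing bound on each subinterval, giving $\lesssim \sum_i |t_{i+1}-t_i|^\gamma \le |\mathcal{P}|^{\gamma-1} \sum_i |t_{i+1}-t_i| = |\mathcal{P}|^{\gamma-1} t \to 0$.

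The main obstacle is organizing the dyadic telescoping estimate cleanly — in particular making sure the constants do not accumulate across scales, which is exactly where the hypothesis $\gamma > 1$ enters through the geometric series $\sum_n 2^{-n(\gamma-1)} < \infty$. Everything else is routine bookkeeping; the contrast with the regime $\gamma \le 1$ treated in the rest of the paper is precisely that this series diverges, so uniqueness fails and a different, non-canonical construction is needed.
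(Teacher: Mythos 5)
The paper does not actually prove \autoref{thm:usual_Sewing_lemma}; it is recalled as a known result with citations to \cite{MR2091358} and \cite{MR2261056}, so there is no in-paper proof to compare against. Your uniqueness argument is complete and correct. Your overall existence plan is the standard one, but there is a genuine gap in the existence step. You define $I_t$ as the limit of Riemann sums along the dyadic partitions of $[0,t]$ (with mesh $t\,2^{-n}$), and then assert that ``$I_t - I_s$ can be written as the limit of dyadic sums over $[s,t]$.'' This is exactly the step that does not follow. The dyadic points of $[0,t]$ are $\{k t\,2^{-n}\}$ while those of $[0,s]$ are $\{k s\,2^{-n}\}$, which share only $0$ for generic $s/t$; consequently $I^n_t - I^n_s$ is not a Riemann sum for $A$ over $[s,t]$, and the two partition families do not ``combine.'' The alternative you mention, re-running the argument directly on $[s,t]$, produces a separate two-parameter quantity $J_{s,t}$ with $|J_{s,t}-A_{s,t}|\lesssim |t-s|^\gamma$, but you never establish the additivity $J_{0,t}-J_{0,s}=J_{s,t}$. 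Without it, all one can deduce from the dyadic telescoping is $|J_{0,t}-J_{0,s}-A_{s,t}|\lesssim t^\gamma$, which is far weaker than the required $|t-s|^\gamma$. Note also that your final step (upgrading from dyadic to arbitrary partitions) invokes ``the already-established Sewing bound'', so it cannot be used to patch this gap without circularity.

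The standard way to close the gap is to prove convergence along \emph{arbitrary} partitions with vanishing mesh from the outset, rather than only dyadic ones. One first proves the uniform maximal estimate $\bigl|\sum_{\mathcal{P}} A_{t_i,t_{i+1}} - A_{s,t}\bigr| \le C_\gamma |t-s|^\gamma$ for \emph{every} partition $\mathcal{P}$ of $[s,t]$, by successively removing from $\mathcal{P}$ an interior point $t_j$ chosen by pigeonhole so that $t_{j+1}-t_{j-1} \le 2|t-s|/(\#\mathcal{P}-1)$; each removal costs at most $C\bigl(2|t-s|/k\bigr)^\gamma$ and the series is summable since $\gamma>1$. Applying this maximal estimate to the sub-partitions induced on each interval $[t_i,t_{i+1}]$ of a coarse $\mathcal{P}$ by a refinement $\mathcal{P}'\supset\mathcal{P}$ gives $|S(\mathcal{P}')-S(\mathcal{P})|\lesssim |\mathcal{P}|^{\gamma-1}|t-s|$, hence the Cauchy property for the net of all partitions. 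Once the limit exists along arbitrary partitions, additivity of $J$ is automatic (restrict to partitions of $[0,t]$ that contain $s$), the Sewing bound follows as you intended, and the last step of your plan becomes a legitimate consequence rather than a circular one.
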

In \eqref{eq:IPA}, a \emph{partition} of the interval $[a,b]$ is a finite sequence of ordered points $\mathcal{P} = \{a = t_0 < t_1 < \ldots < t_k = b\}$; moreover we denote
$\#\mathcal{P} = k$ and $|\mathcal{P}| := \max_{i=1,\ldots, \#\mathcal{P}} |t_i-t_{i-1}|$.

\medskip
In this section, we establish the following theorem, extending the scope of the Sewing lemma to the regime $0 < \gamma \leq 1$.
\begin{theorem}[Sewing lemma for $0 < \gamma \leq 1$]\label{thm:simple_formulation_of_weak_Sewing}
Let $0 < \gamma \leq 1$ and $A \colon \Delta_T^2 \to \mathbb{R}$ be a continuous function such that
	\begin{equation}\label{eq:Sewing_assumption}
		\left| A_{s, t} - A_{s, u} - A_{u, t} \right| \lesssim | t -  s |^{\gamma} ,
	\end{equation}
uniformly over $0\leq s\le u\le t\le T$.
Then there exists a (non-unique) function $I \colon [0,T] \to \mathbb{R}$ such that $I_{0} = 0$ and:
	\begin{equation}\label{eq:Sewing_bound_in_simple_theorem}
		\left| I_t - I_s - A_{s, t} \right| \lesssim 			\begin{dcases}
				|t-s|^{\gamma} & \text{ if } 0 < \gamma < 1 , \\
				|t-s|\left( 1+\left| \log |t-s| \right| \right)  & \text{ if } \gamma = 1 , 
			\end{dcases}
	\end{equation}
uniformly over $0\le s\le t\le T$, and the map $A\mapsto I$ is linear.
\end{theorem}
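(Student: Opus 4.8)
The plan is to construct $I$ directly by a dyadic/multiscale procedure rather than by Riemann sums, mimicking the classical proof but abandoning the contraction that gives uniqueness. First I would reduce to the interval $[0,1]$ by rescaling time, and normalize so that the defect $\delta A_{s,u,t} := A_{s,t} - A_{s,u} - A_{u,t}$ satisfies $|\delta A_{s,u,t}| \le |t-s|^\gamma$. The key object is the increment operator $\delta$ acting on functions of one time variable: given $f \colon [0,T] \to \mathbb{R}$ one sets $(\delta f)_{s,t} = f_t - f_s$, and one wants $A - \delta I$ to be small in the appropriate modulus. Since $\delta A$ does not vanish, one cannot hope to solve $\delta I = A$ exactly; instead one solves it "up to order $\gamma$".

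The concrete construction I would use: for each dyadic level $n$, consider the partition of $[0,1]$ into intervals $[k 2^{-n}, (k+1) 2^{-n}]$, and define $I$ on dyadic points recursively. Set $I_0 = 0$, and having defined $I$ at all points of level $n$, define at a new midpoint $m = (2k+1)2^{-(n+1)}$ of the interval $[s,t] = [k2^{-n}, (k+1)2^{-n}]$ the value $I_m := I_s + A_{s,m}$ (or symmetrically $I_s + \tfrac12(A_{s,m} + I_t - A_{m,t} - I_s)$ — either choice works, the asymmetric one being simplest). The point is that this forces $I_m - I_s - A_{s,m} = 0$ at the scale of creation, and the only error accumulated at a pair $(s,t)$ comes from summing the defects $\delta A$ across the dyadic scales between them. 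A telescoping estimate then bounds $|I_t - I_s - A_{s,t}|$ for dyadic $s < t$ by $\sum_{j} (\#\text{intervals at level } j \text{ inside } [s,t]) \cdot 2^{-j\gamma}$, which for $\gamma < 1$ sums to a geometric series $\lesssim |t-s|^\gamma$, and for $\gamma = 1$ gives exactly the harmonic-type sum producing the factor $1 + |\log|t-s||$. One then checks $I$ extends continuously to all of $[0,T]$ (the bound on dyadic increments gives a modulus of continuity, using continuity of $A$), and that the bound \eqref{eq:Sewing_bound_in_simple_theorem} passes to arbitrary $s \le t$ by approximation. Linearity of $A \mapsto I$ is immediate since every step of the construction is linear in $A$.

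The main obstacle is the $\gamma = 1$ borderline, where the naive estimate would only give a bound of order $|t-s|$ times the number of scales, i.e. a logarithmic loss, and one must verify this loss is genuinely only logarithmic and not worse — in particular that the errors across scales do not compound multiplicatively. This requires being careful that at each scale the increment $I_m - I_s - A_{s,m}$ is \emph{exactly zero at creation} (not merely small), so that the error in $I_t - I_s - A_{s,t}$ is a clean sum of order-$2^{-j}$ terms over the $O(\log(1/|t-s|))$ relevant scales inside $[s,t]$, rather than a recursively amplified quantity. A secondary technical point is handling non-dyadic endpoints and the passage to the continuous limit: one must show the partial construction is Cauchy in the sup norm on $[0,T]$ with an explicit modulus, which again follows from the dyadic increment bound together with uniform continuity of $A$ on the compact set $\Delta_T^2$. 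Everything else — rescaling, linearity, choosing constants — is routine.
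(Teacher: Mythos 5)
Your high-level plan (recursive dyadic construction, then density extension, linearity being manifest) matches the paper's, but the specific rule you propose as the default does not yield the bound, and this is not a small technicality. With $I_m := I_s + A_{s,m}$ and $R_{s,t}:=I_t-I_s-A_{s,t}$, one indeed gets $R_{s,m}=0$ at creation, but then
\[
R_{m,t}\;=\;R_{s,t}+(\delta A)_{s,m,t},
\]
so the right-half remainder inherits the \emph{entire} parent remainder. Nothing contracts. Following the rightmost dyadic branch from $[0,1]$ (with $I_1:=A_{0,1}$, so $R_{0,1}=0$), one finds
\[
R_{1-2^{-n},\,1}\;=\;\sum_{j=0}^{n-1}(\delta A)_{1-2^{-j},\;1-2^{-(j+1)},\;1},
\qquad
\bigl|R_{1-2^{-n},\,1}\bigr|\;\lesssim\;\sum_{j\geq0}2^{-j\gamma}\;=\;O(1),
\]
whereas \eqref{eq:Sewing_bound_in_simple_theorem} demands $O(2^{-n\gamma})$. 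So the asymmetric choice gives a bounded but \emph{non-decaying} remainder; your claim that "either choice works, the asymmetric one being simplest" is false, and the mechanism you emphasize (being "exactly zero at creation") is not what prevents amplification. The paper's correction term $u_{k,n}$ is precisely designed so that the parent remainder is halved at each bisection (a rule equivalent, up to a harmless shift by $\tfrac12\delta A$, to the symmetric choice you offer as an aside): this gives $v_{r,r}\leq\tfrac12 v_{r-1,r-1}+V(2^{-(r-1)})$, whose solution is $\sim 2^{-r\gamma}$ for $\gamma<1$ and $\sim r\,2^{-r}$ for $\gamma=1$, and \emph{this} contraction factor is the entire point.

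There is also a secondary gap in the claimed estimate for non-consecutive dyadics. The sum you write, $\sum_j(\#\{\text{level-}j\text{ intervals inside }[s,t]\})\cdot 2^{-j\gamma}$, with $|t-s|\sim 2^{-r}$ and $s,t\in D_M$, is $\sum_{j=r}^{M}2^{j-r}2^{-j\gamma}=2^{-r}\sum_{j=r}^{M}2^{j(1-\gamma)}$, which \emph{diverges} as $M\to\infty$ for every $\gamma\leq 1$ — this is exactly the same divergence that makes the classical Riemann-sum argument fail in this regime, so one cannot "telescope" naively. The paper instead tracks the two-parameter quantities $v_{r,M}$ and proves the recursion $v_{r,M}\leq 2\,v_{r+k_0,M}+W_{(k_0)}(r)$ with a stride $k_0>1/\gamma$, so that the amplification factor $2$ per step is offset by the decay $2^{-\gamma k_0}$ of $W_{(k_0)}$. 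Your proposal needs both fixes — the contracting midpoint rule to control consecutive dyadics, and a genuine two-scale bootstrap to handle general dyadic pairs — before it can deliver \eqref{eq:Sewing_bound_in_simple_theorem}.
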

In fact, we shall show a more general result in \autoref{thm:weak_Sewing} below, where we replace the right-hand side of \eqref{eq:Sewing_assumption} with $V (t-s)$ for general control functions $V$, see \autoref{def:weak_control_function}.
Note that then, \autoref{thm:simple_formulation_of_weak_Sewing} is an immediate consequence of \autoref{thm:weak_Sewing} and \autoref{ex:usual_control_function}.
Before turning to the statement and proof of \autoref{thm:weak_Sewing}, let us propose a few remarks.

\begin{remark}[Non-uniqueness]\label{remark:non_uniqueness}
It is straightforward to observe by the triangle inequality that any $I$ satisfying \eqref{eq:Sewing_bound_in_simple_theorem} for some $A \colon \Delta_T^2 \to \mathbb{R}$ and $\gamma \in (0,1)$ is determined up to a $\gamma$-H\"older function, i.e.\ if $I$ satisfies \eqref{eq:Sewing_bound_in_simple_theorem}, then $\tilde{I}$ also satisfies \eqref{eq:Sewing_bound_in_simple_theorem} if and only if $\tilde{I} - I$ is $\gamma$-H\"older. 
\end{remark}

\begin{remark}
In the usual Sewing lemma \autoref{thm:usual_Sewing_lemma}, $I$ is constructed as limit of Riemann-type sums, see \eqref{eq:IPA}.
In the case of \autoref{thm:simple_formulation_of_weak_Sewing}, we define $I$ with a different construction, namely
$I$ is defined explicitly (via a recursive formula) on the set of dyadic numbers and then
extended to $[0,T]$ by density.
Note that this approach is reminiscent of some constructive results of \cite{MR2348055, MR4093955}.
\end{remark}

\begin{remark}
The situation is similar to the setting of the Reconstruction Theorem \cite{MR3274562, caravenna2020hairers}, where different approximations are used depending on the sign of the exponent (named also $\gamma$), and where uniqueness is lost in the case of non-positive exponents. See \autoref{sec:link} below
for a discussion.
\end{remark}

	\subsection{The main technical result}

Now we turn to the statement and proof of our main technical result, \autoref{thm:weak_Sewing} below.
As in \cite{MR2372834}, we introduce a (new) notion of control function for our purposes.
We do not have a natural interpretation for this definition.
Rather, it corresponds to the quantity that appears in our proof below.

\begin{definition}[Control function]\label{def:weak_control_function}
If $V \colon [0,T] \to \mathbb{R}_+$, we set for $r, k_0 \in \mathbb{N}$:	
	\begin{align}
		\bar{V}_{(k_0)} \left(T 2^{-r} \right) & \coloneqq \left( k_0 + 1 \right) \sum\limits_{m = 0}^{+ \infty} 2^{m + 1} \, V \left( T 2^{- \left( r + m k_0 \right)} \right) \\
		& \quad + \sum\limits_{m = 0}^{+ \infty} \sum\limits_{k = 0}^{k_0} \sum\limits_{l = 1}^{r + m k_0 + k} 2^{ m + 1 - r - m k_0 - k + l} \, V \left( T 2^{- \left( l - 1 \right)} \right) .
	\end{align}

We say that $V$ is a $k_0$-control function if $V (0) = 0$, $V$ is increasing and for each $r \in \mathbb{N}$, $\bar{V}_{(k_0)} \left( T 2^{-r} \right) < + \infty$.
We extend $\bar{V}_{(k_0)} \colon [0,T] \to \mathbb{R}_+$ as follows: set $\bar{V}_{(k_0)} (0) \coloneqq 0$, and for $u \in \left( 0, 1 \right]$, set $\bar{V}_{(k_0)} \left( u \right) \coloneqq \bar{V}_{(k_0)} \left( T 2^{-r} \right)$ where $r \in \mathbb{N}$ is uniquely defined by $T 2^{- \left( r + 1 \right)} < u \leq T 2^{- r}$.
%Note that $\bar{V}$ depends on the choice of $T$.
\end{definition}

\begin{example} \label{ex:usual_control_function}
Let $\gamma > 0$, $V \left( u \right) \coloneqq u^{\gamma}$.
Then for any integer $k_0 > \frac{1}{\gamma}$, $V$ is a $k_0$-control function and there exists a constant $C = C_{k_0, \gamma}$ such that:
	\begin{equation}
		\bar{V}_{(k_0)} \left( u \right) \leq 
			\begin{dcases}
				C u^{\gamma} & \text{ if } 0 < \gamma < 1 , \\
				C \left( 1 + | \log ( T ) | \right) \left( 1+ \left| \log \left( u \right) \right| \right) u & \text{ if } \gamma = 1 , \\
				C T^{\gamma - 1} u & \text{ if } \gamma > 1 .
			\end{dcases}
	\end{equation}
\end{example}

We need the operators $\delta$ defined as follows:
	\begin{enumerate}
		\item $\delta: \mathcal{C}_1 \to \mathcal{C}_2$: for $I \colon [0,T] \to \mathbb{R}$, we define 
		$$\delta I \colon [0,T]^{2} \to \mathbb{R}, \qquad \delta I_{s, t} \coloneqq I_t - I_s.$$
		\item $\delta: \mathcal{C}_2 \to \mathcal{C}_3$: for $A \colon [0,T]^2 \to \mathbb{R}$, we define 
		$$\delta A \colon [0,T]^{3} \to \mathbb{R}, \qquad \delta A_{s, u, t} \coloneqq A_{s, t} - A_{s, u} - A_{u, t}.$$
	\end{enumerate}
It is easy to see that $\delta\circ\delta=0$, so that these operators form a cochain complex, which is
moreover \emph{exact}: if $\delta Z=0$ for $Z\in \mathcal{C}_2$, then $Z=\delta z$
for some $z\in \mathcal{C}_1$ (see \cite{MR2091358}).

We will work with dyadic numbers: for $m \in \mathbb{N}$, denote $D_m \coloneqq \left\lbrace k 2^{- m}, k \in \left\llbracket 0, 2^m \right\rrbracket \right\rbrace$, and $D \coloneqq \bigcup_{m \in \mathbb{N}} D_m$.
Our main technical result is the following.

\begin{theorem} \label{thm:weak_Sewing}
Let $A \colon \Delta_T^2 \to \mathbb{R}$ be a function and $V$ be a $k_0$-control function for some $k_0 \in \mathbb{N}$. 
Assume that for all $0 \leq s \leq u \leq t \leq T$,
		\begin{equation}
			\left| %A_{s, t} - A_{s, u} - A_{u, t}
			\left(\delta A \right)_{s, u, t} 
			\right| \leq V (t-s) .
		\end{equation}
Then:
	\begin{enumerate}[label = \textit{\arabic*.}, ref = \textit{\arabic*.}]
		\item\label{item:Sewing_on_D} (Sewing on dyadics) 			There exists $I \colon T D \to \mathbb{R}$ such that $I_0 = 0$ and for all $0 \leq s \leq t \leq T$ with $s/T, t/T \in D$, 	
		\begin{equation}\label{eq:Sewing_bound}
			\left| I_t - I_s - A_{s, t} \right| \leq \bar{V}_{(k_0)} (t-s) .
		\end{equation}
		
		\item\label{item:Sewing_on_0_1} (Sewing on $[0,T]$) Assume furthermore that $A$ is continuous and that $\bar{V}_{(k_0)} \left( u \right) \to_{u \to 0} 0$. 
		Let $W$ be a continous function such that $\bar{V}_{(k_0)} \leq W$.
		Then there exists $I \colon [0,T] \to \mathbb{R}$ such that $I_0 = 0$ and for all $0 \leq s \leq t \leq T$, 	
		\begin{equation}
			\left| I_t - I_s - A_{s, t} \right| \leq W (t-s) .
		\end{equation}
	\end{enumerate}
\end{theorem}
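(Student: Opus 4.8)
We may assume $T=1$: rescaling time reduces the general case to this one, $\bar V_{(k_0)}$ transforming accordingly. The plan is to build $I$ explicitly on the dyadics $D$ by a recursion over the scales, establish the defect bound \eqref{eq:Sewing_bound} there, and then — under the continuity hypothesis — pass to a continuous function on $[0,1]$ by density together with a limiting argument.

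For the construction on $D$: set $I_0 \coloneqq 0$, $I_1 \coloneqq A_{0,1}$, and define $I$ on $D_{n+1}$ from its values on $D_n$ by placing each new midpoint $v$ of a generation-$n$ interval $[a,b]$ at $I_v \coloneqq I_a + A_{a,v} + \tfrac12\bigl(I_b - I_a - A_{a,b}\bigr)$. Writing $R_{s,t} \coloneqq I_t - I_s - A_{s,t}$ for the defect, this choice is dictated by the cocycle identity $R_{s,t} = R_{s,u} + R_{u,t} - (\delta A)_{s,u,t}$ (valid since $\delta R = \delta\delta I - \delta A = -\delta A$): it splits the parent defect as $R_{a,v} = \tfrac12 R_{a,b}$ and $R_{v,b} = \tfrac12 R_{a,b} + (\delta A)_{a,v,b}$, so the largest defect over generation-$(n+1)$ intervals is at most half of that over generation-$n$ intervals plus $V(2^{-n})$. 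Iterating from $R_{0,1}=0$, a single dyadic interval $[a,b]$ of generation $n$ therefore satisfies $|R_{a,b}| \le \sum_{j=0}^{n-1} 2^{\,j-n+1} V(2^{-j})$, a geometric series in $V$ which is in particular $\le \bar V_{(k_0)}(2^{-n})$.

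For an arbitrary pair $s \le t$ in $D$ with $2^{-(r+1)} < t-s \le 2^{-r}$, I would bound $R_{s,t}$ by iterating the cocycle identity along a decomposition of $[s,t]$ chosen so that the resulting series matches \autoref{def:weak_control_function} exactly: at the $m$-th step carve out of each current piece a central block — a union of a bounded number of dyadic intervals with endpoints in $D_{(\text{current generation})+k_0}$, hence of generation $\ge r+mk_0$ — flanked by two thin remainders of length $< 2^{-(r+mk_0)}$, and recurse on the remainders. Bounding each central block by the single-interval estimate above, collecting the $(\delta A)$ terms produced by the carvings, and summing over the at most $2^m$ pieces present at step $m$ then reproduces the two sums defining $\bar V_{(k_0)}(t-s)$ — the $(\delta A)$ contributions assembling into the first sum, the central-block defects into the second. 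Finiteness of these series is precisely the requirement that $V$ be a $k_0$-control function (for $V(u)=u^\gamma$ it amounts to $k_0 > 1/\gamma$, making $2^m V(2^{-mk_0})$ summable; see \autoref{ex:usual_control_function}), and the logarithm at $\gamma=1$ appears from a factor $\sum_{l\le r} 1 \approx |\log 2^{-r}|$ in the central-block count. Pinning this accounting down — the exact exponents, multiplicities and ranges of summation that make the total equal $\bar V_{(k_0)}$ rather than some larger, less explicit series — is the heart of the argument, and the step I expect to be the main obstacle; linearity of $A \mapsto I$, by contrast, is clear at every stage.

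For the extension to $[0,1]$, assume $A$ continuous and $\bar V_{(k_0)}(u)\to 0$. Then $A_{t,t}=0$ (since $|A_{t,t}| = |(\delta A)_{t,t,t}| \le V(0)=0$) and $A$ is uniformly continuous on $\Delta_1^2$; combined with $|I_t - I_s| \le |A_{s,t}| + \bar V_{(k_0)}(t-s)$ from the first part, this makes $I$ uniformly continuous on $D$, hence it extends to a continuous function on $[0,1]=\overline{D}$, still with $I_0=0$. For arbitrary $0 \le s \le t \le 1$ choose dyadics $s_n \to s$, $t_n \to t$ with $s_n \le t_n$, so that $|I_{t_n} - I_{s_n} - A_{s_n,t_n}| \le \bar V_{(k_0)}(t_n-s_n) \le W(t_n-s_n)$; letting $n\to\infty$ and using continuity of $I$, $A$ and $W$ gives $|I_t - I_s - A_{s,t}| \le W(t-s)$. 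This last point also explains why the statement is phrased with a continuous majorant $W$: no continuity of $\bar V_{(k_0)}$ itself is available, nor is it needed.
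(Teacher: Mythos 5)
Your construction of $I$ on the dyadics is the paper's, word for word in substance: the midpoint rule $I_v = I_a + A_{a,v} + \tfrac12 R_{a,b}$ coincides with \eqref{eq:recursive_definition_of_I} once the sequence $u_{k,n}$ is unwound, the defect-halving identities $R_{a,v} = \tfrac12 R_{a,b}$, $R_{v,b} = \tfrac12 R_{a,b} + (\delta A)_{a,v,b}$ are exactly the recursion for $u$, the single-interval bound $|R_{a,b}| \le \sum_{j=0}^{n-1} 2^{\,j-n+1} V(2^{-j})$ reproduces \eqref{eq:bound_on_consecutive_dyadics}, and the density argument with the continuous majorant $W$ also matches. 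The gap is precisely where you flag it: the estimate for arbitrary dyadic pairs is sketched but not carried out, and the sketch as written does not clearly land on $\bar V_{(k_0)}$.

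The paper closes this with a two-stage iteration over $v_{r,M} \coloneqq \max\{\,|R_{s,t}| : s,t \in D_M,\ 0 \le t - s \le 2^{-r}\,\}$, always through the single three-point decomposition $R_{s,t} = R_{s,s_1} + R_{s_1,t_1} + R_{t_1,t} - (\delta A)_{s_1,t_1,t} - (\delta A)_{s,s_1,t}$ with $s_1, t_1$ the nearest points of a coarser grid inside $[s,t]$. An \emph{inner} iteration coarsens one generation at a time (take $s_1, t_1 \in D_{r+k-1}$) to get $v_{r,r+k} \le 2 v_{r+k,r+k} + v_{r,r+k-1} + 2V(2^{-r})$, hence \eqref{eq:first_estimate_on_v_r_r+K} and $v_{r,r+k_0} \le W_{(k_0)}(r)$; an \emph{outer} iteration then jumps by $k_0$ (take $s_1, t_1 \in D_{r+k_0}$) to get $v_{r,M} \le 2 v_{r+k_0,M} + W_{(k_0)}(r)$, which iterates to $\sum_{m \ge 0} 2^m W_{(k_0)}(r+mk_0) = \bar V_{(k_0)}(2^{-r})$. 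Your sketch captures the outer iteration (two thin remainders, recursion at scale $k_0$), but replaces the inner one by dyadically merging up to $2^{k_0}$ consecutive intervals of the central block and collecting the $\delta A$ costs of those merges. That is a genuinely different way of bounding $R_{s_1,t_1}$, and there is no a priori reason it produces $W_{(k_0)}(r)$ rather than a differently-shaped $V$-series. Since $\bar V_{(k_0)}$ in \autoref{def:weak_control_function} is reverse-engineered from the paper's iteration, proving \eqref{eq:Sewing_bound} as stated requires reproducing that iteration (or else redefining $\bar V_{(k_0)}$ to match your decomposition and re-verifying \autoref{ex:usual_control_function}); this accounting is the concrete step your proposal leaves open.
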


\begin{proof}
We prove the items in the announced order. 
Without loss of generality we can suppose that $T=1$: if $T \neq 1$, consider $\tilde{A} \colon \Delta_1^2 \to \mathbb{R}$ defined by $\tilde{A}_{s, t} \coloneqq A_{sT, tT}$.

\smallskip
{\it Proof of \ref{item:Sewing_on_D}}
First we introduce a sequence $u_{k, n}$ defined for $n \in \mathbb{N}$ and $k \in \left\llbracket 0, 2^n - 1 \right\rrbracket$ by the recurrence relations:
			\begin{equation}
				\begin{dcases}
					u_{0, 0} & \coloneqq 0 , \\
					u_{2 k, n + 1} & \coloneqq \frac{1}{2} u_{k, n} , \\
					u_{2 k + 1, n + 1} & \coloneqq \frac{1}{2} u_{k, n} + ( \delta A )_{k 2^{- n}, \left( k + \frac{1}{2} \right) 2^{- n}, \left( k + 1 \right) 2^{- n}} .
				\end{dcases}
			\end{equation}
		Now note that in order to define recursively $I$ on $D$, it is enough for each $n \in \mathbb{N}$ to define $I$ on elements of $D_n$ of the form $k 2^{-n}$ where $k = 2 l + 1$ is odd.
		Indeed, when $k$ is even, $k 2^{-n}$ belongs to $D_{n - 1}$.
		Thus, we set $I_0 \coloneqq 0$ and then recursively:
			\begin{equation}
				I_{\left( 2 l + 1 \right) 2^{-n}} \coloneqq I_{2 l 2^{- n}} + A_{ 2l 2^{- n}, \left( 2 l + 1 \right) 2^{-n}} + u_{2l, n} .
				\label{eq:recursive_definition_of_I}
			\end{equation}		
		(Notice that for each dyadic number $s \in D$, $I_s$ is actually defined as a finite linear combination of the $A_{u, v}$ and there is no question of convergence here. Note also that this definition is linear in $A$.)
		We claim that $R_{s, t} \coloneqq I_t - I_s - A_{s, t}$ can be expressed on consecutive elements of $D_n$ by:
			\begin{equation}
				R_{k 2^{-n}, \left( k + 1 \right)2^{-n}} = u_{k, n} .
				\label{eq:R_is_u_on_consecutive_dyadics}
			\end{equation}
		This is not clear a priori as \eqref{eq:recursive_definition_of_I} only establishes this property when $k = 2 l$ is even. However, when $k = 2 l + 1$ is odd, we argue by recurrence on $n$, and this is where we exploit the definition of $u$. Indeed, by definition of $R$, it holds that $\delta R = - \delta A$ so that:
			\begin{align}
				R_{\left( 2 l + 1 \right) 2^{-n}, \left( 2 l + 2 \right) 2^{- n}} &= R_{\left( 2 l \right) 2^{-n}, \left( 2 l + 2 \right) 2^{- n}} - R_{\left( 2 l \right) 2^{-n}, \left( 2 l + 1 \right) 2^{- n}} \\
				& \quad + ( \delta A )_{2 l 2^{- n}, \left( 2 l + 1 \right) 2^{- n}, \left( 2 l + 2 \right) 2^{- n}} .
			\end{align}
		
		By recurrence on $n$, $R_{\left( 2 l \right) 2^{-n}, \left( 2 l + 2 \right) 2^{- n}} = u_{l, n - 1}$.
		By \eqref{eq:recursive_definition_of_I}, we have $R_{\left( 2 l \right) 2^{-n}, \left( 2 l + 1 \right) 2^{- n}} = u_{2 l, n}$.
		By definition of $u$, $u_{2 l, n} = \frac{1}{2} u_{l, n - 1}$, thus:
			\begin{equation}
				R_{\left( 2 l + 1 \right) 2^{-n}, \left( 2 l + 2 \right) 2^{- n}} = \frac{1}{2} u_{l, n - 1} + ( \delta A )_{2 l 2^{- n}, \left( 2 l + 1 \right) 2^{- n}, \left( 2 l + 2 \right) 2^{- n}} ,
			\end{equation}
and by definition of $u$ this in turn equals $u_{2 l + 1, n}$, which establishes \eqref{eq:R_is_u_on_consecutive_dyadics}.
		
		Now let us turn to the Sewing bound \eqref{eq:Sewing_bound}.
		For $r, M \in \mathbb{N}$, set:
			\begin{equation}
				v_{r, M} \coloneqq \max\limits_{t, s \in D_M , 0 \leq t - s \leq 2^{- r}} \left| R_{s, t} \right| .
			\end{equation}		
		Note that when $r > M$, no distinct values $t, s \in D_M$ satisfy $0 \leq t - s \leq 2^{- r}$, so that $v_{r, M} = 0$.
		When $r = M$, the only elements $t, s \in D_r$ satisfying $0 \leq t - s \leq 2^{- r}$ are consecutive, i.e.\ of the form $s = k 2^{-r}$, $t = \left( k + 1 \right) 2^{- r}$ so that from \eqref{eq:R_is_u_on_consecutive_dyadics}, 
			\begin{equation}
				v_{r, r} = \max\limits_{k \in \left\llbracket 0, 2^r - 1 \right\rrbracket} \left| u_{k, r} \right| .
			\end{equation}
		According to the recursive definition of $u$, and the hypothesis on $A$,
			\begin{equation}
				v_{r, r} \leq \frac{1}{2} v_{r - 1, r - 1} + V \left( 2^{- \left( r - 1 \right)} \right) .
			\end{equation}
		Iterating and since $v_{0, 0} = 0$, we get:
			\begin{equation}
				v_{r, r} \leq \sum\limits_{l = 1}^r 2^{- \left( r - l \right)} V \left( 2^{- \left( l - 1 \right)} \right) .
				\label{eq:bound_on_consecutive_dyadics}
			\end{equation}			
	Now we establish a first (recursive) estimate of $v_{r, r + k}$ for $r, k \in \mathbb{N}$.
	Let $t, s \in D_{r + k}$ be such that $0 \leq t - s \leq 2^{- r}$.
	Note that if $t = s$ then $R_{s, t} = 0$, so that now we assume $t - s > 0$.
	Remembering that $\delta R = - \delta A$, we decompose:
		\begin{equation}
			R_{s, t} = R_{s, s_1} + R_{s_1, t_1} + R_{t_1, t} - ( \delta A )_{s_1, t_1, t} - ( \delta A )_{s, s_1, t} .
			\label{eq:decomposition_of_R}
		\end{equation}
where $s_1 \coloneqq \min \left( u \in D_{r + k - 1}, u \geq s \right)$, $t_1 \coloneqq \max \left( u \in D_{r + k - 1}, u \leq t \right)$.
	Note that $s_1, t_1$ are correctly defined, that $s_1, t_1 \in D_{r + k - 1} \subset D_{r + k}$, $s \leq s_1$, $t_1 \leq t$, and that $s_1 - s \leq 2^{- \left( r + k \right)}$, $t - t_1 \leq 2^{- \left( r + k \right)}$.
	Furthermore, since $t - s > 0$ and $t, s \in D_{r + k}$, it holds that $t - s \geq 2^{- \left( r + k \right)}$ and thus $D_{r + k - 1} \cap \left[ s, t \right] \neq \emptyset$, whence $s_1 \leq t_1$.
	Thus from \eqref{eq:decomposition_of_R} and the definition of $v$:
		\begin{equation}
			v_{r, r + k} \leq 2 v_{r + k, r + k} + v_{r, r + k - 1} + 2 V \left( 2^{- r} \right).
		\end{equation}	
	Recalling \eqref{eq:bound_on_consecutive_dyadics}, this yields:
		\begin{equation}
			v_{r, r + k} - v_{r, r + k - 1} \leq 2 \left( V \left( 2^{- r} \right) + \sum\limits_{l = 1}^{r + k} 2^{- \left( r + k - l \right)} V \left( 2^{- \left( l - 1 \right)} \right) \right) .
		\end{equation}	
	Summing from $k = 1$ to $K$ and reusing \eqref{eq:bound_on_consecutive_dyadics}, we obtain for $r, k \in \mathbb{N}$:
		\begin{equation} \label{eq:first_estimate_on_v_r_r+K}
			v_{r, r + K} \leq 2 K V \left( 2^{- r} \right) + 2 \sum\limits_{k = 0}^{K} \sum\limits_{l = 1}^{r + k} 2^{- \left( r + k - l \right)} V \left( 2^{- \left( l - 1 \right)} \right) .
		\end{equation}	
	Now fix $k_0 \in \mathbb{N}^{*}$, and let $r, M \in \mathbb{N}$ with $r \leq M$.
	Let $0 \leq t - s \leq 2^{- r}$ with $s, t \in D_{M}$.
	We consider several cases.
	
	If $r + k_0 \geq M$, then using \eqref{eq:first_estimate_on_v_r_r+K}:
		\begin{equation} \label{eq:estimate_of_v_case_1}
			v_{r, M} \leq 2 k_0 V \left( 2^{- r} \right) + 2 \sum\limits_{k = 0}^{k_0} \sum\limits_{l = 1}^{r + k} 2^{- \left( r + k - l \right)} V \left( 2^{- \left( l - 1 \right)} \right) .
		\end{equation}
	
	If $r + k_0 < M$ and $0 \leq t - s \leq 2^{- \left( r + k_0 \right)}$ then by definition of $v$:
		\begin{equation} \label{eq:estimate_of_v_case_2}
			\left| R_{s, t} \right| \leq v_{r + k_0, M} .
		\end{equation}
			
	If $r + k_0 < M$ and $2^{- \left( r + k_0 \right)} < t - s \leq 2^{- r}$, then we consider 
	\[
	s_1 \coloneqq \min \left( u \in D_{r + k_0}, u \geq s \right), 
	\quad t_1 \coloneqq \max \left( u \in D_{r + k_0}, u \leq t \right).
	\]
	Observe that $s_1, t_1$ are correctly defined, $s \leq s_1$, $t_1 \leq t$, $s_1, t_1 \in D_{r + k_0} \subset D_{M}$.
	Also, since $t - s > 2^{- \left( r + k_0 \right)}$, it holds that $D_{r + k_0} \cap \left[ s, t \right] \neq \emptyset$, whence $s_1 \leq t_1$.
	Finally, the definition of $s_1, t_1$ implies $s_1 - s \leq 2^{- \left( r + k_0 \right)}$, $t - t_1 \leq 2^{- \left( r + k_0 \right)}$.
	Thus from \eqref{eq:decomposition_of_R}:
		\begin{equation} \label{eq:estimate_of_v_case_3}
			\left| R_{s, t} \right| \leq 2 v_{r + k_0, M} + v_{r, r + k_0} + 2 V \left( 2^{- r} \right) .
		\end{equation}
	We denote:
		\begin{equation}
			W_{(k_0)} \left( r \right) \coloneqq 2 \left( k_0 + 1 \right) V \left( 2^{- r} \right) + 2 \sum\limits_{k = 0}^{k_0} \sum\limits_{l = 1}^{r + k} 2^{- \left( r + k - l \right)} V \left( 2^{- \left( l - 1 \right)} \right) ,
		\end{equation}			
so that combining \eqref{eq:estimate_of_v_case_1}, \eqref{eq:estimate_of_v_case_2}, \eqref{eq:estimate_of_v_case_3}, and \eqref{eq:first_estimate_on_v_r_r+K} gives for $r \leq M \in \mathbb{N}$:
		\begin{equation}
			v_{r, M} \leq 
				\begin{dcases}
					2 v_{r + k_0, M} + W_{(k_0)} \left( r \right) & \text{ if } r + k_0 < M , \\
					 W_{(k_0)} \left( r \right) & \text{ if } r + k_0 \geq M .
				\end{dcases}
		\end{equation}		
		Iterating this recursive estimate, we obtain:
			\begin{equation}
				v_{r, M} \leq \sum\limits_{m = 0}^{+ \infty} 2^m W_{(k_0)} \left( r + m k_0 \right) .
			\end{equation}					
		Note that from \autoref{def:weak_control_function}, the right-hand term equals $\bar{V}_{(k_0)} \left( 2^{- r} \right)$, and thus this implies:
			\begin{equation}
				\sup\limits_{t, s \in D , 2^{- \left( r + 1 \right)} < t - s \leq 2^{- r}} \left| R_{s, t} \right| \leq \bar{V}_{(k_0)} \left( 2^{- r} \right) .
			\end{equation}
		Recall that by definition, $\bar{V}_{(k_0)} \left( 2^{- r} \right) = \bar{V}_{(k_0)} (t-s)$ when $2^{- \left( r + 1 \right)} < t - s \leq 2^{- r}$.
		This is enough to conclude that for all $s, t \in D$ with $t \neq s$, $\left| R_{s, t} \right| \leq \bar{V}_{(k_0)} (t-s)$.
		When $t = s$, $| R_{t, t} | = | A_{t, t} | = | - \left(\delta A \right)_{t, t, t} | \leq V (0) = 0 \leq \bar{V}_{(k_0)} (0)$, whence the announced result.
		(Note that this is the only time in the proof where we use the assumption that $V (0) = 0$.)

\smallskip
{\it Proof of \ref{item:Sewing_on_0_1}} We extend $I$ on $[0,T]$ by density, setting for $t \in [0,T]$:
			\begin{equation}
				I_t \coloneqq \lim\limits_{\substack{s \to t \\ s \in D}} I_{s} .
			\end{equation}
		This is correctly defined, because for any choice of $\left( s_n \right)_{n \in \mathbb{N}} \in D^{\mathbb{N}}$ with $s_n \to t$, the Sewing estimate:
			\begin{equation}
				\left| I_{s_n} - I_{s_m} \right| \leq \left| A_{s_n\wedge s_m , s_n\vee s_m } \right| + \bar{V}_{(k_0)} \left( \left| s_n - s_m \right| \right) ,
			\end{equation}
		 implies that the sequence $\left( I_{s_n} \right)_{n \in \mathbb{N}}$ is Cauchy.
		If $W$ is a continuous function such that $\bar{V}_{(k_0)} \leq W$, then $\left| I_t - I_s - A_{s, t} \right| \leq W (t-s)$ for all $s, t \in [0,T]$.
\end{proof}

\begin{remark}[Non-locality] 
Note that our construction of $I$ in the theorem above is non-local, in the sense that $I_t - I_s$ may depend on the value of $A$ outside of $[ s, t ]^2$.
As an example, observe from the definition \eqref{eq:recursive_definition_of_I} of $I$ that $I_{3/4} = \frac{1}{2} A_{0, 1/2} + \frac{1}{2} A_{0, 1} + A_{1/2, 3/4} - \frac{1}{2} A_{1/2, 1}$.
Since $I_0 = 0$, it follows that $I_{3/4} - I_{0}$ depends on the value of $A_{0, 1}$.
\end{remark}

	\section{Continuity of the Sewing map}
\label{subsection:gubinelli_formulation}

The Sewing Lemma for $\gamma>1$ given in \autoref{thm:usual_Sewing_lemma} finds its main applications in the
theory of rough integration and rough differential equations \cite{MR2091358}. In this setting it is useful to 
introduce some function spaces and interpret the Sewing Lemma in terms of operators having nice properties on these spaces.
Then we introduce for all $\gamma>0$, $A\in \mathcal{C}(\Delta_T^2)$ and $B\in \mathcal{C}(\Delta_T^3)$ 
\[
\|A\|_{\mathcal{C}_2^{\gamma} ( \Delta_T^2 )}\coloneqq \sup\limits_{0\leq s< t\leq T} \frac{\left| A_{s, t} \right|}{| t - s |^{\gamma}}, \qquad \|B\|_{\mathcal{C}_3^{\gamma} ( \Delta_T^3 )}:=\sup\limits_{0\leq s <u< t \le T} \frac{\left| B_{s, u, t} \right|}{| t - s |^{\gamma}},
\]
with the associated normed spaces 
\begin{align}
\mathcal{C}_2^{\gamma}( \Delta_T^2 )&\coloneqq\{A\in \mathcal{C}(\Delta_T^2): \|A\|_{\mathcal{C}_2^{\gamma}( \Delta_T^2 )}<+\infty\}, \\
\mathcal{C}_3^{\gamma}( \Delta_T^3 )&\coloneqq\{B\in \mathcal{C}(\Delta_T^3): \|B\|_{\mathcal{C}_3^{\gamma}( \Delta_T^3 )}<+\infty\}.
\end{align}
Then we can reformulate the Sewing Lemma for $\gamma>1$ with a quantitative estimate as follows
\begin{theorem}[{Sewing map for $\gamma>1$, see \cite{MR2091358,MR2261056}}]
\label{thm:usual_Sewing_lemma2}
Let $\gamma > 1$ and $A\in \mathcal{C}(\Delta_T^2)$ satisfy
$\delta A\in \mathcal{C}_3^{\gamma}( \Delta_T^3 )$. Then there exists a unique $R\in \mathcal{C}_2^{\gamma}( \Delta_T^2 )$ such that
$\delta R=\delta A$. Moreover we have the estimate
	\begin{equation}\label{estimate>1}
		\| R \|_{\mathcal{C}_2^{\gamma}( \Delta_T^2 )} \leq C_\gamma \| \delta A \|_{\mathcal{C}_3^{\gamma}( \Delta_T^3 )}
	\end{equation}
	with $C_\gamma=(2^\gamma-2)^{-1}$.
\end{theorem}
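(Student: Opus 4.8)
\emph{Strategy.} The plan is to obtain existence from the classical sewing construction already recalled in \autoref{thm:usual_Sewing_lemma}, derive uniqueness in $\mathcal{C}_2^\gamma ( \Delta_T^2 )$ from the rigidity of $\gamma$-H\"older functions when $\gamma > 1$, and then read off \eqref{estimate>1} from one self-referential estimate obtained by splitting at the midpoint. For existence: the hypothesis $\delta A \in \mathcal{C}_3^\gamma ( \Delta_T^3 )$ gives in particular $| (\delta A)_{s, u, t} | \lesssim | t - s |^\gamma$, so \autoref{thm:usual_Sewing_lemma} produces $I \colon [0,T] \to \R$ with $I_0 = 0$ and $| I_t - I_s - A_{s, t} | \lesssim | t - s |^\gamma$; equivalently $R \coloneqq A - \delta I$ lies in $\mathcal{C}_2^\gamma ( \Delta_T^2 )$ and, since $\delta \circ \delta = 0$, satisfies $\delta R = \delta A$. (For a self-contained proof one may instead build $R$ directly via the midpoint refinement $(\mu B)_{s, t} \coloneqq B_{s, (s+t)/2} + B_{(s+t)/2, t}$: one computes $(\mu^{n+1} A - \mu^n A)_{s, t} = - \sum_{i=0}^{2^n - 1} (\delta A)_{a_i, (a_i + a_{i+1})/2, a_{i+1}}$ with $a_i \coloneqq s + i 2^{-n}(t-s)$, so that $| (\mu^{n+1} A - \mu^n A)_{s, t} | \le 2^{(1-\gamma) n} \| \delta A \|_{\mathcal{C}_3^\gamma ( \Delta_T^3 )} | t - s |^\gamma$; since $\gamma > 1$ the partial sums converge uniformly on $\Delta_T^2$ to a continuous, additive function $\mathcal{I}$, additivity holding first at dyadic subdivision points of $[s,t]$ and then everywhere by continuity, so $\mathcal{I} = \delta I$ with $I_t \coloneqq \mathcal{I}_{0, t}$ and $R \coloneqq A - \mathcal{I}$ works.)

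\emph{Uniqueness.} If $R, R' \in \mathcal{C}_2^\gamma ( \Delta_T^2 )$ both satisfy $\delta R = \delta R' = \delta A$, then $\delta (R - R') = 0$, so $g_t \coloneqq (R - R')_{0, t}$ satisfies $(R - R')_{s, t} = g_t - g_s$, whence $| g_t - g_s | \le ( \| R \|_{\mathcal{C}_2^\gamma ( \Delta_T^2 )} + \| R' \|_{\mathcal{C}_2^\gamma ( \Delta_T^2 )} ) | t - s |^\gamma$. Cutting $[s,t]$ into $N$ equal pieces and summing, $| g_t - g_s | \lesssim N^{1-\gamma} | t - s |^\gamma \to 0$ as $N \to \infty$ because $\gamma > 1$; hence $g$ is constant and $R = R'$.

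\emph{The estimate.} From $\delta R = \delta A$ we get $\delta(A - R) = 0$, i.e.\ $A - R$ is additive, which rearranges to $R_{s, t} = R_{s, u} + R_{u, t} + (\delta A)_{s, u, t}$ for all $0 \le s \le u \le t \le T$. Taking $u = (s+t)/2$ and bounding the three terms,
\[
| R_{s, t} | \le 2 \cdot 2^{-\gamma} \, \| R \|_{\mathcal{C}_2^\gamma ( \Delta_T^2 )} \, | t - s |^\gamma + \| \delta A \|_{\mathcal{C}_3^\gamma ( \Delta_T^3 )} \, | t - s |^\gamma .
\]
Dividing by $| t - s |^\gamma$ and taking the supremum over $0 \le s < t \le T$ yields $\| R \|_{\mathcal{C}_2^\gamma ( \Delta_T^2 )} \le 2^{1-\gamma} \| R \|_{\mathcal{C}_2^\gamma ( \Delta_T^2 )} + \| \delta A \|_{\mathcal{C}_3^\gamma ( \Delta_T^3 )}$; since $\gamma > 1$ one has $2^{1-\gamma} < 1$ and, by the existence step, $\| R \|_{\mathcal{C}_2^\gamma ( \Delta_T^2 )} < \infty$, so this rearranges to $\| R \|_{\mathcal{C}_2^\gamma ( \Delta_T^2 )} \le ( 1 - 2^{1-\gamma} )^{-1} \| \delta A \|_{\mathcal{C}_3^\gamma ( \Delta_T^3 )}$, which is the estimate \eqref{estimate>1}.

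\emph{Main point.} There is no deep obstacle --- this is the classical sewing estimate --- but the ingredient worth isolating is the rigidity behind uniqueness, namely that a $\gamma$-H\"older function with $\gamma > 1$ must be constant; this is exactly what breaks down in the regime $0 < \gamma \le 1$ that occupies the rest of the paper, forcing one there to give up both uniqueness and the Riemann-sum formula. In the self-contained version of existence, the only genuinely technical step is checking that the uniform limit $\mathcal{I}$ of the dyadic sums is truly additive --- so that it is the increment of an honest path --- which is where continuity of $A$ enters.
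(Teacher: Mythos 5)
Your main line of argument --- existence via \autoref{thm:usual_Sewing_lemma}, uniqueness from the fact that a $\gamma$-H\"older increment with $\gamma>1$ is constant, and the estimate via a midpoint bootstrap --- is correct and is the standard proof; the paper itself gives no proof and simply cites \cite{MR2091358,MR2261056}. However, your final sentence claims the bootstrap yields the stated constant, and it does not. From $\|R\|_{\mathcal{C}_2^\gamma(\Delta_T^2)}\le 2^{1-\gamma}\|R\|_{\mathcal{C}_2^\gamma(\Delta_T^2)}+\|\delta A\|_{\mathcal{C}_3^\gamma(\Delta_T^3)}$ you obtain $\|R\|_{\mathcal{C}_2^\gamma(\Delta_T^2)}\le (1-2^{1-\gamma})^{-1}\|\delta A\|_{\mathcal{C}_3^\gamma(\Delta_T^3)} = \frac{2^\gamma}{2^\gamma-2}\|\delta A\|_{\mathcal{C}_3^\gamma(\Delta_T^3)}$, which exceeds the stated $C_\gamma=(2^\gamma-2)^{-1}$ by a factor $2^\gamma$. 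With the paper's convention $\|B\|_{\mathcal{C}_3^\gamma(\Delta_T^3)}=\sup_{s<u<t}|B_{s,u,t}|/|t-s|^\gamma$ the midpoint (or dyadic-refinement) argument genuinely gives $\frac{2^\gamma}{2^\gamma-2}$; the sharper $(2^\gamma-2)^{-1}$ is what one gets when the triangle seminorm divides by $(|t-u|\vee|u-s|)^\gamma$ instead of $|t-s|^\gamma$, which is precisely the convention the paper adopts for the un-decorated $\mathcal{C}_3^\gamma$ in the ``unordered times'' subsection. So the discrepancy is real and should be flagged rather than asserted away.

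A secondary, more peripheral point concerns your parenthetical ``self-contained'' existence construction. The claim that $\mathcal{I}:=\lim_n \mu^n A$ is additive ``first at dyadic subdivision points of $[s,t]$ and then everywhere by continuity'' is not quite right as stated: if $u=s+j2^{-k}(t-s)$ with $j$ odd and $j\neq 1$, the level-$n$ dyadic mesh of $[s,t]$ restricted to $[u,t]$ is not a dyadic mesh of $[u,t]$ at any level, so the identity $\mu^n A_{s,t}=\mu^{m}A_{s,u}+\mu^{m'}A_{u,t}$ has no $n$-uniform meaning there. Additivity at the midpoint is immediate ($\mu^n A_{s,t}=\mu^{n-1}A_{s,m}+\mu^{n-1}A_{m,t}$, $m=(s+t)/2$), but promoting this to full additivity requires an extra step --- e.g.\ showing Riemann sums converge along arbitrary partitions as in \cite{MR2091358,MR2261056}, or arguing via the cocycle identity for $\delta\mathcal{I}$ together with the H\"older bound on $R$. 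Since your primary argument just invokes \autoref{thm:usual_Sewing_lemma}, this does not invalidate the proof, but the aside should not be presented as complete.
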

The two statements in \autoref{thm:usual_Sewing_lemma} and \autoref{thm:usual_Sewing_lemma2} are essentially equivalent 
if we establish the correspondence $(A,R)\leftrightarrow(I,R)$ in such a way that
\[
I:[0,T]\to\mathbb{R}, \qquad I_0=0, \qquad \delta I=A-R.
\]
%We note that, since $\delta (A-R)=0$, there exists necessarily $I:[0,T]\to\mathbb{R}$ such that $\delta I=A-R$.
The uniqueness statement in \autoref{thm:usual_Sewing_lemma2} implies that if $A\in \mathcal{C}(\Delta_T^2)$ satisfies $\delta A=0$, then the corresponding $R$ is also equal to zero. We obtain that in \autoref{thm:usual_Sewing_lemma2} $R$ is in fact a function of $\delta A$ only. In other words, \autoref{thm:usual_Sewing_lemma2} allows to define the 
\emph{Sewing map} 
\[
\Lambda:\delta(\mathcal{C}(\Delta_T^2))\cap\mathcal{C}_3^{\gamma}( \Delta_T^3 )\to \mathcal{C}_2^{\gamma}( \Delta_T^2 ),
\qquad B=\delta A \mapsto R=\Lambda B,
\]
and the bound \eqref{estimate>1} yields the continuity property in the case $\gamma>1$
	\begin{equation}\label{estimateSewing>1}
		\| \Lambda B \|_{\mathcal{C}_2^{\gamma}( \Delta_T^2 )} \leq (2^\gamma-2)^{-1} \| B \|_{\mathcal{C}_3^{\gamma}( \Delta_T^3 )},
		\qquad \forall B\in \delta(\mathcal{C}(\Delta_T^2))\cap\mathcal{C}_3^{\gamma}( \Delta_T^2 ).
	\end{equation}
Since $\delta R=\delta A$, or equivalently since $\delta\circ\delta I=0$, we note that $\delta\Lambda=\text{Id}_{\delta(\mathcal{C}(\Delta_T^2))\cap\mathcal{C}_3^{\gamma}( \Delta_T^2 )}$, namely we can interpret $\Lambda$ as a right inverse of $\delta$. 

We now state the Sewing Lemma for $0<\gamma\leq 1$ with a quantitative estimate.
\begin{theorem}[{Sewing map for $0 < \gamma\leq1$}]
\label{thm:weak_Sewing2}
Let $0 < \gamma \leq 1$ and $A\in C(\Delta_T^2)$ with
$\delta A\in \mathcal{C}_3^{\gamma}( \Delta_T^3 )$. Then there exists $R\in \mathcal{C}(\Delta_T^2)$ such that $\delta R=\delta A$, the map $\delta A\mapsto R=:\Lambda(\delta A)$ is linear and for some constant $C_\gamma\geq 0$ 
we have the estimate
	\begin{alignat}{2}
		\| R \|_{\mathcal{C}_2^{\gamma}( \Delta_T^2 )} &\leq C_{\gamma} \| \delta A \|_{\mathcal{C}_3^{\gamma}( \Delta_T^3)} \quad && \text{if} \ \gamma<1,  \label{estimate<1} \\
	\sup\limits_{0 \leq s < t \leq T} \frac{|R_{s,t}|}{\left( 1+ \left| \log |t-s| \right| \right) |t-s|} & \leq C_1 \| \delta A \|_{\mathcal{C}_3^{1}( \Delta_T^3 )} \ && \text{if} \ \gamma=1.  \label{estimate=1} 
	\end{alignat}
One can actually bound the constants in \autoref{ex:usual_control_function} and observe that one can take:
	\begin{align*}
		C_\gamma &= \frac{2^{\gamma + 1}}{1 - 2^{1 - \gamma \left( \left\lfloor \frac{1}{ \gamma} \right\rfloor + 1 \right)}} \left( 2 + \left\lfloor \frac{1}{\gamma} \right\rfloor + \frac{2}{\left( 2^{1 - \gamma} - 1 \right) \left( 1 - 2^{- \gamma} \right)} \right) && \!\!\text{if} \ 0 <\gamma<1, \\ 
		C_1 &= \frac{96}{\log 2} \left( 1 + | \log  T  | \right) && \text{if} \ \gamma=1.
	\end{align*}
\end{theorem}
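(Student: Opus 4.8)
The plan is to derive \autoref{thm:weak_Sewing2} from the main technical result \autoref{thm:weak_Sewing}, applied to the explicit control function
\[
V(u) \coloneqq \|\delta A\|_{\mathcal{C}_3^\gamma(\Delta_T^3)}\, u^\gamma, \qquad k_0 \coloneqq \left\lfloor \tfrac{1}{\gamma}\right\rfloor + 1 .
\]
Since $k_0 > 1/\gamma$, \autoref{ex:usual_control_function} guarantees that $V$ is a $k_0$-control function and that $\bar V_{(k_0)}(u) \to 0$ as $u \to 0$; moreover the assumption $\delta A \in \mathcal{C}_3^\gamma(\Delta_T^3)$ says precisely that $|(\delta A)_{s,u,t}| \le V(t-s)$ for $0 \le s \le u \le t \le T$, which is the hypothesis of \autoref{thm:weak_Sewing}. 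Taking for $W$ the (continuous, vanishing at $0$) right-hand side of the bound in \autoref{ex:usual_control_function}, both parts of \autoref{thm:weak_Sewing} then produce a continuous $I\colon[0,T]\to\mathbb{R}$ with $I_0 = 0$, linear in $A$, and $|I_t - I_s - A_{s,t}| \le W(t-s)$ for all $0 \le s \le t \le T$. I would then set $R \coloneqq A - \delta I$, which lies in $\mathcal{C}(\Delta_T^2)$ and satisfies $\delta R = \delta A - \delta\delta I = \delta A$ together with $|R_{s,t}| = |I_t - I_s - A_{s,t}| \le W(t-s)$; substituting the shape of $W$ from \autoref{ex:usual_control_function} then gives the weighted bounds \eqref{estimate<1} and \eqref{estimate=1} with constant a multiple of $\|\delta A\|_{\mathcal{C}_3^\gamma}$.

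Next I would argue that $\Lambda\colon \delta A \mapsto R$ is a well-defined linear map, i.e.\ that $R$ depends on $\delta A$ alone. For this I would use the exactness of the complex $(\mathcal{C}_\bullet,\delta)$ noted above: if $\delta A = \delta A'$ then $A - A' = \delta g$ for some $g \in \mathcal{C}_1$; since the auxiliary sequence $u_{k,n}$ in the proof of \autoref{thm:weak_Sewing} depends only on $\delta A$, an induction on $n$ using the recursion \eqref{eq:recursive_definition_of_I} gives $I_t - I'_t = g_t - g_0$ for every dyadic $t$, hence for every $t \in [0,T]$ by density, so $R = A - \delta I = A' - \delta I' = R'$. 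Linearity of $\Lambda$ then follows from linearity of $A \mapsto I$, and as in \autoref{thm:usual_Sewing_lemma2} the identity $\delta R = \delta A$ exhibits $\Lambda$ as a right inverse of $\delta$.

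The only genuinely computational step, and the one I expect to be the main obstacle (in bookkeeping, not in ideas), is upgrading the qualitative bound of \autoref{ex:usual_control_function} to the stated explicit constants. Concretely, with $V(u) = u^\gamma$ one inserts $V$ into the two sums of \autoref{def:weak_control_function}: the first collapses to $2(k_0+1)\,2^{-\gamma r}\sum_{m\ge 0}2^{m(1-\gamma k_0)}$, and the second to a nested geometric sum over $m,k,l$ with ratios $2^{1-\gamma k_0}$, $2^{1-\gamma}$ and $2^{-\gamma}$. For $\gamma < 1$ all three series converge (because $k_0 > 1/\gamma$), and adding them — together with the elementary conversion $T2^{-r} \le 2\,(t-s)$ when $T2^{-(r+1)} < t-s \le T2^{-r}$, which contributes a factor $2^\gamma$ — produces exactly
\[
C_\gamma = \frac{2^{\gamma+1}}{1 - 2^{1 - \gamma(\lfloor 1/\gamma\rfloor + 1)}}\left(2 + \left\lfloor\tfrac{1}{\gamma}\right\rfloor + \frac{2}{(2^{1-\gamma}-1)(1-2^{-\gamma})}\right).
\]
For $\gamma = 1$ the innermost sum $\sum_{l=1}^{N}2^{l(1-\gamma)} = N$ grows linearly in $N$, so after the rescaling to horizon $T$ (which replaces the index $r$ by a quantity of order $|\log|t-s|| + |\log T|$) one obtains a bound of the form $C(1+|\log T|)(1+|\log|t-s||)\,|t-s|\,\|\delta A\|_{\mathcal{C}_3^1}$; tracking the numerical factors ($k_0 + 1 = 3$, the sums $\sum_m 2^{-m}$ and $\sum_m m 2^{-m}$, and the $1/\log 2$ from the change of base) and absorbing lower-order terms via $\tfrac{|\log|t-s||}{1+|\log|t-s||}\le 1$ gives $C_1 = \frac{96}{\log 2}(1+|\log T|)$.
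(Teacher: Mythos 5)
Your proposal is correct and follows essentially the same route the paper takes: apply \autoref{thm:weak_Sewing} with $V(u) = \|\delta A\|_{\mathcal{C}_3^\gamma}\,u^\gamma$ and $k_0 = \lfloor 1/\gamma\rfloor + 1$, invoke \autoref{ex:usual_control_function} for the shape of $\bar V_{(k_0)}$, set $R = A - \delta I$, and observe that $R$ depends only on $\delta A$ because the auxiliary sequence $u_{k,n}$ does. The only cosmetic difference is that you deduce well-definedness of $\Lambda$ via exactness of the $\delta$-complex and an induction showing $I - I' = g - g_0$, whereas the paper points directly to \eqref{eq:R_is_u_on_consecutive_dyadics} and \eqref{eq:decomposition_of_R}; both rest on the same observation about $u_{k,n}$ and are equivalent.
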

We have therefore \emph{another} Sewing map for $\gamma< 1$
\[
\Lambda:\delta(\mathcal{C}(\Delta_T^2))\cap\mathcal{C}_3^{\gamma}( \Delta_T^3 )\to \mathcal{C}_2^{\gamma}( \Delta_T^2 ),
\qquad B=\delta A \mapsto R=\Lambda B.
\]
The fact that $R$ is in fact a function of $\delta A$ follows from the construction in \autoref{thm:weak_Sewing} of the sequence $u_{k,n}$ 
which depends only on $\delta A$, see \eqref{eq:R_is_u_on_consecutive_dyadics}, \eqref{eq:decomposition_of_R}.
Linearity of the map $\delta A\mapsto R$ follows from the construction in the proof of \autoref{thm:weak_Sewing}.

The estimate \eqref{estimate<1} shows that also for $\gamma<1$ the Sewing map can be constructed
as a linear continuous operator $\Lambda:\delta(\mathcal{C}(\Delta_T^2))\cap\mathcal{C}_3^{\gamma}( \Delta_T^3 )\to \mathcal{C}_2^{\gamma}( \Delta_T^2 )$. For $\gamma=1$ this is also true, but the norm $\mathcal{C}_2^{\gamma}$ has to be
slightly modified in accordance with \eqref{estimate=1}, but
 we refrain from introducing a notation for this.

We define the classical space of $\beta$-H\"older functions on $[0,T]$ for $\beta\in(0,1]$:
\[
\mathcal{C}_1^{\beta}:=\{f\in C([0,T]): \|\delta f\|_{\mathcal{C}_2^{\beta}}<+\infty\}.
\]
Then the choice of $R$ in \autoref{thm:weak_Sewing2} can not be unique for $\gamma\leq 1$, since $R+\delta f$ for any $f\in\mathcal{C}_1^{\gamma}$ also satisfies the desired properties.

\begin{remark}
Note that there are \emph{two different} Sewing maps: one for $\gamma>1$ and one for $\gamma\leq 1$. We denote both by $\Lambda$, since it is always clear from the context which of the two is used.
\end{remark}

\subsection{The integration map}\label{sec:integ}
We have seen above that the Sewing map allows to define an \emph{integration map} $A\mapsto I$, for all
$A\in \mathcal{C}(\Delta_T^2)$ such that $\delta A\in \mathcal{C}_3^{\gamma}( \Delta_T^3 )$, where $I\in \mathcal{C}_1$ is defined by
			\begin{equation}\label{I1}
					I_0 = 0, \qquad
					\delta I - A = -\Lambda(\delta A)\in \mathcal{C}_2^{\gamma}( \Delta_T^2 ).
			\end{equation}
We note this linear map by 
\begin{equation}\label{I2}
\cI:\{A\in \mathcal{C}(\Delta_T^2): \delta A\in\mathcal{C}_3^{\gamma}( \Delta_T^3 )\}\to \mathcal{C}_1, \qquad 
\cI(A):=I.
\end{equation}
A natural question in this context is under which conditions $I$ belongs to a space of $\beta$-H\"older functions.
\begin{prop}\label{prop:Ih}
Let $\beta\in(0,1)$
and $A\in \mathcal{C}(\Delta_T^2)$ such that $\delta A\in\mathcal{C}_3^{\beta\wedge\gamma}( \Delta_T^3 )$. Then $I:=\cI(A)\in\mathcal{C}_1^{\beta}$ if and only if $A\in\mathcal{C}_2^{\beta}( \Delta_T^2 )$, and in this case
\begin{equation}
\|\delta I\|_{\mathcal{C}_2^{\beta}( \Delta_T^2 )} \leq \|A\|_{\mathcal{C}_2^{\beta}( \Delta_T^2 )}+C_{\beta\wedge\gamma}\|\delta A\|_{\mathcal{C}_3^{\beta\wedge\gamma}( \Delta_T^3 )}.
\end{equation}
\end{prop}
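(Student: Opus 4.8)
The plan is to reduce the proposition to the defining identity of the integration map. By the definition \eqref{I1} of $\mathcal{I}$, the function $I := \mathcal{I}(A)$ is characterised by $I_0 = 0$ and $\delta I = A - R$ with $R := \Lambda(\delta A)$. Since $\delta A \in \mathcal{C}_3^{\beta\wedge\gamma}(\Delta_T^3)$ and $\beta < 1$ forces $\beta\wedge\gamma < 1$, \autoref{thm:weak_Sewing2} gives $R \in \mathcal{C}_2^{\beta\wedge\gamma}(\Delta_T^2)$ with $\|R\|_{\mathcal{C}_2^{\beta\wedge\gamma}} \leq C_{\beta\wedge\gamma}\|\delta A\|_{\mathcal{C}_3^{\beta\wedge\gamma}}$, and --- as recalled after that theorem --- $R$ is determined by $\delta A$ alone, independently of the Hölder exponent used to run the Sewing construction. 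Because $I_0 = 0$ and $I$ is continuous, the membership $I \in \mathcal{C}_1^\beta$ is by definition the same as $\delta I \in \mathcal{C}_2^\beta$, i.e., via $\delta I = A - R$, the same as $A - R \in \mathcal{C}_2^\beta$. So the statement reduces to the equivalence of $A - R \in \mathcal{C}_2^\beta$ with $A \in \mathcal{C}_2^\beta$, plus the quantitative bound.

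For the implication $A \in \mathcal{C}_2^\beta \Rightarrow I \in \mathcal{C}_1^\beta$, I would first record the elementary bound $\|\delta A\|_{\mathcal{C}_3^\beta} \leq 3\|A\|_{\mathcal{C}_2^\beta}$, obtained from $\delta A_{s,u,t} = A_{s,t} - A_{s,u} - A_{u,t}$ and $|u-s|, |t-u| \leq |t-s|$. Since $R$ depends only on $\delta A$ and $\beta < 1$, \autoref{thm:weak_Sewing2} applied with exponent $\beta$ then yields $R \in \mathcal{C}_2^\beta$ with $\|R\|_{\mathcal{C}_2^\beta} \leq C_\beta\|\delta A\|_{\mathcal{C}_3^\beta}$; in the regime $\beta \leq \gamma$ relevant to the applications, $C_\beta = C_{\beta\wedge\gamma}$ and $\|\delta A\|_{\mathcal{C}_3^\beta} = \|\delta A\|_{\mathcal{C}_3^{\beta\wedge\gamma}}$. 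Hence $\delta I = A - R \in \mathcal{C}_2^\beta$, so $I \in \mathcal{C}_1^\beta$, and the triangle inequality for $\|\cdot\|_{\mathcal{C}_2^\beta}$ gives $\|\delta I\|_{\mathcal{C}_2^\beta} \leq \|A\|_{\mathcal{C}_2^\beta} + \|R\|_{\mathcal{C}_2^\beta}$, i.e., the announced estimate. For the converse, assume $I \in \mathcal{C}_1^\beta$; then $A = \delta I + R$ with $\delta I \in \mathcal{C}_2^\beta$ and $R \in \mathcal{C}_2^{\beta\wedge\gamma}$, and since $\mathcal{C}_2^{\beta\wedge\gamma}$ coincides with $\mathcal{C}_2^\beta$ when $\gamma \geq \beta$, we obtain $A \in \mathcal{C}_2^\beta$.

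I expect the only delicate point to be the bookkeeping of Hölder exponents and constants: the Sewing map returns $R$ with exactly the regularity carried by $\delta A$, so one must invoke the Sewing estimate at the exponent matching the regularity of $A$ itself --- this is legitimate precisely because the constructed $R$ is insensitive to that choice --- and keep track of whether $C_\beta$ or $C_{\beta\wedge\gamma}$ appears. Everything else is a direct consequence of the identity $\delta I = A - R$ and the trivial comparison of Hölder seminorms on the bounded interval $[0,T]$; no construction or convergence argument beyond \autoref{thm:weak_Sewing} is involved.
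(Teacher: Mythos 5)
Your proof is correct and follows essentially the same route as the paper's: both arguments reduce via the identity $\delta I = A - \Lambda(\delta A)$ to the triangle inequality together with the Sewing estimate on $\Lambda(\delta A)$, the only cosmetic difference being whether the Sewing bound is invoked directly at exponent $\beta$ (via $\|\delta A\|_{\mathcal{C}_3^{\beta}} \leq 3\|A\|_{\mathcal{C}_2^{\beta}}$, as you do) or at $\beta\wedge\gamma$ followed by the seminorm comparison $\|\Lambda(\delta A)\|_{\mathcal{C}_2^{\beta}} \leq \|\Lambda(\delta A)\|_{\mathcal{C}_2^{\beta\wedge\gamma}}$ (as the paper does). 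You also make explicit the regime $\beta\wedge\gamma = \beta$ in which the displayed constant matches exactly, a restriction the paper's chain of inequalities uses implicitly.
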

\begin{proof}
Let $A\in \mathcal{C}(\Delta_T^2)$ with $\delta A\in \mathcal{C}_3^{\gamma}( \Delta_T^3 )$. Since $\delta I = A-\Lambda(\delta A)$ we obtain for $\beta\in(0,1)$:
\[
\begin{split}
\|\delta I\|_{\mathcal{C}_2^{\beta}( \Delta_T^2 )} & \leq \|A\|_{\mathcal{C}_2^{\beta}( \Delta_T^2 )}+\|\Lambda(\delta A)\|_{\mathcal{C}_2^{\beta}( \Delta_T^2 )}
\\ & \leq \|A\|_{\mathcal{C}_2^{\beta}( \Delta_T^2 )}+\|\Lambda(\delta A)\|_{\mathcal{C}_2^{\beta\wedge\gamma}( \Delta_T^2 )}
\\ & \leq \|A\|_{\mathcal{C}_2^{\beta}( \Delta_T^2 )}+C_{\beta\wedge\gamma}\|\delta A\|_{\mathcal{C}_3^{\beta\wedge\gamma}( \Delta_T^3 )}.
\end{split}
\]
On the other hand we have
\[
\begin{split}
\|A\|_{\mathcal{C}_2^{\beta}( \Delta_T^2 )} & \leq \|\delta I\|_{\mathcal{C}_2^{\beta}( \Delta_T^2 )}+\|\Lambda(\delta A)\|_{\mathcal{C}_2^{\beta}( \Delta_T^2 )}
\\ & \leq \|\delta I\|_{\mathcal{C}_2^{\beta}( \Delta_T^2 )}+\|\Lambda(\delta A)\|_{\mathcal{C}_2^{\beta\wedge\gamma}( \Delta_T^2 )}
\\ & \leq \|\delta I\|_{\mathcal{C}_2^{\beta}( \Delta_T^2 )}+C_{\beta\wedge\gamma}\|\delta A\|_{\mathcal{C}_3^{\beta\wedge\gamma}( \Delta_T^3 )}.
\end{split}
\]
The proof is complete.
\end{proof}
In the terminology of \cite{caravenna2020hairers}:
\begin{itemize}
\item $\delta A\in\mathcal{C}_3^{\beta\wedge\gamma}( \Delta_T^3 )$ is a \enquote{coherence} condition,
\item $A\in\mathcal{C}_2^{\beta}( \Delta_T^2 )$ is a \enquote{homogeneity} condition.
\end{itemize}
See \autoref{sec:link} below for further discussions.

\subsection{Optimality of the case \texorpdfstring{$\gamma=1$}{gamma=1}.}
The following example shows that the growth rate $| t - s | | \log | t - s | |$ from \autoref{thm:simple_formulation_of_weak_Sewing} in the case $\gamma = 1$ is optimal.
Let us set $A_{s,t}:=|t-s|\log|t-s|$. Then for $s\leq u\leq t$,
\[
\begin{split}
\delta A_{s,u,t} &=|t-s|\log|t-s|-|u-s|\log|u-s|-|t-u|\log|t-u| 
%\\ & = |t-u|\log\frac{|t-s|}{|t-u|}+|u-s|\log\frac{|t-s|}{|u-s|}
\\ & = |t-s|\left( p\log\frac1p+(1-p)\log\frac1{1-p}\right)\in[0,(\log 2)|t-s|] ,
\end{split}
\]
where $p=\frac{|t-u|}{|t-s|}\in[0,1]$. Therefore $\delta A\in\mathcal{C}^1_3( \Delta_T^3 )$. Let us suppose now that we can improve the bound \eqref{eq:Sewing_bound_in_simple_theorem} in the sense that there exists $I\in \mathcal{C}_1$ such that
$I_0=0$ and
\[
\lim_{\varepsilon\downarrow 0} \sup_{|t-s|\le\varepsilon} \frac
{|I_t-I_t-A_{s,t}|}{|A_{s,t}|}=0.
\]
Then there exists $\delta_n\to 0$ such that
\[
\sup_{|t-s|\le\frac1n} 
{|I_t-I_t-A_{s,t}|}\leq \delta_n \frac{\log n}{n}.
\]
Let us set $t=i/n$ and $s=(i-1)/n$. Then $A_{s,t}=-\log n/n$ and
\[
-(1+\delta_n)\frac{\log n}{n}\leq I_{\frac in}-I_{\frac{i-1}{n}}\leq -(1-\delta_n)\frac{\log n}{n},
\]
and summing over $i$
\[
-(1+\delta_n)(\log n)t\leq I_t\leq -(1-\delta_n)(\log n)t, \qquad t=\frac i{n}.
\]
Since $t=\frac{im}{{nm}}$ for all $m\geq 1$, we obtain
\[
-(1+\delta_{nm})(\log n+\log m)t\leq I_t\leq -(1-\delta_{nm})(\log n+\log m)t, \qquad t=\frac i{n},
\]
Letting $m\to+\infty$ we obtain $I_t=-\infty$, which is a contradiction.

\subsection{Unordered times}

For the applications to rough paths of \autoref{section:application_extension_theorem}, it is important to extend the Sewing Lemmas to
functions $A\colon[0,T]^2\to\R$ rather than $A \colon \Delta_T^2 \to \R$.

For this, we consider continuous functions $A:[0,T]^2\to\R$ such that for some $\gamma>0$
\begin{equation}
		\left| A_{s, t} - A_{s, u} - A_{u, t} \right| \lesssim \left( | t - u | \vee | u - s | \right)^{\gamma} , \qquad s, u, t \in [0,T].
	\end{equation}
This implies in particular 
\begin{equation}
		\left| A_{s, t} - A_{s, u} - A_{u, t} \right| \lesssim| t - s | ^{\gamma} , \qquad 0\leq s\leq  u\leq t\leq T.
	\end{equation}
Thus, $\delta A \in \mathcal{C}_3^{\gamma} (\Delta_T^{3})$, and in particular we can apply \autoref{thm:usual_Sewing_lemma2} and \autoref{thm:weak_Sewing2}, so that we can consider $\Lambda ( \delta A ) \in \mathcal{C}_2^{\gamma} \left( \Delta_{T}^{2} \right)$ and $\mathcal{I} \left( A \right) \in \mathcal{C}_1$.
However, this gives us information on $\Lambda ( \delta A )_{s, t}$ only when $s \leq t$.
Still, we can recover information on $\Lambda ( \delta A )_{s, t}$ when $s > t$ by writing from \eqref{I1}:
	\begin{equation}
		\Lambda( \delta A )_{s, t} + \Lambda( \delta A )_{t, s} = - \delta A_{s, t, s} ,
	\end{equation}
\noindent so that for $\gamma\ne 1$
	\begin{equation}
		\frac{\left| \Lambda ( \delta A )_{s, t} \right|}{| t - s |^{\gamma}} \leq \left\| \Lambda ( \delta A ) \right\|_{\mathcal{C}_2^{\gamma} \left( \Delta_T^{2} \right)}  + \left| \delta A_{s, t, s} \right| \le (C_\gamma+1)\|\delta A\|_{\mathcal{C}_3^{\gamma} \left( \Delta_T^{3} \right)} ,
	\end{equation}
	with $C_\gamma$ as in \eqref{estimate>1}-\eqref{estimate<1}, respectively.

Thus, if we introduce for all $\gamma>0$, $A\in \mathcal{C}_2$ and $B\in \mathcal{C}_3$ 
\begin{align}
	\|A\|_{\mathcal{C}_2^{\gamma}}&\coloneqq \sup\limits_{s,t\in[0,T], \ s\ne t} \frac{\left| A_{s, t} \right|}{| t - s |^{\gamma}}, \\
	\|B\|_{\mathcal{C}_3^{\gamma}}&\coloneqq\sup\limits_{s,u,t\in[0,T], \ s\ne t} \frac{\left| B_{s, u, t} \right|}{\left( | t - u | \vee | u - s | \right)^{\gamma}},
\end{align}
with the associated normed spaces 
\begin{align}
\mathcal{C}_2^{\gamma}&\coloneqq\{A\in \mathcal{C}_2 : \|A\|_{\mathcal{C}_2^{\gamma}}<+\infty\}, \\
\mathcal{C}_3^{\gamma}&\coloneqq\{B\in \mathcal{C}_3 : \|B\|_{\mathcal{C}_3^{\gamma}}<+\infty\},
\end{align}
then the above argument combined with \autoref{thm:usual_Sewing_lemma2}, \autoref{thm:weak_Sewing2}, and \autoref{prop:Ih}, give the following sewing lemma for unordered times:
\begin{theorem}[Sewing lemma for $\gamma > 0$ and unordered times] \label{thm:weak_sewing_lemma_ordered}
Let $\gamma > 0$.
There exist linear maps 
	\begin{align}
		\Lambda & \colon \delta(\mathcal{C}_2)\cap\mathcal{C}_3^{\gamma} \to \mathcal{C}_2 , \\
		\cI & \colon \{A\in \mathcal{C}_2 \colon \delta A\in\mathcal{C}_3^{\gamma}\}\to \mathcal{C}_1 ,
	\end{align}
\noindent such that for $A \in \mathcal{C}_2$ with $\delta A \in \mathcal{C}_3^{\gamma}$:
	\begin{equation}\label{eq:properties_of_lambda_I}
		\delta \Lambda ( \delta A ) = \delta A, \qquad \mathcal{I} \left( A \right)_0 = 0, \qquad \delta \mathcal{I} \left( A \right) - A = - \Lambda ( \delta A ) .
	\end{equation}
Such maps are unique when $\gamma > 1$ but not when $0 < \gamma \leq 1$.
Furthermore:
\begin{enumerate}
	\item (Regularity of $\Lambda$) For $A \in \mathcal{C}_2$ with $\delta A \in \mathcal{C}_3^{\gamma}$
	\begin{align} 
		\| \Lambda ( \delta A ) \|_{\mathcal{C}_2^{\gamma}} &\leq (C_\gamma+1)\, \| \delta A \|_{\mathcal{C}_3^{\gamma}} && \text{if} \ \gamma \neq 1, \\
	\sup\limits_{s,t\in[0,T], s \neq t} 
	\frac{|\Lambda ( \delta A )_{s,t}|}{\left( 1+ \left| \log |t-s| \right| \right) |t-s|} &\leq (C_1+1) \| \delta A \|_{\mathcal{C}_3^{1}} && \text{if} \ \gamma=1.
	\end{align}
	\item (Regularity of $\mathcal{I}$) Let $0 < \beta <1$ then:
		\begin{equation}
			\|\delta \mathcal{I} ( A )\|_{\mathcal{C}_2^{\beta}} \leq \|A\|_{\mathcal{C}_2^{\beta}}+(C_{\beta\wedge\gamma}+1)\|\delta A\|_{\mathcal{C}_3^{\beta\wedge\gamma}}.
		\end{equation}
\end{enumerate}

\end{theorem}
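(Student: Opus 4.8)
The plan is to obtain \autoref{thm:weak_sewing_lemma_ordered} as a packaging of the ordered-time results already proved: \autoref{thm:usual_Sewing_lemma2} for $\gamma>1$, \autoref{thm:weak_Sewing2} for $0<\gamma\leq1$, and \autoref{prop:Ih} for the regularity of $\mathcal I$. The only genuinely new work is the passage from $\Delta_T^2$ and $\Delta_T^3$ to unordered arguments $[0,T]^2$, $[0,T]^3$, together with the corresponding norm conversions.

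First I would set up the construction. Given $A\in\mathcal C_2$ with $\delta A\in\mathcal C_3^\gamma$, restricting $\delta A$ to $\Delta_T^3$ gives $\delta A\in\mathcal C_3^\gamma(\Delta_T^3)$ with $\|\delta A\|_{\mathcal C_3^\gamma(\Delta_T^3)}\leq\|\delta A\|_{\mathcal C_3^\gamma}$, since for $s\leq u\leq t$ one has $(|t-u|\vee|u-s|)^\gamma\leq|t-s|^\gamma$. Applying \autoref{thm:usual_Sewing_lemma2} (if $\gamma>1$) or \autoref{thm:weak_Sewing2} (if $0<\gamma\leq1$) produces $R=\Lambda(\delta A)\in\mathcal C_2^\gamma(\Delta_T^2)$ — with the logarithmic modification \eqref{estimate=1} when $\gamma=1$ — depending linearly on $\delta A$. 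I then define $\mathcal I(A)=:I\in\mathcal C_1$ by $I_0=0$ and $\delta I=A-\Lambda(\delta A)$ on $\Delta_T^2$ as in \eqref{I1}; this is a one-variable function, hence automatically defined on all of $[0,T]$, and I extend $\Lambda(\delta A)$ to all of $[0,T]^2$ by $\Lambda(\delta A):=A-\delta\mathcal I(A)$. On $\Delta_T^2$ this agrees with the previous $R$, while for pairs $s>t$ it is exactly what is forced by $\Lambda(\delta A)_{s,t}+\Lambda(\delta A)_{t,s}=-\delta A_{s,t,s}$.

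Next I would check the structural properties \eqref{eq:properties_of_lambda_I}: $\delta\Lambda(\delta A)=\delta A-\delta\delta\mathcal I(A)=\delta A$ using $\delta\circ\delta=0$; $\mathcal I(A)_0=0$ by construction; and $\delta\mathcal I(A)-A=-\Lambda(\delta A)$ by the very definition of $\Lambda$ on $[0,T]^2$. That $\Lambda(\delta A)$ is a function of $\delta A$ alone follows on $\Delta_T^2$ from the cited theorems (uniqueness for $\gamma>1$; the explicit construction via the sequence $u_{k,n}$, see \eqref{eq:R_is_u_on_consecutive_dyadics}–\eqref{eq:decomposition_of_R}, for $\gamma\leq1$), and on the remaining pairs from the relation $\Lambda(\delta A)_{s,t}=-\delta A_{s,t,s}-\Lambda(\delta A)_{t,s}$, whose right-hand side involves only $\delta A$. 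Linearity of $\Lambda$ and $\mathcal I$ is clear, every step being linear. Uniqueness for $\gamma>1$ is inherited from \autoref{thm:usual_Sewing_lemma2} (any competitor agrees on $\Delta_T^2$, hence everywhere by the same relation), while for $0<\gamma\leq1$ uniqueness fails because $R+\delta f$ and $\mathcal I(A)+f$ also work for any $f\in\mathcal C_1^\gamma$. For the regularity of $\Lambda$, for $s>t$ I bound $|\Lambda(\delta A)_{s,t}|\leq|\Lambda(\delta A)_{t,s}|+|\delta A_{s,t,s}|$ with $|\delta A_{s,t,s}|\leq\|\delta A\|_{\mathcal C_3^\gamma}|t-s|^\gamma$; dividing by $|t-s|^\gamma$ (resp.\ by $(1+|\log|t-s||)\,|t-s|$ when $\gamma=1$, noting $|t-s|\leq(1+|\log|t-s||)\,|t-s|$) and combining with \eqref{estimate>1}, \eqref{estimate<1}, \eqref{estimate=1} yields exactly the stated bounds, the extra $+1$ coming from this term. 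Finally, the regularity of $\mathcal I$ follows from \autoref{prop:Ih}: since $\delta\mathcal I(A)=A-\Lambda(\delta A)$ on $[0,T]^2$, I bound $\|\delta\mathcal I(A)\|_{\mathcal C_2^\beta}\leq\|A\|_{\mathcal C_2^\beta}+\|\Lambda(\delta A)\|_{\mathcal C_2^{\beta\wedge\gamma}}$ and insert the regularity of $\Lambda$ just obtained; the unordered-time norm of $\delta\mathcal I(A)$ coincides with its ordered-time norm because $\delta\mathcal I(A)_{s,t}=-\delta\mathcal I(A)_{t,s}$.

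The only delicate point is the bookkeeping around unordered times: one must check that the single identity $\Lambda(\delta A)_{s,t}+\Lambda(\delta A)_{t,s}=-\delta A_{s,t,s}$ both defines $\Lambda$ consistently there and transfers all norm estimates at the cost of only a harmless $+1$ in the constant. There is no new analytic difficulty, as everything reduces to \autoref{thm:usual_Sewing_lemma2}, \autoref{thm:weak_Sewing2} and \autoref{prop:Ih}, and the explicit constants are the ones recorded in \autoref{thm:weak_Sewing2} and \autoref{ex:usual_control_function}.
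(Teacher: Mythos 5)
Your proposal is correct and follows essentially the same route as the paper: restrict to ordered triples to invoke \autoref{thm:usual_Sewing_lemma2}, \autoref{thm:weak_Sewing2} and \autoref{prop:Ih}, then recover the value of $\Lambda(\delta A)$ on pairs $s>t$ from the algebraic identity $\Lambda(\delta A)_{s,t}+\Lambda(\delta A)_{t,s}=-\delta A_{s,t,s}$, which is exactly how the paper treats unordered times. Your bookkeeping of the norms (restriction giving $\|\delta A\|_{\mathcal C_3^\gamma(\Delta_T^3)}\leq\|\delta A\|_{\mathcal C_3^\gamma}$, the extra $+1$ from $|\delta A_{s,t,s}|\leq\|\delta A\|_{\mathcal C_3^\gamma}|t-s|^\gamma$, and the antisymmetry of $\delta\mathcal I(A)$) matches the paper's argument.
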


\section{A continuous Lyons-Victoir extension}
\label{section:application_extension_theorem}

One of the most important ideas of rough paths theory is that one can build strong integration theories 
involving non-smooth paths $X \colon \left[ 0, T \right] \mapsto \mathbb{R}^d$, under the condition that one \enquote{enriches} $X$ with a collection of \enquote{iterated integrals of $X$ against itself}.
When $X$ is not smooth, such iterated integrals can not be defined classically: therefore one enriches $X$
rather with a collection of functions which retain some of the algebraic and analytic properties valid in the smooth case. The algebraic properties of the collection of its iterated integrals (also called its signature)
were first discovered by Chen \cite{MR73174}, and the theory of rough paths builds on this algebraic formalism.

In this section we show that the Sewing Lemma (\autoref{thm:weak_sewing_lemma_ordered}) for $\gamma\in(0,1)$ allows to
construct rough paths over H\"older paths $X \colon \left[ 0, T \right] \mapsto \mathbb{R}^d$ in a continuous way, extending
Lyons-Victoir's result \cite{MR2348055} to the setting of a general commutative graded conneted locally finite Hopf algebra (see below).

Let us discuss for example the case of (weakly) geometric rough paths. In this context, we have 
\begin{enumerate}
\item a parameter $\alpha\in(0,1)$, and the associated integer $N:=\lfloor 1/\alpha\rfloor$,
\item the finite set $S:=\cup_{n=1}^N\{1,\ldots,d\}^n$ endowed with the \emph{degree} function 
$$S\ni(i_1,\ldots,i_n) \mapsto |(i_1,\ldots,i_n)|:=n\in\{1,\ldots,N\},$$
\item a family $(\langle X_{\cdot},\tau\rangle)_{\tau\in S}\subset \mathcal{C}_2$ such that
for all $(i_1,\ldots,i_n)\in S$ and $s,u,t\in[0,T]$, one has
\begin{align}
|\langle X_{s,t},(i_1,\ldots,i_n)\rangle|&\lesssim |t-s|^{\alpha n}, \label{X1} \\
\delta \langle X,(i_1,\ldots,i_n)\rangle_{s,u,t}&= \sum_{k=1}^{n-1} \langle X_{s,u},(i_1,\ldots,i_k)\rangle \langle X_{u,t},(i_{k+1},\ldots,i_n)\rangle. \quad \ \label{X2}
\end{align}
\end{enumerate}
Then by \eqref{X1}-\eqref{X2} we have the estimate for all $\tau\in S$ and $s,u,t\in[0,T]$
\begin{equation}\label{X3}
|(\delta \langle X,\tau\rangle)_{s,u,t}|\lesssim \sum_{k=1}^{|\tau|-1} |u-s|^{\alpha k}|t-u|^{\alpha(|\tau|-k)} \lesssim
(|s-u|\vee|t-u|)^{\alpha |\tau|}.
\end{equation}
By definition, $\alpha|\tau|\le\alpha N\le 1$. The \emph{extension problem} is the following: suppose that 
\begin{enumerate}
\item $k\in\{1,\ldots,N-1\}$,
\item we have $(\langle X,\tau\rangle)_{\tau\in S, |\tau|\le k}\subset \mathcal{C}_2$ satisfying \eqref{X1}-\eqref{X2},
\item we have $\tau_0\in S$ with $|\tau_0|=k+1$.
\end{enumerate}
Is it then possible to find $\langle X,\tau_0\rangle\in \mathcal{C}_2$ which satisfies \eqref{X1}-\eqref{X2} as well?

It is now clear that the Sewing Lemma (\autoref{thm:weak_sewing_lemma_ordered}) for $\gamma < 1$ yields a positive
answer to this problem for all $k\in\{1,\ldots,N-1\}$ as long as $\alpha N<1$, by applying the Sewing map $\Lambda$ to
the right-hand side of \eqref{X2} for $\tau=\tau_0$ and calling the result $\langle X,\tau_0\rangle$.
If $\alpha N=1$ then for $k+1=N$ we can construct $\langle X,\tau_0\rangle$ similarly but in the right-hand side of
\eqref{X1} we have $|t-s||\log|t-s||$ instead of $|t-s|$.

We note that for $|\tau|=1$ formula \eqref{X2} implies $\delta \langle X,\tau\rangle=0$, namely $\langle X,\tau\rangle_{st}=f^\tau_t-f^\tau_s$
for some $\alpha$-H\"older function $f$. 
Therefore one starts with a family of $\alpha$-H\"older functions $(f^\tau)_{\tau\in S, |\tau|=1}$ on $[0,T]$,
and applying recursively \autoref{thm:weak_sewing_lemma_ordered} one can construct a family $(\langle X,\tau\rangle)_{\tau\in S}$
satisfying \eqref{X1}-\eqref{X2}. Moreover $(\langle X,\tau\rangle)_{\tau\in B}$ can be seen to depend in a continuous way
on $(f^\tau)_{\tau\in B, |\tau|=1}$. This is the basis of the \emph{extension theorem} that we prove in
\autoref{corollary:Lyons_Victoir_extension} below.

\subsection{Rough paths and Hopf algebras}
In what follows, we consider $H = \left( \bigoplus_{n \in \mathbb{N}} H_n , m, \mathds{1}, \Delta, \epsilon, S \right)$, a graded, connected ($H_0 = \mathrm{span} \left( \mathds{1} \right)$), locally finite ($0 < \dim \left( H_n \right) < + \infty$), commutative Hopf algebra on $\mathbb{R}$ with antipode $S$. The degree $|\tau|$ of $\tau\in\cup_n H_n$ is defined by $|\tau|=n$ if $\tau\in H_n$.

The structure of Hopf algebra relies on many properties of compatibility between the operations (see e.g.\ \cite{MR2290769, cartier2021} for more details on Hopf algebras).
For instance, we will use the compatibility of the product $m$ and the coproduct $\Delta$, which reads (where we note $\tau_{2, 3}$ to be the operator defined by $\tau_{2,3} \left( a \otimes b \otimes c \otimes d \right) \coloneqq a \otimes c \otimes b \otimes d$):
	\begin{equation}\label{eq:compatibility_product_coproduct}
\Delta \circ m =\left( m \otimes m \right) \circ \tau_{2, 3} \left( \Delta \otimes \Delta \right)  .
	\end{equation}

We will also work with the reduced coproduct $\Delta^{\prime} \colon H \to H \otimes H$ defined as
\[
\Delta^\prime \tau:=\Delta\tau-\tau\otimes\mathds{1}-\mathds{1}\otimes\tau.
\]
Recall that for $n \geq 1$, $\Delta^{\prime} \colon H_n \to \bigoplus_{p, q \geq 1, p + q = n} H_p \otimes H_q$, and that $\Delta^{\prime}$ satisfies the coassociativity property:
	\begin{equation}\label{eq:coassociativity_of_Delta}
		\left( \mathrm{Id} \otimes \Delta^{\prime} \right) \Delta^{\prime} = \left( \Delta^{\prime} \otimes \mathrm{Id} \right) \Delta^{\prime} .
	\end{equation}

All those constraints on $H$ are somehow quite restrictive.
In fact, the following result, due to Milnor and Moore, asserts that such an $H$ necessarily has a polynomial structure.

\begin{theorem}[Milnor-Moore, see {\cite{MR174052}}, {\cite[Theorem~3.8.3]{MR2290769}}]\label{thm:milnor_moore}
Let $H$ be a graded connected commutative Hopf algebra.
Then $H$ is a free polynomial algebra, whose indeterminates can be chosen to be homogeneous elements of $H$.
\end{theorem}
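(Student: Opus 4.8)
This is the classical Milnor--Moore (dually, Leray) structure theorem for graded connected commutative Hopf algebras over a field of characteristic zero, and in the write-up I would simply invoke \cite{MR174052} and \cite[Theorem~3.8.3]{MR2290769}; for completeness, here is the shape of an argument. First, the \emph{generators}: writing $H^{+} \coloneqq \bigoplus_{n \geq 1} H_n$ and $Q(H) \coloneqq H^{+}/(H^{+} \cdot H^{+})$ for the graded space of indecomposables, I would, for each $n \geq 1$, pick homogeneous elements of $H_n$ whose classes form a basis of $Q(H)_n$ and collect all of them into a family $(x_i)_{i \in I}$ of homogeneous elements of $H^{+}$. A short induction on the degree shows that the $x_i$ generate $H$ as an $\R$-algebra: one has $H_0 = \R \mathds{1}$, and any $h \in H_n$ becomes, after subtracting a suitable linear combination of the $x_i$ of degree $n$, an element of $(H^{+} \cdot H^{+})_n = \sum_{p + q = n,\ p, q \geq 1} H_p H_q$, whose factors have strictly smaller degree. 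Hence the canonical map $\varphi \colon \R[x_i : i \in I] \to H$ from the free commutative algebra is a graded surjection, and the whole point is to show that it is injective.

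The injectivity of $\varphi$ is the substance of the theorem, and it is where $\mathrm{char}\, \R = 0$ is genuinely used; I would prove it by duality. By local finiteness the graded dual $H^{\circ} \coloneqq \bigoplus_n H_n^{*}$ is a graded connected \emph{cocommutative} Hopf algebra over $\R$, so the cocommutative Milnor--Moore theorem gives $H^{\circ} \cong U(\mathfrak{g})$ as graded Hopf algebras, where $\mathfrak{g} \coloneqq \mathrm{Prim}(H^{\circ})$ is a positively graded Lie algebra; it is locally finite, since a primitive of $H^{\circ}$ is exactly a functional vanishing on $H_0$ and on $(H^{+})^{2}$, so $\mathfrak{g}_n \cong Q(H)_n^{*}$. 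By Poincar\'e--Birkhoff--Witt, $U(\mathfrak{g})$ is isomorphic to the symmetric coalgebra $S(\mathfrak{g})$ as a graded coalgebra, and dualizing --- the graded dual of a coalgebra being an algebra, and the symmetric bialgebra being graded self-dual --- yields
\[
H \;\cong\; (H^{\circ})^{\circ} \;\cong\; U(\mathfrak{g})^{\circ} \;\cong\; S(\mathfrak{g})^{\circ} \;\cong\; S(\mathfrak{g}^{*}) \;\cong\; S\!\left( Q(H) \right)
\]
as graded commutative algebras, using $\mathfrak{g}^{*} \cong Q(H)$ as graded vector spaces. Comparing Hilbert--Poincar\'e series with the previous paragraph forces $\varphi$ to be an isomorphism, so $H$ is the polynomial algebra on the homogeneous family $(x_i)_{i \in I}$.

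The main obstacle is exactly this injectivity step: exhibiting generators is routine, but freeness must import some nontrivial characteristic-zero input --- above, the cocommutative Milnor--Moore theorem together with PBW. A more self-contained alternative avoids the Lie-theoretic machinery: one uses the first Eulerian idempotent $e \coloneqq \sum_{k \geq 1} \frac{(-1)^{k-1}}{k}\,(\mathrm{Id}_H - u \epsilon)^{\star k}$, the convolution logarithm of the identity (a finite sum in each degree by connectedness, with $u \epsilon \colon h \mapsto \epsilon(h) \mathds{1}$ the projection onto $H_0$ and $\star$ the convolution product), checks that $e$ is a graded projector of $H^{+}$ mapping isomorphically onto $Q(H)$, and shows that commutativity of $H$ promotes the resulting coalgebra splitting to an algebra isomorphism $H \cong S(eH)$. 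Either way the hypothesis $\mathrm{char}\, \R = 0$ is essential: over $\F_p$ the statement fails, e.g.\ $\F_p[x]/(x^{p})$ (with $x$ in a positive degree and $\Delta x = x \otimes \mathds{1} + \mathds{1} \otimes x$) is a graded connected commutative Hopf algebra that is not a polynomial algebra.
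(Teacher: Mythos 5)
The paper does not prove this statement --- it is a classical structure theorem and the paper simply cites \cite{MR174052} and \cite[Theorem~3.8.3]{MR2290769}, which is also your primary move, so you match the paper's approach. Your supplementary sketch is a correct outline of the standard argument: generators from the indecomposables $Q(H)$, injectivity via the graded dual $H^{\circ}$, the cocommutative Milnor--Moore theorem, PBW and a Poincar\'e-series comparison (a route that implicitly uses the paper's standing local-finiteness hypothesis when forming $H^{\circ}$), with the Eulerian idempotent as a valid duality-free alternative, and the characteristic-$p$ counterexample $\F_p[x]/(x^p)$ is apt.
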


This polynomial structure will be useful for the construction of linear maps on $H$ that are also multiplicative, which we will call characters.
More precisely:
\begin{definition}[Characters]
Let $H$ be a real algebra on $\mathbb{R}$.
	\begin{enumerate}[label = \textit{\arabic*.}, ref = \textit{\arabic*.}]			
		\item Let $n \geq 1$. We say that a nonzero linear map $X \in L \left( H_{\leq n}, \mathbb{R} \right)$ is a truncated character of order $n$ on $H$ if for all $\sigma, \tau \in H_{\leq n}$ with $\sigma \tau \in H_{\leq n}$, it holds that $\left\langle X, \sigma \tau \right\rangle = \left\langle X, \sigma \right\rangle \left\langle X, \tau \right\rangle$.
			We note $G_n$ the set of truncated characters on $H$ of order $n$.
			
		\item We say that a nonzero linear map $X \in L \left( H, \mathbb{R} \right)$ is a character on $H$ if for all $\sigma, \tau \in H$, $\left\langle X, \sigma \tau \right\rangle = \left\langle X, \sigma \right\rangle \left\langle X, \tau \right\rangle$.
			We note $G$ the set of characters on $H$.
	\end{enumerate}
\end{definition}

We will be interested in rough paths, which we define now.
Heuristically, a rough path is a collection of biprocesses having the nice analytical and algebraic properties of iterated integrals.
\begin{definition}[Rough paths]
Let $H$ be a graded connected locally finite Hopf algebra on $\mathbb{R}$.
Let $\alpha \in \left( 0, 1 \right)$ and $N:=\lfloor1/\alpha\rfloor$.
A $\left( H, \alpha \right)$-rough path is a map $X \colon [0,T]^2 \to G_N$ such that:
\begin{enumerate}[label = \textit{\arabic*.}, ref = \textit{\arabic*.}]
\item\label{item:rough_path_def_1} for all $\tau \in \cup_{n \leq N} H_n$ we have $\left\langle X, \tau \right\rangle \in \mathcal{C}_2^{\alpha|\tau|}$, i.e.\
\begin{equation}\label{X4}
\sup\limits_{\substack{s \neq t \in [0,T]}} \dfrac{\left| \left\langle X_{s, t}, \tau \right\rangle \right|}{\left| t - s \right|^{\alpha |\tau|}} < + \infty.
\end{equation}

\item\label{item:rough_path_def_2} (Chen's relation) for all $\tau \in H_{\leq N}$ and $s, u, t \in [0,T]$, one has 
\begin{equation}\label{X5'}
\left\langle X_{s,t}, \tau \right\rangle = \left\langle X_{s, u} \otimes X_{u, t}, \Delta \tau \right\rangle.
\end{equation}
\end{enumerate}
We note $\mathrm{RP}^{\alpha} \left( H \right)$ the set of all $\left( H, \alpha \right)$-rough paths.
\end{definition}

Then our main result will be the following generalisation of the Lyons-Victoir extension theorem \cite{MR2348055}:
\begin{theorem}[A continuous extension]\label{corollary:Lyons_Victoir_extension}
Let $\alpha \in (0, 1)$ with $\alpha^{-1} \notin \mathbb{N}$. Let us set
\[
\mathscr{C}^{\alpha}_{H_1^*}:=\left\lbrace f:[0,T]\to H_1^*: \ f_0=0, \ \|f_t-f_s\|\lesssim |t-s|^\alpha \right\}.
\] 
There exists a continuous \enquote{extension map} $\mathscr{E} \colon \mathscr{C}^{\alpha}_{H_1^*}\to \mathrm{RP}^{\alpha} \left( H \right)$
such that for all $h \in H_1$ and $f \in \mathscr{C}^{\alpha}_{H_1^*}$, $\left\langle \mathscr{E} \left( f \right) , h \right\rangle = \langle f,h\rangle$.
\end{theorem}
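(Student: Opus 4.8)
The plan is to build $\mathscr{E}(f)$ inductively in the degree, reducing by \autoref{thm:milnor_moore} the construction of the truncated character $X_{s,t}$ to prescribing it on polynomial generators, and using the Sewing map of \autoref{thm:weak_sewing_lemma_ordered} to pass from level $n$ to level $n+1$. First I would fix a homogeneous free generating set $\{e_i\}_{i\in\mathcal{B}}$ of $H$, with $d_i:=|e_i|\ge1$ and $\mathcal{B}_n:=\{i\in\mathcal{B}:d_i=n\}$; since $H$ is graded and connected, $\mathcal{B}_1$ is a basis of $H_1$, and since $H$ is a polynomial algebra, any assignment of real values to the $e_i$ extends uniquely to a character of $H$, whose restriction to $H_{\le N}$ lies in $G_N$, so it suffices to prescribe $\langle X_{s,t},e_i\rangle$ for $d_i\le N$ (setting it to $0$ for $d_i>N$). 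For $i\in\mathcal{B}_1$ I set $\langle X_{s,t},e_i\rangle:=\langle f_t-f_s,e_i\rangle$; this forces $\langle\mathscr{E}(f)_{s,t},h\rangle=\langle f_t-f_s,h\rangle$ for $h\in H_1$, gives the level-$1$ case of \eqref{X4}, and gives \eqref{X5'} there since $\Delta e_i=e_i\otimes\mathds{1}+\mathds{1}\otimes e_i$. Then, assuming $X_{s,t}$ is a truncated character of order $n$ ($1\le n<N$) satisfying \eqref{X4} and \eqref{X5'} on $H_{\le n}$, I define for $i\in\mathcal{B}_{n+1}$
\[
B^i_{s,u,t}:=\langle X_{s,u}\otimes X_{u,t},\Delta^{\prime}e_i\rangle,\qquad\langle X_{s,t},e_i\rangle:=\Lambda(B^i)_{s,t},
\]
and extend $X_{s,t}$ multiplicatively to $H_{\le n+1}$. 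Since $\Delta^{\prime}e_i\in\bigoplus_{p+q=n+1,\,p,q\ge1}H_p\otimes H_q$, the function $B^i$ only involves components of $X$ of degree $\le n$, already built, and it is continuous. The hypothesis $\alpha^{-1}\notin\mathbb{N}$ enters exactly here: it forces $\alpha(n+1)\le\alpha N<1$ (in particular $\ne1$), so \autoref{thm:weak_sewing_lemma_ordered} returns $\Lambda(B^i)\in\mathcal{C}_2^{\alpha(n+1)}$ with no logarithmic correction, which is what \eqref{X4} at level $n+1$ demands.

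Three verifications make this step legitimate, in the following order. (i) $B^i\in\mathcal{C}_3^{\alpha(n+1)}$: writing $\Delta^{\prime}e_i=\sum e_{(1)}\otimes e_{(2)}$ in Sweedler notation with $|e_{(1)}|+|e_{(2)}|=n+1$ and $|e_{(1)}|,|e_{(2)}|\ge1$, the level-$\le n$ bounds \eqref{X4} give $|B^i_{s,u,t}|\lesssim\sum|u-s|^{\alpha|e_{(1)}|}|t-u|^{\alpha|e_{(2)}|}\lesssim(|u-s|\vee|t-u|)^{\alpha(n+1)}$. (ii) $B^i\in\delta(\mathcal{C}_2)$, which is what makes $\Lambda(B^i)$ meaningful: by exactness of the $\delta$-complex this is equivalent to $\delta B^i=0$, and the latter follows by expanding $\delta B^i$, inserting Chen's relation \eqref{X5'} at levels $\le n$ for $\langle X,e_{(1)}\rangle$ and $\langle X,e_{(2)}\rangle$, and collapsing the result by the coassociativity \eqref{eq:coassociativity_of_Delta} of $\Delta^{\prime}$. (iii) The multiplicative extension preserves \eqref{X4} and \eqref{X5'} on all of $H_{\le n+1}$: the bound for $\langle X,\sigma\tau\rangle$ is the product of those for $\langle X,\sigma\rangle$ and $\langle X,\tau\rangle$, and Chen's relation is multiplicative in $\tau$ — if it holds for $\sigma$ and $\tau$ it holds for $\sigma\tau$, because $X_{s,u}\otimes X_{u,t}$ is a character on $H\otimes H$, $\Delta$ is an algebra morphism, and $X_{s,t},X_{s,u},X_{u,t}$ are truncated characters — so it is enough to know \eqref{X5'} on generators, and for $e_i$ with $d_i=n+1$ this is exactly the identity $\delta\langle X,e_i\rangle=B^i$ provided by $\Lambda$. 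Hence $X_{s,t}\in G_{n+1}$, the induction closes at level $N$, and $\mathscr{E}(f):=X\in\mathrm{RP}^{\alpha}(H)$. I expect step (ii), the $\delta$-closedness of the Chen obstruction $B^i$, to be the main obstacle: it is classical in the geometric and branched settings, but has to be carried out once at the level of the abstract reduced coproduct.

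Finally I would prove continuity. Equip $\mathscr{C}^{\alpha}_{H_1^*}$ with the norm $\|f\|:=\sup_{s\ne t}\|f_t-f_s\|/|t-s|^{\alpha}$ and $\mathrm{RP}^{\alpha}(H)$ with the metric $\varrho(X,\bar X):=\max_{\tau}\|\langle X-\bar X,\tau\rangle\|_{\mathcal{C}_2^{\alpha|\tau|}}$, the maximum being over a fixed homogeneous basis of $H_{\le N}$, and aim at $\varrho(\mathscr{E}(f),\mathscr{E}(\bar f))\le C(\|f\|,\|\bar f\|)\,\|f-\bar f\|$ with $C$ depending only on the Hölder norms of $f$ and $\bar f$; this is local Lipschitz continuity, hence continuity. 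The proof is the same induction. At level $1$, $f\mapsto\langle\mathscr{E}(f),e_i\rangle$ is linear, so $\|\langle\mathscr{E}(f)-\mathscr{E}(\bar f),e_i\rangle\|_{\mathcal{C}_2^{\alpha}}\lesssim\|f-\bar f\|$. At level $n+1$, one first records — from (i) and the boundedness of $\Lambda$ — that each $\|\langle\mathscr{E}(f),\tau\rangle\|_{\mathcal{C}_2^{\alpha|\tau|}}$ is bounded by a polynomial in $\|f\|$; then, with $B^i,\bar B^i$ the obstructions attached to $f,\bar f$ and the identity $a\otimes b-\bar a\otimes\bar b=(a-\bar a)\otimes b+\bar a\otimes(b-\bar b)$ applied term by term in $\Delta^{\prime}e_i$, one obtains $\|B^i-\bar B^i\|_{\mathcal{C}_3^{\alpha(n+1)}}\le P(\|f\|,\|\bar f\|)\max_{|\tau|\le n}\|\langle\mathscr{E}(f)-\mathscr{E}(\bar f),\tau\rangle\|_{\mathcal{C}_2^{\alpha|\tau|}}$ for some polynomial $P$; since $\Lambda$ is linear and bounded, $\langle\mathscr{E}(f)-\mathscr{E}(\bar f),e_i\rangle=\Lambda(B^i-\bar B^i)$ inherits the same estimate, and the product-difference identity again extends it from generators to all $\tau\in H_{\le n+1}$. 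Iterating over the $N$ levels yields the stated bound, hence the continuity of $\mathscr{E}$.
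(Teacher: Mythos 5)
Your proof is correct and takes essentially the same approach as the paper: the paper proves this corollary in two lines by writing $\mathscr{E}=\mathscr{P}^{-1}\circ\iota$ and delegating to \autoref{thm:P_is_a_homeomorphism}, whereas you inline the surjectivity and continuity steps of that theorem restricted to the image of $\iota$ — the same level-by-level application of $\Lambda$ to the Chen obstruction, the same coassociativity argument for $\delta$-closedness (the paper produces an explicit primitive $F_{s,t}=\langle X_{0,s}\otimes X_{s,t},\Delta'h\rangle$ rather than invoking exactness of the complex at $\mathcal{C}_3$, but the computation is identical), the same multiplicative extension to $G_{n+1}$, and the same boundedness-then-Lipschitz induction for continuity.
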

In other words, every $\alpha$-H\"older path with values in the finite dimensional space $H_1^*$ can be lifted to a $\alpha$-rough
path on $H$, and this extension can be made in a continuous way. \autoref{corollary:Lyons_Victoir_extension} will be proved as
a corollary of \autoref{thm:P_is_a_homeomorphism} below.

\subsection{Extension to levels higher than \texorpdfstring{$N$}{N}}
By the definition of $\Delta^\prime$, the Chen relation \eqref{X5'} is equivalent to
\begin{equation}\label{X5}
\left( \delta \left\langle X, \tau \right\rangle \right)_{s, u, t} = \left\langle X_{s, u} \otimes X_{u, t}, \Delta^{\prime} \tau \right\rangle.
\end{equation}
Note that \eqref{X1}-\eqref{X2} are particular cases of \eqref{X4}-\eqref{X5}, respectively.
Then, a simple application of the Sewing Lemma (\autoref{thm:weak_sewing_lemma_ordered}) for $\gamma > 1$ yields that
\begin{prop}\label{prop:char}
Every $\left( H, \alpha \right)$-rough path $X \colon [0,T]^2 \to G_N$ has a 
unique extension $X \colon [0,T]^2 \to G$ satisfying \eqref{X4}-\eqref{X5} for all $\tau\in H$.
\end{prop}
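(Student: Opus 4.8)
The plan is to extend $X$ level by level, from $n = N$ upwards, using the Sewing Lemma for $\gamma > 1$ (\autoref{thm:weak_sewing_lemma_ordered}) at each step. Suppose inductively that for some $n \geq N$ we have constructed $\langle X, \tau \rangle \in \mathcal{C}_2^{\alpha n}$ for all $\tau \in H_n$ (with the bases case $n = N$ provided by the hypothesis that $X$ is an $(H,\alpha)$-rough path, together with the fact that $\langle X, \mathds{1} \rangle_{s,t} = 1$), satisfying Chen's relation \eqref{X5'} for all $\tau \in H_{\leq n}$. Fix $\tau \in H_{n+1}$. By coassociativity \eqref{eq:coassociativity_of_Delta}, the element $\Delta' \tau \in \bigoplus_{p+q = n+1, p,q \geq 1} H_p \otimes H_q$ only involves components of degree $\leq n$, so the candidate two-parameter object $B_{s,u,t} := \langle X_{s,u} \otimes X_{u,t}, \Delta' \tau \rangle$ is already defined. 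First I would check that $B = \delta \tilde B$ for some $\tilde B \in \mathcal{C}_2$: this is precisely the statement that $B$ is in the image of $\delta$, which follows because $\delta B = 0$ by coassociativity of $\Delta'$ and multiplicativity of the $X_{s,u}$ (this is the standard computation: $\delta \langle X_{\cdot}, \tau\rangle$ being a coboundary, i.e. $\delta \delta = 0$ applied formally, forces $\delta B = 0$; then exactness of the $\delta$-complex gives $B \in \delta(\mathcal{C}_2)$). Next, using \eqref{X1}--\eqref{X3}-type estimates — namely $\langle X_{s,u}, \sigma\rangle \in \mathcal{C}_2^{\alpha|\sigma|}$ — one gets $|B_{s,u,t}| \lesssim (|u-s| \vee |t-u|)^{\alpha(n+1)}$, hence $B \in \mathcal{C}_3^{\alpha(n+1)}$ with $\alpha(n+1) > \alpha N \cdot \frac{n+1}{N} \geq 1$ (since $n+1 > N$ and $\alpha N$ could be $1$, but $\alpha(n+1) > 1$ strictly when $n \geq N$, as $\alpha(N+1) > \alpha N = \lfloor 1/\alpha \rfloor \cdot \alpha$; one must be slightly careful if $1/\alpha \in \mathbb N$, but here $n \geq N$ and $n+1 \geq N+1 > 1/\alpha$ so $\alpha(n+1) > 1$ always). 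Then apply the Sewing map $\Lambda$ for $\gamma = \alpha(n+1) > 1$ and set $\langle X, \tau \rangle := \mathcal{I}(\tilde B)$, i.e. the unique $I \in \mathcal{C}_1$ with $I_0 = 0$ and $\delta I - \tilde B = -\Lambda(\delta \tilde B) = -\Lambda(B)$. Wait — more directly, I would define $\langle X, \tau \rangle_{s,t}$ so that $\delta\langle X,\tau\rangle = B$ and $\langle X,\tau\rangle \in \mathcal{C}_2^{\alpha(n+1)}$; the Sewing Lemma \autoref{thm:usual_Sewing_lemma2} gives existence and uniqueness of such a two-parameter function once we know $B = \delta(\text{something})$ and $B \in \mathcal{C}_3^{\alpha(n+1)}$.

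Having defined $\langle X, \tau \rangle$ for all $\tau \in H_{n+1}$, I would extend this linearly in $\tau$ over $H_{n+1}$ (so far everything is linear in $\tau$). The remaining point is \emph{multiplicativity}: I must verify that the extended $X_{s,t}$ is still a character on $H_{\leq n+1}$, i.e. $\langle X_{s,t}, \sigma \rho \rangle = \langle X_{s,t}, \sigma\rangle \langle X_{s,t}, \rho\rangle$ whenever $\sigma\rho \in H_{\leq n+1}$. For products landing in degree $\leq n$ this holds by the inductive hypothesis; the new cases are products $\sigma\rho$ with $|\sigma| + |\rho| = n+1$, $|\sigma|, |\rho| \geq 1$. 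Here the strategy is: both $(s,t) \mapsto \langle X_{s,t}, \sigma\rho\rangle$ and $(s,t) \mapsto \langle X_{s,t}, \sigma\rangle\langle X_{s,t}, \rho\rangle$ are elements of $\mathcal{C}_2^{\alpha(n+1)}$ (the latter because it is a product of a $\mathcal{C}_2^{\alpha|\sigma|}$ and a $\mathcal{C}_2^{\alpha|\rho|}$ function, plus the correction terms from $\delta$), they both vanish at $s = t$, and — crucially — they have the \emph{same} image under $\delta$. That last claim is where one uses the compatibility \eqref{eq:compatibility_product_coproduct} between $m$ and $\Delta$: computing $\delta\langle X_{\cdot}, \sigma\rho\rangle = \langle X \otimes X, \Delta'(\sigma\rho)\rangle$ and $\delta(\langle X_{\cdot},\sigma\rangle\langle X_\cdot,\rho\rangle)$ and matching them term by term using that $X_{s,u}, X_{u,t}$ are already characters on $H_{\leq n}$. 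Then by the uniqueness part of \autoref{thm:usual_Sewing_lemma2} (for $\gamma = \alpha(n+1) > 1$), two functions in $\mathcal{C}_2^{\gamma}$ with the same $\delta$ and vanishing on the diagonal must coincide. This closes the induction, and taking the union over all $n$ yields $X \colon [0,T]^2 \to G$ satisfying \eqref{X4}--\eqref{X5} for all $\tau \in H$. Uniqueness of the whole extension follows from the same argument: any two extensions agree on $H_N$ and, inductively, their difference at level $n+1$ lies in $\mathcal{C}_2^{\alpha(n+1)}$ with vanishing $\delta$ and vanishing diagonal, hence is zero.

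The main obstacle I anticipate is the multiplicativity check, i.e. verifying that $\delta\langle X_\cdot, \sigma\rho\rangle = \delta(\langle X_\cdot,\sigma\rangle\langle X_\cdot,\rho\rangle)$. This is a purely algebraic identity but requires carefully unwinding \eqref{eq:compatibility_product_coproduct}: one writes $\Delta'(\sigma\rho)$ in terms of $\Delta'\sigma$, $\Delta'\rho$, $\sigma$, $\rho$ and $\mathds{1}$ (using $\Delta \circ m = (m\otimes m)\circ\tau_{2,3}(\Delta\otimes\Delta)$ and then subtracting the primitive parts), pairs against $X_{s,u}\otimes X_{u,t}$, and must recognise the result as exactly the Leibniz-type expansion of $\delta(\langle X_\cdot,\sigma\rangle\langle X_\cdot,\rho\rangle)_{s,u,t} = \langle X_{s,t},\sigma\rangle\langle X_{s,t},\rho\rangle - \langle X_{s,u},\sigma\rangle\langle X_{s,u},\rho\rangle - \langle X_{u,t},\sigma\rangle\langle X_{u,t},\rho\rangle$, which itself must be re-expressed using Chen's relation for $\sigma$ and $\rho$ (valid since $|\sigma|, |\rho| \leq n$) and the already-known multiplicativity of $X_{s,u}$ and $X_{u,t}$ on $H_{\leq n}$. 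The bookkeeping is somewhat delicate but standard in the rough-paths / Hopf-algebra literature; no genuinely new idea is needed beyond what is in \autoref{thm:weak_sewing_lemma_ordered}.
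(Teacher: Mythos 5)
Your proposal follows essentially the same route as the paper: inductively construct $\langle X,\tau\rangle$ level by level above $N$ by applying the Sewing map for $\gamma=\alpha(n+1)>1$ to the germ $B_{s,u,t}:=\langle X_{s,u}\otimes X_{u,t},\Delta'\tau\rangle$, and then recover multiplicativity on products by noting that $\langle X,\sigma\tau\rangle$ and $\langle X,\sigma\rangle\langle X,\tau\rangle$ live in $\mathcal{C}_2^{\gamma}$, have the same $\delta$, and therefore coincide by uniqueness in \autoref{thm:usual_Sewing_lemma2}. The observation $\alpha(n+1)>1$ for $n\ge N$ (including the boundary case $1/\alpha\in\mathbb N$) is exactly the one the paper makes.

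The one place you diverge is in showing $B\in\delta(\mathcal{C}_2)$. You invoke exactness of the cochain complex: ``$\delta B=0$ by coassociativity $\dots$ then exactness gives $B\in\delta(\mathcal{C}_2)$''. This is a valid strategy, but it needs exactness at the $\mathcal{C}_3\to\mathcal{C}_4$ stage, which the paper only states for $\mathcal{C}_2\to\mathcal{C}_3$ and which therefore needs a separate verification or citation. Moreover, your parenthetical justification (``$\delta\langle X_\cdot,\tau\rangle$ being a coboundary $\dots$ $\delta\delta=0$ applied formally forces $\delta B=0$'') is circular as written, since $\langle X_\cdot,\tau\rangle$ is precisely what is being constructed: one must compute $\delta B$ directly using coassociativity of $\Delta'$ and multiplicativity of $X_{s,u}$, $X_{u,t}$, $X_{u',t'}$. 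The paper sidesteps all of this by exhibiting an explicit preimage: it sets $F_{s,t}:=\langle X_{0,s}\otimes X_{s,t},\Delta'\tau\rangle$ and checks, by the same coassociativity computation, that $\delta F=-B$. The explicit preimage is cleaner and avoids any appeal to exactness beyond what is stated; if you prefer the exactness route, state and use the required fact explicitly rather than the formal $\delta\delta=0$ argument. Everything else in your plan, including the multiplicativity check via \eqref{eq:compatibility_product_coproduct} and the uniqueness statement, matches the paper.
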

\begin{proof}
Fix $\tau \in H_{N + 1}$.
Arguing as in \eqref{X3}, by \eqref{X4}-\eqref{X5} we have
\begin{equation}
|\left\langle X_{s, u} \otimes X_{u, t}, \Delta^{\prime} \tau \right\rangle| \lesssim (|s-u|\vee|t-u|)^{\alpha (N+1)},
\end{equation}
\noindent where by definition $\alpha(N+1)>1$. In order to apply \autoref{thm:weak_sewing_lemma_ordered} to $(s, u, t) \mapsto \left\langle X_{s, u} \otimes X_{u, t}, \Delta^{\prime} \tau \right\rangle$, we still have to check
that this function belongs to $\delta (\mathcal{C}_2)$. This is true since,
if we consider the function $F\in \mathcal{C}_2$, defined by $F_{s,t}:=\left\langle X_{0, s} \otimes X_{s, t}, \Delta^{\prime} \tau  \right\rangle$, then
\[
\begin{split}
(\delta F)_{s,u,t}
& = \langle X_{0,s} \otimes (\delta X)_{s,u,t}-(\delta X)_{0,s,u}\otimes X_{u,t}-X_{s,u}\otimes X_{u,t}, \Delta^{\prime} \tau \rangle
\\ & = \langle X_{0,s}\otimes X_{s,u}\otimes X_{u,t}, \left( \mathrm{Id} \otimes \Delta^{\prime} \right) \Delta^{\prime} \tau - \left( \Delta^{\prime} \otimes \mathrm{Id} \right) \Delta^{\prime}\tau\rangle \\
& \quad - \langle X_{s,u}\otimes X_{u,t}, \Delta^{\prime} \tau \rangle \\
&= -\langle X_{s,u}\otimes X_{u,t}, \Delta^{\prime} \tau \rangle,
\end{split}
\]
%This follows from 
%\[
%\begin{split}
%\delta\left(\langle X \otimes X, \Delta^{\prime} \tau \rangle\right)_{a,b,c,d}
%& = \langle (\delta X)_{a,b,c} \otimes X_{c,d}-X_{a,b}\otimes (\delta X)_{b,c,d}, \Delta^{\prime} \tau \rangle
%\\ & = \langle X_{a,b}\otimes X_{b,c}\otimes X_{c,d}, 
%\left( \Delta^{\prime} \otimes \mathrm{Id} \right) \Delta^{\prime}-\left( \mathrm{Id} \otimes \Delta^{\prime} \right) \Delta^{\prime} \rangle =0
%\end{split}
%\]
by coassociativity of $\Delta^{\prime}$, recall \eqref{eq:coassociativity_of_Delta}. Therefore by \autoref{thm:weak_sewing_lemma_ordered} for $\gamma=\alpha(N+1)>1$ there is a unique $\langle X,\tau\rangle\in\mathcal{C}_2^{\gamma}$ with the desired
properties \eqref{X4}-\eqref{X5}, given by $\langle X,\tau\rangle = \Lambda \left((s, u, t) \mapsto \left\langle X_{s, u} \otimes X_{u, t}, \Delta^{\prime} \tau \right\rangle \right)$.
By recurrence, one constructs in the same way $\langle X,\tau\rangle$ for all $\tau \in \oplus_{n \geq N + 1} H_n$.

However we still have to prove that this construction gives an element
of $G$, namely that $\langle X,\sigma\tau\rangle=\langle X,\tau\rangle\langle X,\sigma\rangle$ for all $\sigma, \tau \in H$. It is enough to assume $\sigma \tau \in \oplus_{n \geq N + 1} H_n$. 
By \eqref{eq:compatibility_product_coproduct} we have
\[
\Delta^{\prime} \left( \sigma \tau \right)=\left( m \otimes m \right) \circ \tau_{2, 3} \left( \Delta \sigma \otimes \Delta \tau - 1 \otimes \sigma \otimes 1 \otimes \tau - \sigma \otimes 1 \otimes \tau \otimes 1 \right).
\]
By \eqref{X5}
\begin{equation}
\begin{split}
&\left( \delta \left\langle X, \sigma \tau \right\rangle \right)_{s, u, t}  = \left\langle X_{s, u} \otimes X_{u, t}, \Delta^{\prime} \left( \sigma \tau \right) \right\rangle .
%		\\ & = \left\langle X_{s, u} \otimes X_{u, t} , \left( m \otimes m \right) \circ \tau_{2, 3} \left( \Delta \sigma \otimes \Delta \tau - 1 \otimes \sigma \otimes 1 \otimes \tau - \sigma \otimes 1 \otimes \tau \otimes 1 \right) \right\rangle.
\end{split}
\end{equation}
On the other hand 
\begin{equation}
\begin{split}
&\delta \left( \left\langle X, \sigma \right\rangle\left\langle X, \tau \right\rangle \right)_{s, u, t}  = 
\\ & =\left\langle X_{s, t}, \sigma \right\rangle \left\langle X_{s, t}, \tau \right\rangle - \left\langle X_{s, u}, \sigma \right\rangle \left\langle X_{s, u}, \tau \right\rangle - \left\langle X_{u, t}, \sigma \right\rangle \left\langle X_{u, t}, \tau \right\rangle 
\end{split}
\end{equation}
and by the Chen relation \eqref{X5'}
\begin{alignat}{2}
&\left\langle X_{s, t}, \sigma \right\rangle \left\langle X_{s, t}, \tau \right\rangle	 && = \left\langle X_{s, u} \otimes X_{u, t} , \left( m \otimes m \right) \circ \tau_{2, 3} \left( \Delta \sigma \otimes \Delta \tau\right)\right\rangle,
\\ &\left\langle X_{s, u}, \sigma \right\rangle \left\langle X_{s, u}, \tau \right\rangle && = \left\langle X_{s, u} \otimes X_{u, t} , 1 \otimes \sigma \otimes 1 \otimes \tau \right\rangle,
\\ &\left\langle X_{u, t}, \sigma \right\rangle \left\langle X_{u, t}, \tau \right\rangle && = \left\langle X_{s, u} \otimes X_{u, t} , \sigma \otimes 1 \otimes \tau \otimes 1 \right\rangle.
\end{alignat}
We obtain $\delta \left\langle X, \sigma \tau \right\rangle=\delta \left( \left\langle X, \sigma \right\rangle\left\langle X, \tau \right\rangle \right)\in\mathcal{C}_3^{\alpha(N+1)}$. Since 
$\left\langle X, \sigma \tau \right\rangle$ and $\left\langle X, \sigma \right\rangle\left\langle X, \tau \right\rangle$ are both in $\mathcal{C}_2^{\gamma}$, we conclude by \autoref{thm:weak_sewing_lemma_ordered}.
\end{proof}

Since $H$ is locally finite, the same can be said of the corresponding set of indeterminates.
Hence from \autoref{thm:milnor_moore}, for each $n \geq 1$, there exists a finite set $B_n \subset H_n$  such that:
	\begin{equation}
		H = \mathbb{R} \left[ \mathds{1} \cup \bigcup_{n \geq 1} B_n \right] .
	\end{equation}
We denote 
\[
H_{\leq n}:=\oplus_{k=0}^n H_k, \qquad B_{\le n}:=\cup_{k=1}^n B_k.
\]
We also note $B \coloneqq \bigcup_{n \geq 1} B_n$, to be the set of generating monomials, so that $H = \mathbb{R} \left[ \mathds{1} \cup B \right]$. 
The Milnor-Moore theorem asserts the existence of such a basis of generating monomials, but $B$ is neither unique nor canonical in general, see \autoref{ex:basis_of_some_hopf_algebras} below for some examples in usual cases.
We note that $B_1$ is always a linear basis for $H_1$.

\begin{example}[Examples of $(H,\alpha)$-rough paths] \label{ex:basis_of_some_hopf_algebras}
Here are some examples of Hopf algebras to which the above applies:
\begin{enumerate}
	\item The shuffle algebra on the alphabet $\{1,\ldots,d\}$, and the theory of (weakly) geometric rough paths on $\R^d$; in this case, one can take as a basis $B$ the set of Lyndon words \cite{MR2035110}.
	\item The Butcher-Connes-Kreimer algebra of rooted forests with nodes decorated by $\{1,\ldots,d\}$, and the theory of branched rough paths on $\R^d$; in this case, one can take as basis $B$ the set of trees \cite{MR2578445}.
\end{enumerate}
See \cite{MR3300969}, \cite[Section~4]{MR4093955} and \cite{MR3949967} for more details on geometric and branched rough paths.
\begin{enumerate}\setcounter{enumi}{2}
	\item Quasi-shuffle algebras and quasi-geometric rough paths \cite{Be20}.
	\item The Hopf algebra of Lie group integrators and planarly branched rough paths \cite{MR4120383}.
	\item The recently introduced (and so far un-named) Hopf algebra of \cite[Section 6]{LOT21}.
\end{enumerate}
%MR4013830
\end{example}

\subsection{An isomorphism}
What characterizes a $(H,\alpha)$-rough path? Sin\-ce $B$ is a set of generators of $H$ as an algebra,
by linearity and multiplicativity, $X \in \mathrm{RP}^{\alpha} \left( H \right)$ is uniquely determined by the values of $\left\langle X, h \right\rangle$ for $h \in B$. In fact, by \autoref{prop:char}, the values of $\left\langle X, h \right\rangle$ for $h \in \cup_{n>N} H_n$ are uniquely determined by the values for $h \in \cup_{n\le N} B_n$. 

It remains to characterize the family $(\left\langle X, h \right\rangle)_{h \in B_{\le N}}$. For all $\tau \in B_{\le N}$, by the Chen relation \eqref{X5}, $\delta \langle X, \tau \rangle$ is characterized by 
$(\left\langle X, h \right\rangle)_{h \in B_{\le |\tau|-1}}$. However, since $\alpha |\tau|\leq 1$, the
function $\langle X, \tau \rangle$ is determined by $\delta \langle X, \tau \rangle\in\mathcal C^{\alpha|\tau|}_3$ only up to a
$(\alpha |\tau|)$-H\"older function $f^\tau:[0,T]\to\R$ such that $f^\tau_0=0$, see \autoref{remark:non_uniqueness}. 

It was indeed shown in \cite{MR4093955} that, in the case of branched rough paths, 
$\mathrm{RP}^{\alpha} \left( H \right)$ is in 
a bijective correspondence with
	\begin{equation}
		%\mathscr{C^{\alpha}} \coloneqq 
		\mathscr{C}_{B}^{\alpha} \coloneqq \left\lbrace \left( f^{h} \right)_{h \in B_{\leq N}} : \text{for all } h \in B_{\leq N}, \ f^h \in \mathcal{C}^{\left| h \right| \alpha}_1 \text{ and } f^h_0 = 0 \right\rbrace .
	\end{equation}
While the construction of \cite{MR4093955}, restricted to the Butcher-Connes-Kreimer Hopf algebra, used the Hairer-Kelly map \cite{MR3300969} and a
Lyons-Victoir extension technique, the approach of this paper, based on the Sewing Lemma
(\autoref{thm:weak_sewing_lemma_ordered}) for $\gamma < 1$, is more elementary and more general at the same time. For a general
(graded, connected, locally finite, commutative) Hopf algebra $H$ we introduce the map
	\begin{equation} \label{eq:bijection_rough_paths_functions}
		\writefun{\mathscr{P}}{\mathrm{RP}^{\alpha} \left( H \right)}{\mathscr{C}_{B}^{\alpha}}{X}{\mathscr{P} \left( X \right) \coloneqq \left( \cI \left( \left\langle X, h \right\rangle \right) \right)_{h \in B_{\leq N}} ,}
	\end{equation}
	where $\cI$ is the integration map defined in \autoref{thm:weak_sewing_lemma_ordered} and $N=\lfloor1/\alpha\rfloor$.
We prove that $\mathscr{P}$ is bijective and furthermore we show that $\mathscr{P}$ is bicontinuous with respect to the distances defined for $X, Y \in \mathrm{RP}^{\alpha} \left( H \right)$ resp.\ $f, g \in \mathscr{C}_{B}^{\alpha}$ by:
	\begin{equation}
		\begin{dcases}
			d_{\mathrm{RP}^{\alpha} \left( H \right)} \left( X, Y \right) & \coloneqq \sum\limits_{h \in B_{\le N}} \left\| \left\langle X - Y, h \right\rangle \right\|_{\mathcal{C}_2^{| h | \alpha}} , \\
			d_{\mathscr{C}_{B}^{\alpha}} \left( f, g \right) & \coloneqq \sum\limits_{h \in B_{\leq N}} \left\| g^h - f^h \right\|_{\mathcal{C}_1^{| h | \alpha}} .
		\end{dcases}
	\end{equation}

The main result of this section is the following.

\begin{theorem} \label{thm:P_is_a_homeomorphism}
Let $\alpha \in \left( 0, 1 \right)$ with $\alpha^{-1} \notin \mathbb{N}$.
The map $\mathscr{P}$ in \eqref{eq:bijection_rough_paths_functions} 
is a locally bi-Lipschitz homeomorphism between $\left( \mathrm{RP}^{\alpha} \left( H \right), d_{\mathrm{RP}^{\alpha} \left( H \right)} \right)$ and $\left( \mathscr{C}_{B}^{\alpha}, d_{\mathscr{C}_{B}^{\alpha}} \right)$.
\end{theorem}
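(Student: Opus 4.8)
The plan is to verify, in order, that $\mathscr{P}$ is well defined, injective, surjective, and finally that $\mathscr{P}$ and $\mathscr{P}^{-1}$ are bi-Lipschitz on bounded sets; since $\alpha^{-1}\notin\mathbb{N}$ we have $N\alpha<1$, so every application below of \autoref{thm:weak_sewing_lemma_ordered} falls in the regime $\gamma<1$ (no logarithmic corrections). \emph{Well-definedness} is immediate: for $X\in\mathrm{RP}^\alpha(H)$ and $h\in B_{\le N}$, one has $\langle X,h\rangle\in\mathcal{C}_2^{|h|\alpha}$ by \eqref{X4} and $\delta\langle X,h\rangle\in\mathcal{C}_3^{|h|\alpha}$ by \eqref{X5}, so the regularity of $\cI$ in \autoref{thm:weak_sewing_lemma_ordered} (with exponents $\beta=\gamma=|h|\alpha<1$) gives $\cI(\langle X,h\rangle)\in\mathcal{C}_1^{|h|\alpha}$, while $\cI(\langle X,h\rangle)_0=0$ by \eqref{eq:properties_of_lambda_I}; hence $\mathscr{P}(X)\in\mathscr{C}_B^\alpha$. \emph{Injectivity}: since $X_{s,t}\in G_N$ is multiplicative and $H=\mathbb{R}[\mathds{1}\cup B]$ by \autoref{thm:milnor_moore}, $X$ is determined by $(\langle X,h\rangle)_{h\in B_{\le N}}$, so it suffices to argue by induction on $|h|$. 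If $\langle X,g\rangle=\langle Y,g\rangle$ for all $g\in B$ with $|g|<|h|$, then $\delta\langle X,h\rangle=\langle X\otimes X,\Delta'h\rangle$ depends only on these, so $\delta\langle X,h\rangle=\delta\langle Y,h\rangle$, whence $\Lambda(\delta\langle X,h\rangle)=\Lambda(\delta\langle Y,h\rangle)$ ($\Lambda$ being a function of its argument). Rearranging \eqref{eq:properties_of_lambda_I} gives $\langle X,h\rangle=\delta\cI(\langle X,h\rangle)+\Lambda(\delta\langle X,h\rangle)$, and since $\cI(\langle X,h\rangle)=\cI(\langle Y,h\rangle)$ is a component of $\mathscr{P}(X)=\mathscr{P}(Y)$, we conclude $\langle X,h\rangle=\langle Y,h\rangle$, completing the induction.

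\emph{Surjectivity} is the substantive step, amounting to a constructive and continuous Lyons--Victoir extension. Given $f=(f^h)_{h\in B_{\le N}}\in\mathscr{C}_B^\alpha$, I would build $\langle X,\cdot\rangle$ on $H_{\le N}$ by induction on the degree $n$: having defined $\langle X,g\rangle$ for all $g\in B_{\le n-1}$ and extended multiplicatively to $H_{\le n-1}$, set, for $h\in B_n$, $\langle X,h\rangle:=\delta f^h$ if $n=1$, and $\langle X,h\rangle:=\Lambda(B^h)+\delta f^h$ if $n\ge2$, where $B^h:=\big((s,u,t)\mapsto\langle X_{s,u}\otimes X_{u,t},\Delta'h\rangle\big)$. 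The point is that $B^h$ is already defined ($\Delta'h$ involves only degrees $\le n-1$), lies in $\mathcal{C}_3^{n\alpha}$ by the estimate \eqref{X3}, and lies in $\delta(\mathcal{C}_2)$ by the coassociativity argument in the proof of \autoref{prop:char} (with $F_{s,t}:=\langle X_{0,s}\otimes X_{s,t},\Delta'h\rangle$ one has $\delta F=-B^h$), so $\Lambda(B^h)\in\mathcal{C}_2^{n\alpha}$ with $\delta\Lambda(B^h)=B^h$ by \autoref{thm:weak_sewing_lemma_ordered}; consequently $\delta\langle X,h\rangle=B^h$, i.e.\ the Chen relation \eqref{X5} holds on generators, and $\langle X,h\rangle\in\mathcal{C}_2^{n\alpha}$ since $\delta f^h\in\mathcal{C}_2^{n\alpha}$. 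Extend $\langle X,\cdot\rangle$ to $H_{\le N}$ by multiplicativity on monomials and linearity; then \eqref{X4} holds on all $\tau\in H_{\le N}$ by multiplicativity, and \eqref{X5} extends to products via $\Delta(\sigma\tau)=(m\otimes m)\circ\tau_{2,3}(\Delta\sigma\otimes\Delta\tau)$ and multiplicativity, exactly as in the second half of the proof of \autoref{prop:char}, so $X\in\mathrm{RP}^\alpha(H)$ (and, if desired, extends uniquely to a $G$-valued rough path by \autoref{prop:char}). Finally, for $h\in B_{\le N}$, $\delta\cI(\langle X,h\rangle)=\langle X,h\rangle-\Lambda(\delta\langle X,h\rangle)=\delta f^h$ (using $\Lambda(B^h)=\Lambda(\delta\langle X,h\rangle)$, and $\Lambda(0)=0$ when $n=1$), and since $\cI(\langle X,h\rangle)$ and $f^h$ both vanish at $0$ they coincide, i.e.\ $\mathscr{P}(X)=f$.

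\emph{Bi-Lipschitz bounds.} First, an induction on degree shows $\mathscr{P}$ and $\mathscr{P}^{-1}$ map bounded sets to bounded sets: if all $\|\langle X,h\rangle\|_{\mathcal{C}_2^{|h|\alpha}}\le M$ then $\|\delta\langle X,h\rangle\|_{\mathcal{C}_3^{|h|\alpha}}$ is bounded by a polynomial in $M$ via \eqref{X3} and multiplicativity, and \autoref{thm:weak_sewing_lemma_ordered} bounds $\|\delta\cI(\langle X,h\rangle)\|_{\mathcal{C}_2^{|h|\alpha}}$ accordingly; conversely if all $\|f^h\|_{\mathcal{C}_1^{|h|\alpha}}\le M$ the recursion of the previous paragraph together with $\|\Lambda(B^h)\|_{\mathcal{C}_2^{n\alpha}}\le(C_{n\alpha}+1)\|B^h\|_{\mathcal{C}_3^{n\alpha}}$ bounds all $\|\langle X,h\rangle\|_{\mathcal{C}_2^{|h|\alpha}}$ by a polynomial in $M$. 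Now fix $M$ and work on the corresponding bounded sets. For $\mathscr{P}$: by linearity of $\cI$ and \autoref{thm:weak_sewing_lemma_ordered}, $\|\cI(\langle X-Y,h\rangle)\|_{\mathcal{C}_1^{|h|\alpha}}\le\|\langle X-Y,h\rangle\|_{\mathcal{C}_2^{|h|\alpha}}+(C_{|h|\alpha}+1)\|\delta\langle X-Y,h\rangle\|_{\mathcal{C}_3^{|h|\alpha}}$; writing $X\otimes X-Y\otimes Y=(X-Y)\otimes X+Y\otimes(X-Y)$ in $\delta\langle X-Y,h\rangle=\langle X\otimes X-Y\otimes Y,\Delta'h\rangle$ and telescoping the products of the $\langle X,b\rangle$, $\langle Y,b\rangle$ (for generators $b$ of degree $<|h|$) into differences $\langle X-Y,b\rangle$ times $M$-bounded factors, one gets $\|\delta\langle X-Y,h\rangle\|_{\mathcal{C}_3^{|h|\alpha}}\le C_M\sum_{b\in B,\,|b|<|h|}\|\langle X-Y,b\rangle\|_{\mathcal{C}_2^{|b|\alpha}}$, and summing over $h\in B_{\le N}$ yields $d_{\mathscr{C}_B^\alpha}(\mathscr{P}(X),\mathscr{P}(Y))\le C_M\, d_{\mathrm{RP}^\alpha(H)}(X,Y)$. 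The reverse inequality follows the same way, by induction on degree, from $\langle X-Y,h\rangle=\Lambda(B^h_X-B^h_Y)+\delta(f^h-g^h)$ (linearity of $\Lambda$), with $\|\Lambda(B^h_X-B^h_Y)\|_{\mathcal{C}_2^{|h|\alpha}}\le(C_{|h|\alpha}+1)\|B^h_X-B^h_Y\|_{\mathcal{C}_3^{|h|\alpha}}$ and $B^h_X-B^h_Y$ telescoping into lower-degree differences times $M$-bounded factors. Hence $\mathscr{P}$ is bi-Lipschitz on each bounded set and, being a locally Lipschitz bijection with locally Lipschitz inverse, a locally bi-Lipschitz homeomorphism.

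I expect the main obstacle to be the nonlinearity of the Chen relation: the quadratic term $\langle X\otimes X,\Delta'h\rangle$ rules out any global Lipschitz estimate and forces all continuity bounds through the degree induction, carrying along products of rough-path norms (hence the \emph{local} statement). Organising this induction so that, at each degree, existence, the Hölder bound, the Chen relation, the norm bound and the difference bound are established simultaneously — and verifying that the multiplicative extension is compatible with Chen (an algebraic computation, but one that must be carried out for general elements, not just monomials) — is the technical heart; the analytic input is supplied entirely by \autoref{thm:weak_sewing_lemma_ordered}.
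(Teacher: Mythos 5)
Your proof is correct and follows essentially the same five-part structure as the paper (well-definedness, injectivity via the identity $\delta\circ\cI = \mathrm{id} - \Lambda\circ\delta$, surjectivity by degree induction using $\Lambda$ on the coherent germ $B^h$, and a degree-inductive telescoping argument for the local Lipschitz bounds). The only small deviation is in the Lipschitz bound for $\mathscr{P}$ itself: you telescope $X\otimes X - Y\otimes Y$ to get a constant depending on the bounded set, whereas the paper observes directly that $\|\delta\langle X-Y,h\rangle\|_{\mathcal{C}_3^{|h|\alpha}} \lesssim \|\langle X-Y,h\rangle\|_{\mathcal{C}_2^{|h|\alpha}}$ by the triangle inequality applied to the operator $\delta$ acting on the two-variable function $\langle X-Y,h\rangle$, yielding a cleaner \emph{global} Lipschitz bound for $\mathscr{P}$ (the nonlinearity and hence the locality enter only through $\mathscr{P}^{-1}$); both arguments are valid for the stated locally bi-Lipschitz conclusion.
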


Before proving this result, let us discuss some corollaries. First we prove \autoref{corollary:Lyons_Victoir_extension}.

\begin{proof}[Proof of \autoref{corollary:Lyons_Victoir_extension}]
It suffices to identify $\mathscr{C}^{\alpha}_{H_1^*}$ with $\mathscr{C}^{\alpha}_{B_1}$, where
\[
 \mathscr{C}^{\alpha}_{B_1}:=\left\lbrace (f^h)_{h\in B_1}: \  \text{for all } h \in B_{1}, \ f^h \in \mathcal{C}^{\alpha}_1 \text{ and } f^h_0 = 0 \right\} 
\]
and then take $\mathscr{E} = \mathscr{P}^{-1}\circ \iota$, where $\iota:\mathscr{C}_{B_1}^{\alpha}\to\mathscr{C}_{B}^{\alpha}$ is the canonical injection obtained by defining $f^h:=0$ for all
$h\in B_{\le N}\setminus B_1$.
\end{proof}

\begin{corollary}[An action on Rough Paths, see \cite{MR4093955}]\label{corollary:action_on_rough_paths}
Let $\alpha \in \left( 0, 1 \right)$ with $\alpha^{-1} \notin \mathbb{N}$ and define:
	\begin{equation} \label{eq:definition_action}
		\writefun{T}{\mathscr{C}_B^{\alpha} \times \mathrm{RP}^{\alpha} \left( H \right)}{\mathrm{RP}^{\alpha} \left( H \right)}{\left( g, X \right)}{gX \coloneqq \mathscr{P}^{-1} \left( g + \mathscr{P} \left( X \right) \right).}
	\end{equation}
Then:
	\begin{enumerate}[label = \textit{\arabic*.}, ref = \textit{\arabic*.}]
		\item\label{item:action_1} $T$ is an action: for each $g, g^{\prime} \in \mathscr{C}^{\alpha}_B$ and $X \in \mathrm{RP}^{\alpha} \left( H \right)$, $g^{\prime} \left( g X \right) = \left( g + g^{\prime} \right) X$.
		\item\label{item:action_2} $T$ is free and transitive: for each $X, X^{\prime} \in \mathrm{RP}^{\alpha} \left( H \right)$, there exists a unique $g \in \mathscr{C}^{\alpha}_B$ such that $X^{\prime} = g X$.
		\item\label{item:action_3} $T$ is continuous.
		\item\label{item:action_4} Let $g \in \mathscr{C}^{\alpha}_B$ be such that there exists a unique $h \in B$ such that $g^h \neq 0$.
			Then $\left\langle gX, h \right\rangle = \left\langle X, h \right\rangle + \delta g^h$.			
			Furthermore, let $\tilde{B} \subset B \setminus \left\lbrace h \right\rbrace$ be a set of monomials such that $\Delta^{\prime} \left( \mathbb{R} \left[ \tilde{B} \right] \right) \subset \mathbb{R} \left[ \tilde{B} \right] \otimes \mathbb{R} \left[ \tilde{B} \right]$.
			Then for any $\tau \in \mathbb{R} \left[ \tilde{B} \right]$, $\left\langle gX, \tau \right\rangle = \left\langle X, \tau \right\rangle$.
	\end{enumerate}
\end{corollary}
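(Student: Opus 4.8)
For items \ref{item:action_1}--\ref{item:action_3}, the plan is to read them off directly from \autoref{thm:P_is_a_homeomorphism} together with the fact that $(\mathscr{C}_B^{\alpha}, d_{\mathscr{C}_B^{\alpha}})$ is a metric topological vector space (coordinatewise addition, $d_{\mathscr{C}_B^{\alpha}}$ translation invariant), so the components $g^h$ may be freely added and subtracted. Writing $Y := gX$, the definition of $T$ gives $\mathscr{P}(Y) = g + \mathscr{P}(X)$, whence $g'(gX) = \mathscr{P}^{-1}\!\big(g' + \mathscr{P}(gX)\big) = \mathscr{P}^{-1}\!\big((g'+g) + \mathscr{P}(X)\big) = (g+g')X$, which is \ref{item:action_1}. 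For \ref{item:action_2}: given $X, X' \in \mathrm{RP}^{\alpha}(H)$, the element $g := \mathscr{P}(X') - \mathscr{P}(X) \in \mathscr{C}_B^{\alpha}$ satisfies $gX = \mathscr{P}^{-1}(\mathscr{P}(X')) = X'$ (transitivity), and applying $\mathscr{P}$ to an identity $gX = g'X$ yields $g = g'$; the case $X' = X$ forces $g = 0$ (freeness). For \ref{item:action_3}: $T = \mathscr{P}^{-1} \circ {+} \circ (\mathrm{Id} \times \mathscr{P})$ is a composition of continuous maps ($\mathscr{P}$ and $\mathscr{P}^{-1}$ continuous by \autoref{thm:P_is_a_homeomorphism}), and in fact $T$ is locally Lipschitz since $\mathscr{P}$ is locally bi-Lipschitz.

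For \ref{item:action_4}, the approach rests on the following elementary principle: by the last identity in \eqref{eq:properties_of_lambda_I}, every $A \in \mathcal{C}_2$ with $\delta A \in \mathcal{C}_3^{\gamma}$ obeys $A = \delta \cI(A) + \Lambda(\delta A)$, and hence is determined by the pair $\big(\cI(A),\delta A\big)$ — even though $\Lambda$ is not canonical for $\gamma \le 1$. Applied to $A = \langle Z, \tau\rangle$ for $Z \in \mathrm{RP}^{\alpha}(H)$ (extended to a character of $H$ via \autoref{prop:char}) and $|\tau| \le N$, and noting that Chen's relation \eqref{X5} expresses $\delta\langle Z,\tau\rangle = \langle Z_{\cdot,\cdot} \otimes Z_{\cdot,\cdot}, \Delta^{\prime}\tau\rangle$ through $\langle Z,\cdot\rangle$ in degrees $< |\tau|$: the function $\langle Z,\tau\rangle$ is determined by $\cI(\langle Z,\tau\rangle)$ and lower-degree data; for $|\tau| > N$ it is uniquely determined by lower-degree data alone (\autoref{prop:char}, using uniqueness of the Sewing map for $\gamma>1$); and on products of lower-degree elements it is fixed by multiplicativity of characters.

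I would then put $Y := gX$, so that $\cI(\langle Y, h'\rangle) = g^{h'} + \cI(\langle X, h'\rangle)$ for $h' \in B_{\le N}$, with $g^{h'} = 0$ for $h' \ne h$, and run an induction on $n$ from $1$ to $|h| - 1$ proving $\langle Y, \sigma\rangle = \langle X, \sigma\rangle$ for all $\sigma \in H_n$: a generator $\sigma \in B_n$ has $\sigma \ne h$, hence $g^{\sigma} = 0$, so $\cI(\langle Y,\sigma\rangle) = \cI(\langle X,\sigma\rangle)$, while $\delta\langle Y,\sigma\rangle = \langle Y \otimes Y, \Delta^{\prime}\sigma\rangle = \langle X \otimes X, \Delta^{\prime}\sigma\rangle = \delta\langle X,\sigma\rangle$ by the inductive hypothesis; the principle above gives $\langle Y,\sigma\rangle = \langle X,\sigma\rangle$, extended to all of $H_n$ by multiplicativity. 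At degree $|h|$: since $\Delta^{\prime}h$ lives in degrees $< |h|$, we get $\delta\langle Y,h\rangle = \delta\langle X,h\rangle = \delta\big(\langle X,h\rangle + \delta g^h\big)$, and $\cI(\langle Y,h\rangle) = g^h + \cI(\langle X,h\rangle) = \cI\big(\langle X,h\rangle + \delta g^h\big)$, using $\cI(\delta g^h) = g^h$ (again from \eqref{eq:properties_of_lambda_I}, as $\delta\delta g^h = 0$ and $g^h_0 = 0$); hence $\langle Y,h\rangle$ and $\langle X,h\rangle + \delta g^h$ share both $\delta$ and $\cI$, so $\langle gX, h\rangle = \langle X, h\rangle + \delta g^h$. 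For the ``furthermore'' I would run the same induction over $\mathbb{R}[\tilde B]$, graded by $|\cdot|$: a generator $\tau \in \tilde B$ satisfies $\tau \ne h$, so $g^{\tau} = 0$ when $|\tau| \le N$, and the hypothesis $\Delta^{\prime}(\mathbb{R}[\tilde B]) \subset \mathbb{R}[\tilde B]^{\otimes 2}$ keeps $\Delta^{\prime}\tau$ within $\mathbb{R}[\tilde B]$ in lower degrees, so the argument above yields $\langle gX,\tau\rangle = \langle X,\tau\rangle$; multiplicativity finishes. (Equivalently, one may note that $\mathbb{R}[\tilde B]$ is a graded connected locally finite commutative sub-Hopf-algebra with generating set $\tilde B$, that restriction to it intertwines $\mathscr{P}$ with the corresponding map for $\mathbb{R}[\tilde B]$, and that $g$ has no component along $\tilde B$, so $gX$ and $X$ have the same restriction by injectivity of that map.)

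The main difficulty I anticipate is confined to \ref{item:action_4}, and it is essentially bookkeeping: one must run the degree induction in the correct order, and the conceptual point to keep in mind is that, although the Sewing map is \emph{not} unique for $\gamma \le 1$, the function $\langle Z,\tau\rangle$ is nonetheless pinned down \emph{once the H\"older datum $\cI(\langle Z,\tau\rangle)$ is prescribed as well} — precisely the datum that $\mathscr{P}$ records and that the action shifts only along $h$.
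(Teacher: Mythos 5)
Your proposal is correct and follows essentially the same route as the paper's own (very terse) proof: the paper simply records that items \textit{1.}--\textit{3.} are immediate from \eqref{eq:definition_action} and \autoref{thm:P_is_a_homeomorphism}, and that item \textit{4.} is obtained ``recursively by using the explicit definition of $T$ and the recursive construction of $\mathscr{P}^{-1}$'' — which is precisely the degree induction you carry out, keyed on the identity $A = \delta\cI(A) + \Lambda(\delta A)$ from \eqref{eq:properties_of_lambda_I} (the same ``general fact'' the paper invokes in Step~3 of the proof of \autoref{thm:P_is_a_homeomorphism}). Your write-up just makes the recursion and the uniqueness principle explicit, and supplies the small check $\cI(\delta g^h) = g^h$; no gap, no genuinely different idea.
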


\begin{proof}
Items \ref{item:action_1}, \ref{item:action_2}, \ref{item:action_3} are straightforward from \eqref{eq:definition_action} and \autoref{thm:P_is_a_homeomorphism}.
Also, one obtains \ref{item:action_4} recursively by using the explicit definition of $T$ in \eqref{eq:definition_action} and the recursive construction of $\mathscr{P}^{-1}$ from the proof of \autoref{thm:P_is_a_homeomorphism}.
\end{proof}

\begin{remark}[Link with \cite{MR4093955}]
When $H$ is the Butcher-Connes-Kreimer algebra, $\mathrm{RP}^{\alpha} \left( H \right)$ corresponds to the set of branched rough paths, also denoted $\mathrm{BRP}^{\alpha}$ in \cite{MR4093955}.
In that case, let $h \in H$ and $\tilde{B}$ be the set of trees not containing $h$.
By definition of the coproduct from admissible cuts, $\Delta^{\prime} \left( \mathbb{R} \left[ \tilde{B} \right] \right) \subset \mathbb{R} \left[ \tilde{B} \right] \otimes \mathbb{R} \left[ \tilde{B} \right]$ and \autoref{corollary:action_on_rough_paths} generalises \cite[Theorem~1.2]{MR4093955}.
\end{remark}

\subsection{Proof}
Now we turn to the proof of \autoref{thm:P_is_a_homeomorphism}.
\begin{proof}[Proof of \autoref{thm:P_is_a_homeomorphism}]
We organise this proof in five different steps.

\smallskip
 \textit{Step 1: $\mathscr{P}$ is well-defined.}
Indeed, if $X \in \mathrm{RP}^{\alpha} \left( H \right)$ and $h \in B_{\leq N}$, by \eqref{X4}-\eqref{X5} 
$\langle X , h \rangle\in\mathcal{C}^{|h|\alpha}_2$ and $\delta\langle X , h \rangle\in\mathcal{C}^{|h|\alpha}_3$, so that by \autoref{thm:weak_sewing_lemma_ordered} we obtain $\cI(\left\langle X , h \right\rangle)\in \mathcal{C}_1^{|h|\alpha}$.

\smallskip
 \textit{Step 2: continuity of $\mathscr{P}$.}
Let $X, Y \in \mathrm{RP}^{\alpha} \left( H \right)$, and $h \in B_{\leq N}$.
By linearity of $\cI$, $\left( \mathscr{P} \left( X \right) \right)^h - \left( \mathscr{P} \left( Y \right) \right)^h = \cI \left( \left\langle X - Y , h \right\rangle \right)$.
Now by continuity of $\cI$ (\autoref{thm:weak_sewing_lemma_ordered}),
	\begin{equation}
		\left\| \cI \left( \left\langle X - Y , h \right\rangle \right) \right\|_{\mathcal{C}_1^{| h | \alpha}} \lesssim \left\| \left\langle X - Y , h \right\rangle \right\|_{\mathcal{C}_2^{| h | \alpha}} + \left\| \delta \left\langle X - Y , h \right\rangle \right\|_{\mathcal{C}_3^{|h| \alpha}} .
	\end{equation}

Observe that $\left\| \delta \left\langle X - Y , h \right\rangle \right\|_{\mathcal{C}_3^{|h| \alpha}} \leq 3 \left\| \left\langle X - Y , h \right\rangle \right\|_{\mathcal{C}_2^{| h | \alpha}}$.
Also, by definition, one has $\left\| \left\langle X - Y , h \right\rangle \right\|_{\mathcal{C}_2^{| h | \alpha}} \leq d_{\mathrm{RP}^{\alpha} \left( H \right)} \left( X, Y \right)$, thus we obtain $d_{\mathscr{C}_B^{\alpha}} \left( \mathscr{P} \left( X \right), \mathscr{P} \left( Y \right) \right) \lesssim d_{\mathrm{RP}^{\alpha} \left( H \right)} \left( X, Y \right)$, which establishes the Lipschitz continuity of $\mathscr{P}$.

\smallskip
 \textit{Step 3: injectivity of $\mathscr{P}$.} In this step, we use the following general fact: if $A, \tilde{A} \in \mathcal{C}_2$ are such that $\delta A = \delta \tilde{A}$ and $\mathcal{I} ( A ) = \mathcal{I} ( \tilde{A} )$, then $A = \tilde{A}$.
Indeed, this follows immediately from the defining property \eqref{I1} of $\cI$: $$\delta\circ\cI={\rm id}-\Lambda\circ\delta.$$
Let $X, Y \in \mathrm{RP}^{\alpha}$ and assume $\mathscr{P} \left( X \right) = \mathscr{P} \left( Y \right)$.
That is, for all $h \in B$, $\mathcal{I} ( \left\langle X, h \right\rangle ) = \mathcal{I} ( \left\langle Y, h \right\rangle )$.
We prove by recurrence that for all $h \in B$, $\left\langle X, h \right\rangle = \left\langle Y , h \right\rangle$.
If $h \in B_1$, then by definition of the reduced coproduct $\Delta^{\prime}$ and Chen's relation, it holds that $\delta \left\langle X , h \right\rangle = \delta \left\langle Y , h \right\rangle = 0$ and thus from the remark just above, $\left\langle X, h \right\rangle = \left\langle Y, h \right\rangle$, whence the initialisation of the recurrence.
Now let $n \in \mathbb{N}$ and assume that $X$ and $Y$ coincide on $B_{\leq n}$.
Let $h \in B_{n + 1}$.
Once again using Chen's relation, the definition of the reduced coproduct, and the fact that $X$ and $Y$ are characters, one gets by the recurrence hypothesis that $\delta \left\langle X , h \right\rangle = \delta \left\langle Y , h \right\rangle$.
Applying again the remark just above, it follows that $\left\langle X, h \right\rangle = \left\langle Y, h \right\rangle$.
By recurrence, this yields the injectivity of $\mathscr{P}$.

\smallskip
 \textit{Step 4: surjectivity of $\mathscr{P}$.}
In this step, we construct a right-inverse $\mathscr{P}^{-1}$ of $\mathscr{P}$ (which is also an actual inverse thanks to the previous step).
For this purpose, we proceed recursively.
Fix $f \in \mathscr{C}_B^{\alpha}$.
We first define $X \coloneqq \mathscr{P}^{-1} \left( f \right)$ on $H_1$ by setting for $h \in B_1$: $\left\langle X, h \right\rangle \coloneqq f^h$, then extending $X$ to $H_1$ by linearity.
Now let $n \geq 1$ and assume that $X$ is defined on $H_{\leq n}$.
We start by defining $X$ on the monomials i.e.\ on $B_{n + 1}$.
Fix $h \in B_{n + 1}$ and let us show that there exists $\left\langle X, h \right\rangle \colon [0,T]^2 \to \mathbb{R}$ such that:
	\begin{enumerate}
		\item $\left| \left\langle X_{s, t}, h \right\rangle\right| \lesssim \left| t - s \right|^{\alpha \left( n + 1 \right)}$,
		\item $\left( \delta \left\langle X, h \right\rangle \right)_{s, u, t} = \left\langle X_{s, u} \otimes X_{u, t}, \Delta^{\prime} h \right\rangle$.
	\end{enumerate}
For this purpose, arguing as in the proof of \autoref{prop:char} we consider the function $F\in \mathcal{C}_2$ defined for $s, t \in [0,T]$ by:
	\begin{equation}
		F_{s, t} \coloneqq \left\langle X_{0, s} \otimes X_{s, t}, \Delta^{\prime} h \right\rangle.
	\end{equation}
	Then 
by the coassociativity of $\Delta^{\prime}$, the Chen relation and the recurrence hypothesis, 
for $s, u, t \in [0,T]$ we have $\left( \delta F \right)_{s, u, t} = -\left\langle X_{s, u} \otimes X_{u, t}, \Delta^{\prime} h \right\rangle$.
Also, from the definition of $\Delta^{\prime}$ and the analytic constraint \eqref{X4} on $(\langle X,\tau\rangle)_{\tau\in H_n}$, one observes that $\delta F \in \mathcal{C}_3^{\left( n + 1 \right) \alpha}$.
Then, from the properties of the Sewing map, it suffices to set:
	\begin{equation}\label{eq:recursive_definition_of_X_against_monomials}
		\left\langle X, h \right\rangle \coloneqq -\Lambda \left( \delta F \right) + f^h .
	\end{equation}

Note that \eqref{eq:recursive_definition_of_X_against_monomials} can be rewritten as $\cI \left( \left\langle X, h \right\rangle \right) = f^h$, i.e.\ $\mathscr{P} \left( X \right)^h = f^h$, as wanted for the construction of the inverse.
Now it remains to suitably extend $X$ to the whole of $H_{\leq n + 1}$, which we do \enquote{polynomially}: if $P$ is a polynomial, we set 
	\begin{equation}
		\left\langle X, P \left( \left( h \right)_{h \in B_{\leq n + 1}} \right) \right\rangle \coloneqq P \left( \left( \left\langle X, h \right\rangle \right)_{h \in B_{\leq n + 1}} \right) .
	\end{equation}
It is straightforward to observe that this correctly defines an element of $G_{n + 1}$ and enforces the estimate $\left| \left\langle X_{s, t}, \tau \right\rangle \right| \lesssim \left| t - s \right|^{\alpha \left( n + 1 \right)}$ for all $\tau \in H_{n + 1}$.
To conclude the recursive step, it suffices to establish Chen's relation on $H_{n + 1}$, knowing that it is satisfied on $H_n$ and on $B_{n + 1}$ (by the construction just above).
Observe that if Chen's relation is satisfied on $\tau$ and $\sigma$, then it is also satisfied on $\tau + \sigma$.
Now because we define $X$ polynomially, it suffices to prove that if Chen's relation is satisfied for $\sigma \in H_l$ and $\tau \in H_k$ where $k + l \leq n + 1$, $1 \leq k, l \leq n$, then it is also satisfied for $\sigma \tau$. For this, it is enough to prove that $\delta\left\langle X, \sigma \tau \right\rangle=
\delta \left( \left\langle X, \sigma \right\rangle\left\langle X, \tau \right\rangle \right)$. This can be done arguing as in \autoref{prop:char}. (While in \autoref{prop:char} we had to verify that the extension was multiplicative, here this property is enforced by the \enquote{polynomial} definition).
This concludes the recursive step, so that we have constructed $X \eqqcolon \mathscr{P}^{-1} \left( f \right) \in \mathrm{RP}^{\alpha} \left( H \right)$.

\smallskip
 \textit{Step 5: continuity of $\mathscr{P}^{-1}$.}
We first show that $\mathscr{P}^{-1}$ maps bounded sets to bounded sets.
For this purpose, let $C > 0$, we shall show that there exists $C^{\prime} > 0$ such that for all $f \in \mathscr{C}_B^{\alpha}$ with $\sum_{h \in B} \| f^h \|_{\mathcal{C}_1^{|h| \alpha}} \leq C$, one has $\sum_{h \in B} \left\| \left\langle \mathscr{P}^{-1} \left( f \right), h \right\rangle \right\|_{\mathcal{C}_2^{|h| \alpha}} \leq C^{\prime}$.
We proceed recursively: when $h \in B_1$, by construction of $\mathscr{P}^{-1}$, it holds that $\left\langle \mathscr{P}^{-1} \left( f \right), h \right\rangle = f^h$, and thus $\left\| \left\langle \mathscr{P}^{-1} \left( f \right), h \right\rangle \right\|_{\mathcal{C}_2^{|h| \alpha}} = \left\| f \right\|_{\mathcal{C}_1^{|h| \alpha}} \leq C$.
Now fix $n \geq 1$ and assume that for all $b \in B_{\leq n}$,  $\left\| \left\langle \mathscr{P}^{-1} \left( f \right) , b \right\rangle \right\|_{\mathcal{C}_2^{| b | \alpha}} \leq C^{\prime}$ for some constant $C^{\prime}$.
Let $h \in B_{n + 1}$, so that by the construction above:
	\begin{equation} \label{eq:recursive_construction_for_the_inverse_of_P}
		\left\langle \mathscr{P}^{-1} \left( f \right) , h \right\rangle = \Lambda \left( \delta \left\langle \mathscr{P}^{-1} \left( f \right), h \right\rangle \right) + f^h .
	\end{equation}
	
Using Chen's relation, we obtain a decomposition:
	\begin{equation}
		\delta \left\langle \mathscr{P}^{-1} \left( f \right), h \right\rangle_{s, u, t} = \sum\limits_{\substack{\sigma \in H_k, \tau \in H_l \\ k, l \geq 1, k + l = |h|}} \left\langle \mathscr{P}^{-1} \left( f \right)_{s, u} , \sigma \right\rangle \left\langle \mathscr{P}^{-1} \left( f \right)_{u, t}, \tau \right\rangle .
	\end{equation}

The definition of $B$ implies that for all $\sigma$, there exists a polynomial $P = P_{\sigma}$ such that $\sigma = P_{\sigma} ( B_{\leq | \sigma |} )$.
As a consequence, note that:
	\begin{equation}
		\left\langle \mathscr{P}^{-1} \left( f \right)_{s, u} , \sigma \right\rangle = \left| s - u \right|^{| \sigma | \alpha} P_{\sigma} \left( \left( \frac{\left\langle \mathscr{P}^{-1} \left( f \right)_{s, u} , b \right\rangle}{\left| s - u \right|^{| b | \alpha}} \right)_{b \in B_{\leq | \sigma |}} \right) .
	\end{equation}

This yields:	
	\begin{equation}
		\left\| \delta \left\langle \mathscr{P}^{-1} \left( f \right), h \right\rangle \right\|_{\mathcal{C}_3^{|h| \alpha}} \leq \sum\limits_{\substack{\sigma, \tau}} P_{\sigma} P_{\tau} \left( \left( \left\| \left\langle \mathscr{P}^{-1} \left( f \right), b \right\rangle \right\|_{\mathcal{C}_2^{|b| \alpha}} \right)_{b \in B_{\leq n}} \right) ,
	\end{equation}
 which is bounded by the recurrence hypothesis.
We conclude the recurrence step using \eqref{eq:recursive_construction_for_the_inverse_of_P} and the continuity of the Sewing map $\Lambda$, so that $\mathscr{P}^{-1}$ does indeed map bounded sets to bounded sets.

Now let us tackle the continuity of $\mathscr{P}^{-1}$.
We reason as above: we fix $f, g \in \mathscr{C}_B^{\alpha}$ and set
$\zeta:=\mathscr{P}^{-1} \left( f \right) - \mathscr{P}^{-1} \left( g \right)$.
We estimate recursively $\left\| \left\langle \zeta, h \right\rangle \right\|_{\mathcal{C}_2^{| h | \alpha}}$ for monomials $h \in B_{\leq N}$.
When $h \in B_1$, by construction of $\mathscr{P}^{-1}$, it holds that $\left\langle \zeta, h \right\rangle = f^h - g^h$, thus we have $$\left\| \left\langle \zeta, h \right\rangle \right\|_{\mathcal{C}_2^{| h | \alpha}} \leq d_{\mathscr{C}_B^{\alpha}} \left( f, g \right).$$
Fix $n \geq 1$, and $h \in B_{n + 1}$.
By the construction above, $\left\langle \zeta , h \right\rangle = \Lambda \left( \delta \left\langle \zeta, h \right\rangle \right) + f^h - g^h$.
Using Chen's relation and the notations above, one obtains a decomposition of the form, denoting $X \coloneqq \mathscr{P}^{-1} \left( f \right)$, $Y \coloneqq \mathscr{P}^{-1} \left( g \right)$:
	\begin{align}
		& \left| \frac{\delta \left\langle X - Y , h \right\rangle_{s, u, t}}{\left( \left| t - u \right| + \left| u - s \right| \right)^{\alpha}} \right| \leq \sum\limits_{\substack{\sigma, \tau}} \Bigg| P_{\sigma} \left( \left( \frac{\left\langle X_{s, u} , b \right\rangle}{\left| s - u \right|^{| b | \alpha}} \right)_{b} \right) P_{\tau} \left( \left( \frac{\left\langle X_{u, t} , b \right\rangle}{\left| t - u \right|^{| b | \alpha}} \right)_{b} \right) -\\
		& \quad \quad \quad \quad \quad \quad \quad \quad \quad \quad \quad - P_{\sigma} \left( \left( \frac{\left\langle Y_{s, u} , b \right\rangle}{\left| s - u \right|^{| b | \alpha}} \right)_{b} \right) P_{\tau} \left( \left( \frac{\left\langle Y_{u, t} , b \right\rangle}{\left| t - u \right|^{| b | \alpha}} \right)_{b} \right) \Bigg| .
	\end{align}
	
%	\begin{align}
%		& \left| \frac{\delta \left\langle X - Y , h \right\rangle_{s, u, t}}{\left( \left| t - u \right| + \left| u - s \right| \right)^{\alpha}} \right| \\
%		& \leq \sum\limits_{\substack{\sigma, \tau}} \left| P_{\sigma} \left( \left( \frac{\left\langle X_{s, u} , b \right\rangle}{\left| s - u \right|^{| b | \alpha}} \right)_{b} \right) - P_{\sigma} \left( \left( \frac{\left\langle Y_{s, u} , b \right\rangle}{\left| s - u \right|^{| b | \alpha}} \right)_{b} \right) \right| \left| P_{\tau} \left( \left( \frac{\left\langle X_{u, t} , b \right\rangle}{\left| t - u \right|^{| b | \alpha}} \right)_{b} \right) \right| \\
%		& \quad + \sum\limits_{\substack{\sigma, \tau}} \left| P_{\sigma} \left( \left( \frac{\left\langle X_{u, t} , b \right\rangle}{\left| t - u \right|^{| b | \alpha}} \right)_{b} \right) - P_{\sigma} \left( \left( \frac{\left\langle Y_{u, t} , b \right\rangle}{\left| t - u \right|^{| b | \alpha}} \right)_{b} \right) \right| \left| P_{\tau} \left( \left( \frac{\left\langle X_{s, u} , b \right\rangle}{\left| s - u \right|^{| b | \alpha}} \right)_{b} \right) \right| .
%	\end{align}

Using the fact that polynomials are locally Lipschitz, the fact established above that $\mathscr{P}^{-1}$ maps bounded sets to bounded sets, and the continuity of $\Lambda$, one propagates the locally Lipschitz bound over $h \in B_{n+1}$, so that the theorem is proved by recurrence.
\end{proof}

\section{Sewing versus Reconstruction}
\label{sec:link}

In this section, we discuss the link between the Sewing Lemma (\autoref{thm:usual_Sewing_lemma}, \autoref{thm:simple_formulation_of_weak_Sewing}) and the Reconstruction Theorem \cite[Theorem 3.10]{MR3274562}-\cite[Theorem~5.1]{caravenna2020hairers} in Regularity Structures. It is often claimed that the latter is a generalisation of the former. 
Here we want to test this claim in details, and show that it is correct only up to a point. We are going to see that the Sewing Lemma is
actually slightly stronger than the 1-dimensional version of the Reconstruction Theorem, see \autoref{weaker} below.

The take-home message is the following: if one wants to prove the Sewing Lemma (for any $\gamma>0$) via the Reconstruction Theorem,
then the \enquote{coherence} condition $\delta A\in\mathcal{C}_3^{\gamma}( \Delta_T^3 )$ is not enough, and one needs the \enquote{homogeneity} condition $A\in\mathcal{C}_2^{\beta}( \Delta_T^2 )$ for some $\beta>0$. The original Sewing Lemma on the
other hand holds with the coherence condition only. 

The reason for that is the following: given $A_{s,t}$, one defines the distribution $F_s:=\partial_t A_{s,\cdot}$ depending on the parameter $s$; the Reconstruction Theorem gives the existence of a distribution $f$ with a desired property; in order to obtain the integral $I=\cI(A)$,
one need now to find a "primitive" of $f$, and this is the point where one need the homogeneity condition $A\in\mathcal{C}_2^{\beta}( \Delta_T^2 )$. In other words, by \autoref{prop:Ih}, the Reconstruction Theorem yields the Sewing Lemma only in the case where the primitive
$I=\cI(A)$ belongs to a H\"older space $\mathcal{C}^\beta_1$.
The Sewing Lemma on the other hand does not need to differentiate $A$ and the final integration step with the
associated {homogeneity} condition is therefore unnecessary, see \autoref{weaker}.

\subsection{Reconstruction}
The Reconstruction Theorem is a result in analysis which was first established in the context of the theory of regularity structures \cite{MR3274562}.
It has later been revisited in \cite{caravenna2020hairers}, where it was stated and proved in a more elementary fashion, using only the language of distribution theory.

The Reconstruction Theorem in H\"older spaces is stated in the following way, see \cite{caravenna2020hairers} for a proof and a discussion of this result. For $r \in \mathbb{N}$ we define $\mathscr{B}^r \coloneqq \left\lbrace \varphi \in \mathcal{D} \left( B \left( 0, 1 \right) \right), \left\| \varphi \right\|_{\mathcal{C}^r} \leq 1 \right\rbrace$. For $\varphi \in \mathcal{D} ( \mathbb{R}^d )$, $x \in \mathbb{R}^d$, $\lambda > 0$, we denote $\varphi_x^{\lambda}$ the scaled and recentered version of $\varphi$, defined as follows: $\varphi_x^{\lambda} \left( \cdot \right) \coloneqq \lambda^{- d} \varphi \left( \lambda^{-1} \left( \cdot - x \right) \right)$.

\begin{theorem}[Reconstruction Theorem {\cite[Theorem~5.1]{caravenna2020hairers}}]\label{thm:reconstruction_theorem}
Let $\left( F_x \right)_{x \in \mathbb{R}^d}$ be a germ in the sense of \cite{caravenna2020hairers} i.e.\ for all $x$, $F_x \in \mathcal{D}^{\prime} ( \mathbb{R}^d )$ and for all $\varphi \in \mathcal{D} ( \mathbb{R}^d )$, $x \mapsto F_x \left( \varphi \right)$ is measurable.

Let $a, c \in \mathbb{R}$ with $a \leq 0 \wedge c$.
Assume there exists a test-function $\varphi \in \mathcal{D} ( \mathbb{R}^d )$ with $\int \varphi \neq 0$ such that for all compact $K \subset \mathbb{R}^d$:
	\begin{align}
		\left| \left( F_{x + h} - F_x \right) \left( \varphi_x^{\lambda} \right) \right| & \lesssim \lambda^{a} \left( \left| h \right| + \lambda \right)^{c - a} \label{eq:reconstruction_condition_coherence} , 
	\end{align}
uniformly over $x \in K$, $h \in B \left( 0, 1 \right)$, $\lambda \in \left( 0, 1 \right]$.	
Then there exists a distribution $\mathcal{R} (F) \in \mathcal{D} ( \mathbb{R}^d )$ such that for all compact $K \subset \mathbb{R}^d$ there exists an integer $r_K$ such that:
	\begin{equation}\label{eq:reconstruction_bound}
		\left| \left( R (F) - F_x \right) \left( \psi_x^{\lambda} \right) \right| \lesssim
			\begin{dcases}
		 		\lambda^{c} & \text{ if } c \neq 0 , \\
		 		1 + \left| \log | \lambda | \right| & \text{ if } c = 0 ,
		 	\end{dcases}
	\end{equation}
uniformly over $x \in K$, $\lambda \in \left( 0, 1 \right]$, $\psi \in \mathscr{B}^{r_K}$.

Furthermore, assume that there exists $b < 0 \wedge c$ and a test-function $\varphi \in \mathcal{D} ( \mathbb{R}^d )$ with $\int \varphi \neq 0$ such that for all compact $K \subset \mathbb{R}^d$:
	\begin{equation}
		\left| F_x \left( \varphi_x^{\lambda} \right) \right| \lesssim \lambda^{b} \label{eq:reconstruction_condition_homogeneity} ,
	\end{equation}	 
\noindent uniformly over $x \in K$, $\lambda \in \left( 0, 1 \right]$.
Then one can take $r_K = \max \left( - a, - b \right)$ in \eqref{eq:reconstruction_bound}, and it holds that $\mathcal{R} (F) \in \mathcal{C}_1^{b}$, in the sense that for all integer $s > - b$ and $K \subset \mathbb{R}^d$:
	\begin{equation}
		\left| R (F) \left( \psi_x^{\lambda} \right) \right| \lesssim \lambda^{b} ,
	\end{equation}
\noindent uniformly over $x \in K$, $\lambda \in \left( 0, 1 \right]$, $\psi \in \mathscr{B}^{s}$.
\end{theorem}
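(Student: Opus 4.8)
The plan is to adapt a mollifier-based argument in the spirit of \cite{caravenna2020hairers}. Since the hypothesis furnishes only \emph{one} good test function, I would first normalise it: after rescaling we may assume $\int \varphi = 1$, and I set $\varphi^{(n)}_x \coloneqq \varphi^{2^{-n}}_x$. The candidate reconstruction is obtained as a limit of the \emph{localised germ evaluations}
\[
g_n \colon x \longmapsto F_x\!\left( \varphi^{(n)}_x \right) ,
\]
each of which is locally bounded and measurable — joint measurability in $x$ follows from the germ assumption together with the continuity of $x \mapsto \varphi^{(n)}_x$ in $\mathcal{D}(\mathbb{R}^d)$ — hence defines a distribution via integration against test functions. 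The first task is to show that $(g_n)_n$ converges in $\mathcal{D}'(\mathbb{R}^d)$. The key is to rewrite the coarse bump as a superposition of finer ones: there is an elementary identity $\varphi^{(n)}_x = \int \varphi^{(n+1)}_y \, \kappa^{(n)}_x(y)\,dy$ with $\int \kappa^{(n)}_x = 1$ and $\kappa^{(n)}_x$ supported in a ball of radius $\lesssim 2^{-n}$ around $x$. Substituting and swapping the germ base point from $x$ to $y$ produces the coherence increment $(F_x - F_y)\!\left(\varphi^{(n+1)}_y\right)$, which by \eqref{eq:reconstruction_condition_coherence} with $|x-y| \lesssim 2^{-n}$ is $\lesssim (2^{-n})^{a}(2^{-n})^{c-a} = 2^{-nc}$; the remaining terms form a local average of $g_{n+1}$. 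After bookkeeping this gives, locally uniformly in the test function,
\[
\left| \left( g_{n+1} - g_n \right)(\psi) \right| \lesssim 2^{-nc} \quad \text{if } c \neq 0, \qquad \left| \left( g_{n+1} - g_n \right)(\psi) \right| \lesssim 1 \quad \text{if } c = 0 .
\]
When $c > 0$ this is summable and $\mathcal{R}(F) \coloneqq \lim_n g_n$ exists; when $c \leq 0$ one must instead organise the telescoping more carefully — subtracting suitable low-scale pieces, or equivalently making a non-canonical choice — which mirrors the loss of uniqueness in this regime, exactly as for the Sewing Lemma with $\gamma \leq 1$.

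Next I would prove the reconstruction bound \eqref{eq:reconstruction_bound}. Fix $x$, $\lambda \in (0,1]$ and $\psi$, pick $n$ with $2^{-n-1} < \lambda \leq 2^{-n}$, and split $\left(\mathcal{R}(F) - F_x\right)\!\left(\psi^\lambda_x\right) = \left(\mathcal{R}(F) - g_n\right)\!\left(\psi^\lambda_x\right) + \left(g_n - F_x\right)\!\left(\psi^\lambda_x\right)$. The first summand is bounded by summing the increment estimate above over scales $\geq n$, producing $\lambda^{c}$ (resp.\ $1 + |\log\lambda|$ when $c = 0$). For the second summand one unfolds $g_n(y) = F_y\!\left(\varphi^{(n)}_y\right)$ inside the integral against $\psi^\lambda_x$, once more swaps the base point from $y$ to $x$ using \eqref{eq:reconstruction_condition_coherence} (now with $|y - x| \lesssim \lambda$), and uses that $F_x$ tested against the average $\int \varphi^{(n)}_y \,\psi^\lambda_x(y)\,dy$ reproduces $F_x(\psi^\lambda_x)$ up to an error of the right order, because $\varphi$ has unit integral and $\psi$ is smooth; the exponents recombine into $\lambda^{a}\cdot\lambda^{c-a} = \lambda^{c}$. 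Keeping track of how many derivatives of $\psi$ are actually consumed when Taylor-expanding in these averaging steps yields the existence of the local order $r_K$.

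Finally, under the extra homogeneity hypothesis \eqref{eq:reconstruction_condition_homogeneity}: this bound directly controls $\|g_0\|$ at order $\lambda^{b}$, and combining it with the telescoped increments (of order $2^{-nc}$ with $c > b$) and summing gives $\left|\mathcal{R}(F)(\psi^\lambda_x)\right| \lesssim \lambda^{b}$, i.e.\ $\mathcal{R}(F) \in \mathcal{C}_1^{b}$; a precise count of derivatives in the coherence and homogeneity estimates shows $r_K = \max(-a, -b)$ suffices. I expect the genuine obstacle to be this last bookkeeping — transferring all estimates from the distinguished mollifier $\varphi$ and dyadic scales to \emph{arbitrary} $\psi \in \mathscr{B}^{r_K}$ and \emph{all} $\lambda \in (0,1]$ while keeping $r_K$ minimal, and simultaneously covering the borderline logarithmic case $c = 0$. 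This is where one really needs the multiresolution structure generated by $\varphi$ (or a Littlewood--Paley-type decomposition) rather than a single scale of mollification, and where the careful quantitative estimates of \cite{caravenna2020hairers} do the work.
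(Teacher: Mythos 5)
This statement is not proved in the paper: it is quoted (with minor re-formatting) from \cite[Theorem~5.1]{caravenna2020hairers}, and the paper only \emph{uses} it in Section~\ref{sec:link}. So there is no ``paper's own proof'' to compare against; I can only assess your sketch against the strategy of that reference, which you do recognisably follow: germs tested against multi-scale mollifiers, telescoping between adjacent dyadic scales, with the coherence condition supplying geometric decay.

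There is however a genuine gap in the central device you invoke. You claim an ``elementary identity'' $\varphi^{(n)}_x = \int \varphi^{(n+1)}_y \, \kappa^{(n)}_x(y)\,\mathrm{d}y$ with $\kappa^{(n)}_x$ a compactly supported, unit-mass smooth kernel. After a change of variables this reads $\varphi^{2^{-n}} = \varphi^{2^{-(n+1)}} \ast \kappa^{(n)}$, which is \emph{false} for a generic test function $\varphi$: on the Fourier side it would force $\hat\varphi(2^{-n}\xi)/\hat\varphi(2^{-(n+1)}\xi)$ to be the transform of a compactly supported smooth function, but $\hat\varphi$ typically has zeros and the quotient is not even well defined. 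In \cite{caravenna2020hairers} this is circumvented by two separate lemmas that your sketch elides. First, one builds a \emph{convolution-stable} multi-scale mollifier (an infinite convolution product of rescaled copies of a fixed $\rho$), for which the desired recursion between consecutive scales holds by construction rather than by fiat. Second, one proves a ``tweaking'' lemma showing that coherence tested against the single given $\varphi$ transfers, with a quantified loss of derivatives, to coherence tested against arbitrary $\psi \in \mathscr{B}^r$; this is precisely what produces the local order $r_K$ (and, under the extra homogeneity hypothesis, the explicit $r_K = \max(-a,-b)$). Your proposal also leaves the regime $c \le 0$, where the telescoped series diverges, as a ``non-canonical choice'' without saying how the partial sums are to be re-organised or which auxiliary distribution is subtracted; that is exactly the step that encodes the loss of uniqueness and requires a concrete construction. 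So the outline is right, but the two supporting lemmas that carry most of the weight of the proof in \cite{caravenna2020hairers} are missing.
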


Let us briefly comment on the relevance of this result.
The Reconstruction Theorem states that one can retrieve a distribution $\mathcal{R} (F)$ from a collection $\left( F_x \right)_{x \in \mathbb{R}^d}$ of \enquote{local approximations}, under a suitable assumptions of \enquote{coherence} \eqref{eq:reconstruction_condition_coherence} on $F$.
If we furthermore assume a condition of \enquote{homogeneity} \eqref{eq:reconstruction_condition_homogeneity} on $F$, then we have information on the regularity of $\mathcal{R} ( F )$ as a distribution.

The similarity with the Sewing Lemma is evident, and we will indeed show that these two results are intimately related to each other.
%In this section, we work with unordered times.
Let $0 < \gamma < 1$ and $A \colon [ 0, T ] \to \mathbb{R}$ be a continuous function satisfying
	\begin{equation}
		\left| A_{s, t} - A_{s, u} - A_{u, t} \right| \lesssim \left( | t - u | \vee | u - s | \right)^{\gamma} ,
	\end{equation}
\noindent uniformly over $s, u, t \in \left[ 0, T \right]$.
Let us try to construct a function $I$ satisfying the sewing bound
	\begin{equation} \label{eq:sewing_bound_for_unordered_times}
		| I_t - I_s - A_{s, t} | \lesssim | t - s |^{\gamma} ,
	\end{equation}
\noindent uniformly over $s, t \in \left[ 0, T \right]$, by invoking the Reconstruction Theorem. 
We first define a germ $F$ by differentiating $A$ in the second variable.
More precisely, we extend $A$ to $\mathbb{R}^2$ by setting for $s, t \in \mathbb{R}^2$, $A_{s, t} \coloneqq A_{p \left( s \right), p \left( t \right)}$, where $p \colon s \mapsto \max \left( 0, \min \left( s, T \right) \right)$; then we consider the germ defined for $s \in \mathbb{R}$ by $F_s \coloneqq \partial_t A_{s, \cdot}$, where the partial derivative is understood in the sense of distributions.
That is, for test-functions $\varphi \in \mathcal{D} \left( \mathbb{R} \right)$:
	\begin{equation}
		F_{s} \left( \varphi \right) = - \int_{\mathbb{R}} A_{s, t} \, \varphi^{\prime} (t) \d t .
	\end{equation}

Observe that $F$ satisfies the coherence condition \eqref{eq:reconstruction_condition_coherence} with parameters $a = -1, c = \gamma - 1$ because if $\varphi \in \mathcal{D} \left( \mathbb{R} \right)$ is \emph{any} test-function, $s, u, t \in \mathbb{R}$, $\lambda \in \left( 0, 1 \right]$, then (note that $p$ is $1$-Lipschitz):
			\begin{equation}\begin{split}
				\left| \left( F_t - F_s \right) \left( \varphi_s^{\lambda} \right) \right| & = \lambda^{-1} \left| \int \left( \delta A_{p \left( t \right), p \left( s \right), p \left( s + \lambda v \right)} \right) \varphi^{\prime} (v) \d v \right| 
				\\ & \lesssim \lambda^{-1} \left( \lambda+ \left| t - s \right| \right)^{\gamma} .
			\end{split}\end{equation}

Thus, we can apply \autoref{thm:reconstruction_theorem}, so that there exists $\mathcal{R} (F) \in \mathcal{D}^{\prime} ( \mathbb{R} )$ and integers $r_K$, such that for all compact $K \subset \mathbb{R}$:
	\begin{equation}\label{eq:reconstruction_bound_for_specific_germ}
		\sup\limits_{x \in K} \sup\limits_{\lambda \in \left( 0, 1 \right]} \sup\limits_{\psi \in \mathscr{B}^{r_K}} \frac{\left| \left( F_s - \mathcal{R} (F) \right) \left( \psi_s^{\lambda} \right) \right|}{\lambda^{\gamma - 1}} < + \infty .
	\end{equation}
	
Now in order to obtain the sewing bound \eqref{eq:sewing_bound_for_unordered_times} on $A$, we want to \enquote{integrate} the reconstruction bound \eqref{eq:reconstruction_bound_for_specific_germ} just above.

Heuristically, this \enquote{integration} can be performed by testing the reconstruction bound against indicator functions: notice that $( \mathds{1}_{[0,1]} )_s^{\lambda} = \lambda^{-1} \mathds{1}_{\left[ s, s + \lambda \right]}$, and if $I$ denotes a primitive of $\mathcal{R} ( F )$, one should have $\left( \mathcal{R} (F) - F_s \right) ( \mathds{1}_{\left[ s, s + \lambda \right]} ) = - \left( I - A \right)_{s, s + \lambda}$.
Taking $\psi \coloneqq \mathds{1}_{\left( 0, 1 \right)}$ in the reconstruction bound above would then yield $| \left( I - A \right)_{s, s + \lambda} | \lesssim \lambda^{\gamma}$, which is the expected sewing bound because $\left( I - A \right)_{s, t} = I_t - I_s - A_{s, t}$.

However, this heuristic argument fails for two reasons:
	\begin{enumerate}
		\item $\mathds{1}_{\left( 0, 1 \right)}$ is not a test-function (because it is not smooth) and thus cannot be plugged into the reconstruction bound,
		\item without further assumptions it is not clear in general whether $\mathcal{R} ( F )$ admits a primitive $I$ which is a function.
\end{enumerate}	

\subsection{Approximating an indicator function} 
We will solve the first point above by suitably approximating $\mathds{1}_{\left( 0, 1 \right)}$.
Specifically, we shall exploit the following decomposition, see \cite[Exercise~13.10]{MR4174393} for a similar statement. 
\begin{Lemma}[Dyadic approximation of indicator functions] \label{lemma:dyadic_decomposition}
There exist smooth functions $\varphi_n, \psi_n$, $n \in \mathbb{N}$ such that:
	\begin{enumerate}
		\item for all $n \in \mathbb{N}$, $\supp \left( \varphi_n \right) \subset \left[ \frac{1}{16} 2^{-n}, \frac{15}{16} 2^{-n} \right]$,
		\item for all $n \in \mathbb{N}$, $\supp \left( \psi_n \right) \subset \left[ 1 - \frac{15}{16} 2^{-n}, 1 - \frac{1}{16} 2^{- n} \right]$,
		\item for all $r \in \mathbb{N}$, $\sup\limits_{n \in \mathbb{N}} \sup\limits_{k \in \left\llbracket 0, r \right\rrbracket} \frac{\left\| D^k \varphi_n \right\|_{\infty} + \left\| D^k \psi_n \right\|_{\infty}}{2^{kn}} < + \infty$, 
		\item $\mathds{1}_{\left( 0, 1 \right)} = \sum_{n \geq 0} \left( \varphi_n + \psi_n \right)$.
	\end{enumerate}
\end{Lemma}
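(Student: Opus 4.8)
The plan is to build $\varphi_n$ and $\psi_n$ explicitly from a single smooth transition function enjoying a point symmetry about $\tfrac12$, so that the ``left'' pieces (concentrated near $0$) and the ``right'' pieces (concentrated near $1$) automatically glue to $\mathds{1}_{(0,1)}$ in the middle of the interval.

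First I would fix a nonnegative $\eta \in \mathcal{D}(\mathbb{R})$ with $\supp \eta \subset \left[ \tfrac18, \tfrac78 \right]$, $\int_{\mathbb{R}} \eta = 1$ and $\eta(1-x) = \eta(x)$ for all $x$ (such $\eta$ exists: rescale an even mollifier so that it lives on $\left[ \tfrac18, \tfrac78 \right]$), and set $G(x) \coloneqq \int_{-\infty}^x \eta(t)\,\mathrm{d}t$. Then $G$ is smooth, non-decreasing, $[0,1]$-valued, with $G \equiv 0$ on $\left( -\infty, \tfrac18 \right]$ and $G \equiv 1$ on $\left[ \tfrac78, \infty \right)$; moreover the change of variables $t \mapsto 1-t$ combined with $\eta(1-t)=\eta(t)$ yields the key identity $G(x) + G(1-x) = \int_{\mathbb{R}}\eta = 1$ for every $x$. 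I then define $\varphi_n(x) \coloneqq G(2^{n+1}x) - G(2^n x) = \int_{2^n x}^{2^{n+1}x}\eta(t)\,\mathrm{d}t$ and $\psi_n(x) \coloneqq \varphi_n(1-x)$.

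The support claims (1)--(2) are then immediate: $\int_{2^n x}^{2^{n+1}x}\eta$ vanishes unless $[2^n x, 2^{n+1}x]$ meets $\supp \eta \subset \left[\tfrac18,\tfrac78\right]$, which for $x > 0$ forces $\tfrac{1}{16}2^{-n} \leq x \leq \tfrac78 2^{-n}$, while $\varphi_n \equiv 0$ on $(-\infty,0]$; since $\left[\tfrac{1}{16}2^{-n}, \tfrac78 2^{-n}\right] \subset \left[\tfrac{1}{16}2^{-n}, \tfrac{15}{16}2^{-n}\right]$ this gives (1), and (2) follows by the reflection $x \mapsto 1-x$. For (3), differentiating gives $D^k \varphi_n(x) = 2^{(n+1)k}G^{(k)}(2^{n+1}x) - 2^{nk}G^{(k)}(2^n x)$, hence $\|D^k\varphi_n\|_{\infty} \leq (2^k+1)\,2^{kn}\,\|G^{(k)}\|_{\infty}$ with $\|G^{(k)}\|_{\infty} = \|\eta^{(k-1)}\|_{\infty} < \infty$ for $k \geq 1$ and $\|\varphi_n\|_{\infty}\leq 1$; reflection preserves sup-norms of derivatives, so the same holds for $\psi_n$, and taking the maximum over $k \in \llbracket 0, r\rrbracket$ proves (3).

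Finally, for the decomposition (4): for fixed $x$ only finitely many $\varphi_n(x)$ are nonzero (by the supports), and the telescoping $\sum_{n=0}^N \varphi_n(x) = G(2^{N+1}x) - G(x)$ together with $G(2^{N+1}x) \to \mathds{1}_{(0,\infty)}(x)$ gives $\sum_{n\geq 0}\varphi_n(x) = \mathds{1}_{(0,\infty)}(x) - G(x)$, hence $\sum_{n\geq 0}\varphi_n(x) = 1 - G(x)$ for $x \in (0,1)$; likewise $\sum_{n\geq 0}\psi_n(x) = 1 - G(1-x)$ for $x \in (0,1)$. Adding these and using $G(x)+G(1-x)=1$ shows $\sum_{n\geq 0}(\varphi_n + \psi_n) \equiv 1$ on $(0,1)$, whereas all terms vanish for $x \notin (0,1)$ (again by the supports), so the sum equals $\mathds{1}_{(0,1)}$ on all of $\mathbb{R}$. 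I expect the only genuinely delicate point to be precisely this matching of the left contribution $1-G$ near $0$ with the right contribution $1 - G(1-\cdot)$ near $1$ so that they sum to $1$ throughout the interval --- which is exactly what the symmetry $G(x)+G(1-x)=1$ is engineered to provide --- together with the bookkeeping of the endpoint behaviour in the telescoping (the appearance of $\mathds{1}_{(0,\infty)}$ rather than the constant $1$, and the vanishing outside $(0,1)$).
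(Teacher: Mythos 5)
Your proof is correct. It takes a route that is close in spirit to the paper's but differs in the construction of the dyadic pieces. The paper invokes the standard Littlewood--Paley partition of unity on the half-line — the existence of $\varphi\in\mathcal{C}_c^\infty([1/2,2])$ with $\mathds{1}_{\mathbb{R}_+^*}(x)=\sum_{n\in\mathbb{Z}}\varphi(2^n x)$, cited from \cite[Proposition~2.10]{MR2768550} — and then isolates a central bump $\eta \coloneqq \mathds{1}_{(0,1)}-\sum_{n\geq 3}\bigl(\varphi(2^n\cdot)+\varphi(2^n(1-\cdot))\bigr)$, which it checks to be a test function supported in $[1/16,15/16]$; the decomposition into $\varphi_n,\psi_n$ is then read off after re-indexing and putting $\eta$ into the $n=0$ slot. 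You instead build the dyadic family by hand as a telescoping difference $\varphi_n(x)=G(2^{n+1}x)-G(2^nx)$ of the cumulative transition function $G$, and the matching between the family near $0$ and the family near $1$ is not achieved by subtracting the tails from $\mathds{1}_{(0,1)}$ but is forced structurally by the symmetry $G(x)+G(1-x)=1$, itself engineered by choosing $\eta$ symmetric about $1/2$. Both arguments are valid; yours is more self-contained (it does not rely on the cited existence of a Littlewood--Paley partition of unity) and makes all the support, scaling, and gluing properties manifestly explicit, at the cost of being slightly longer. One incidental remark: your construction actually yields the slightly tighter support $\supp\varphi_n\subset[\tfrac{1}{16}2^{-n},\tfrac{7}{8}2^{-n}]$, which of course sits inside the required interval.
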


\begin{proof}
It is well known \cite[Proposition~2.10]{MR2768550} that there exists a function $\varphi \in \mathcal{C}_c^{\infty} ( [ 1/2, 2 ] )$ such that for all 
$x \in \mathbb{R}$, $\mathds{1}_{\mathbb{R}_+^{*}} \left( x \right) = \sum_{n \in \mathbb{Z}} \varphi \left( 2^n x \right)$.
For $x \in \mathbb{R}$, set $$\eta \left( x \right) \coloneqq \mathds{1}_{\left( 0, 1 \right)} \left( x \right) - \sum_{n \geq 3} \left( \varphi \left( 2^n x \right) + \varphi \left( 2^n \left( 1 - x \right) \right) \right).$$
Observe that $\eta$ is a test-function supported in $[ 1/{16} , 15/{16} ]$, whence the announced decomposition.
\end{proof}

As a consequence:
\begin{prop}[Approximation argument]
In the setting above, assume that there exists a continuous function $I$ such that $I_0 = 0$ and $I^{\prime} = \mathcal{R} (F)$ in the sense of distributions.
Set, for $n \in \mathbb{N}$, $s \in [0,T]$, $\lambda > 0$:
	\begin{equation}
		\Delta_{s, \lambda}^{N} \coloneqq \sum\limits_{n = 0}^N \left( \mathcal{R} (F) - F_s \right) \left( \lambda \left( \varphi_n + \psi_n \right)_s^{\lambda} \right) .
	\end{equation}
Then:
	\begin{enumerate}
		\item for all $s \in \mathbb{R}$, $\lambda > 0$, $\lim_{N \to \infty} \Delta_{s, \lambda}^N = \left( I - A \right)_{s, s + \lambda}$ ,
		\item For all compact $K \subset \mathbb{R}$, $| \Delta_{s, \lambda}^{N} | \lesssim \lambda^{\gamma}$ uniformly over $s \in K$, $N \in \mathbb{N}$, $\lambda \in \left( 0, 1 \right]$.
	\end{enumerate}		
\end{prop}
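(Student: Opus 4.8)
The plan is to establish the two items separately, starting with item~2 (which involves only $\mathcal{R}(F)$, $F$ and the bumps $\varphi_n,\psi_n$) and then passing to item~1 (which also uses $I$).

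\emph{Item~2.} Fix a compact $K$ and put $K':=K+[0,1]$, so that $s$, $s+\lambda$ and the supports of all the functions $\lambda(\varphi_n)^{\lambda}_s$, $\lambda(\psi_n)^{\lambda}_s$ lie in $K'$ whenever $s\in K$ and $\lambda\in(0,1]$; let $r:=r_{K'}$ be the integer provided by \autoref{thm:reconstruction_theorem}. Writing $\phi_N:=\sum_{n=0}^{N}\psi_n$, I would decompose by linearity
\[
\Delta^{N}_{s,\lambda}=\sum_{n=0}^{N}\big(\mathcal{R}(F)-F_s\big)\big(\lambda(\varphi_n)^{\lambda}_s\big)+\sum_{n=0}^{N}\big(\mathcal{R}(F)-F_{s+\lambda}\big)\big(\lambda(\psi_n)^{\lambda}_s\big)+\big(F_{s+\lambda}-F_s\big)\big(\lambda(\phi_N)^{\lambda}_s\big).
\]
In the first two sums each summand is handled at its natural scale $\mu_n:=\lambda 2^{-n}\in(0,1]$: by property~(3) of \autoref{lemma:dyadic_decomposition} one has $\lambda(\varphi_n)^{\lambda}_s=\mu_n\,\eta^{\mu_n}_{x}$ and $\lambda(\psi_n)^{\lambda}_s=\mu_n\,\tilde\eta^{\mu_n}_{y}$, where $x$ resp.\ $y$ is the centre of the (rescaled) support of the bump, so that $x,y\in K'$ with $|x-s|\le\mu_n$ and $|y-(s+\lambda)|\le\mu_n$, while $\eta,\tilde\eta$ are test functions whose $\mathcal{C}^{r}$ norms are bounded uniformly in $n$, hence bounded multiples of elements of $\mathscr{B}^{r}$. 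Inserting the germs $F_x$, $F_y$ and applying the reconstruction bound \eqref{eq:reconstruction_bound_for_specific_germ} together with the coherence estimate on $F$ established above (both at base points in $K'$, scale $\mu_n\le1$, and for these fixed test functions), every such summand is $\lesssim\mu_n\cdot\mu_n^{\gamma-1}=\lambda^{\gamma}2^{-n\gamma}$, so the two sums are $\lesssim\lambda^{\gamma}\sum_{n\ge0}2^{-n\gamma}\lesssim\lambda^{\gamma}$, uniformly in $N$, $s$, $\lambda$. For the remaining term I would use $F_x(\zeta)=-\int A_{x,t}\zeta'(t)\d t$ and the change of variable $t=s+\lambda v$:
\[
\big(F_{s+\lambda}-F_s\big)\big(\lambda(\phi_N)^{\lambda}_s\big)=-\int_{\mathbb{R}}\big(A_{s+\lambda,\,s+\lambda v}-A_{s,\,s+\lambda v}\big)\,\phi_N'(v)\,\d v.
\]
Since $\phi_N$ is compactly supported, $\int\phi_N'=0$, so one may subtract the value of the integrand at $v=1$; the identity
\[
\big(A_{s+\lambda,\,s+\lambda v}-A_{s+\lambda,\,s+\lambda}\big)-\big(A_{s,\,s+\lambda v}-A_{s,\,s+\lambda}\big)=\delta A_{s+\lambda,\,s,\,s+\lambda v}-\delta A_{s+\lambda,\,s,\,s+\lambda}
\]
then shows that this bracket is $\lesssim\lambda^{\gamma}$ for $v\in[0,1]$ by the coherence hypothesis on $A$. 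As $\supp\phi_N'\subset[0,1]$ and the total variation $\|\phi_N'\|_{L^{1}}$ is bounded uniformly in $N$ (a Littlewood--Paley--type feature of the decomposition, visible on its explicit construction), the last term is $\lesssim\lambda^{\gamma}$ as well, which proves item~2.

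\emph{Item~1.} Now I would exploit the assumption $I'=\mathcal{R}(F)$: for a test function $\zeta$ one has $\mathcal{R}(F)(\zeta)=-\int I_t\zeta'(t)\d t$ and $F_s(\zeta)=-\int A_{s,t}\zeta'(t)\d t$, hence, with $g_N:=\sum_{n=0}^{N}(\varphi_n+\psi_n)$ and $t=s+\lambda v$,
\[
\Delta^{N}_{s,\lambda}=\big(\mathcal{R}(F)-F_s\big)\big(\lambda(g_N)^{\lambda}_s\big)=-\int_{\mathbb{R}}H(v)\,g_N'(v)\,\d v,\qquad H(v):=I_{s+\lambda v}-A_{s,\,s+\lambda v},
\]
a continuous function with $H(0)=I_s-A_{s,s}=I_s$ (recall $A_{s,s}=0$) and $H(1)=I_{s+\lambda}-A_{s,s+\lambda}$. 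By property~(4) of \autoref{lemma:dyadic_decomposition}, $g_N=\mathds{1}_{(0,1)}-\rho_N$ with $\rho_N:=\sum_{n>N}(\varphi_n+\psi_n)$ supported in $(0,\varepsilon_N)\cup(1-\varepsilon_N,1)$ for a suitable $\varepsilon_N\to0$; consequently $g_N$ is smooth, equals $1$ on $(\varepsilon_N,1-\varepsilon_N)$, vanishes off $(0,1)$, $\int g_N'=0$, and $\int_0^{\varepsilon_N}g_N'=1=-\int_{1-\varepsilon_N}^{1}g_N'$. Splitting $\int H g_N'$ over $(0,\varepsilon_N]$ and $[1-\varepsilon_N,1)$, subtracting $H(0)$ on the first piece and $H(1)$ on the second, and using once more that the transition parts of $g_N$ have total variation bounded uniformly in $N$, the two error terms are dominated by the modulus of continuity of $H$ at $\varepsilon_N$ and hence vanish as $N\to\infty$, so that
\[
\Delta^{N}_{s,\lambda}=H(1)-H(0)+o_{N\to\infty}(1)\ \longrightarrow\ H(1)-H(0)=(I-A)_{s,s+\lambda},
\]
which is item~1.

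The main obstacle is the bound in item~2 for the part of the dyadic decomposition localised near $s+\lambda$. Estimating $\sum_n(\mathcal{R}(F)-F_s)(\lambda(\psi_n)^{\lambda}_s)$ term by term by comparing $F_s$ with a germ based near $s+\lambda$ costs, by coherence, a factor $\mu_n^{-1}(\mu_n+|(s+\lambda)-s|)^{\gamma}\sim\mu_n^{-1}\lambda^{\gamma}$, i.e.\ $\lambda^{\gamma}$ per summand and $(N+1)\lambda^{\gamma}$ in total --- which is useless. The remedy, used above, is to compare instead with the \emph{fixed} germ $F_{s+\lambda}$, so that the localised contributions sum geometrically, and to keep the leftover $(F_{s+\lambda}-F_s)(\lambda(\phi_N)^{\lambda}_s)$ as a single term: because $\phi_N=\sum_{n\le N}\psi_n$ is compactly supported, the \enquote{boundary} values of $A$ at $s$ and at $s+\lambda$ cancel, leaving a genuine $O(\lambda^{\gamma})$ remainder. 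The only ingredient beyond properties~(1)--(4) of \autoref{lemma:dyadic_decomposition} is the uniform total-variation bound on its dyadic partial sums and tails, needed both here and for item~1.
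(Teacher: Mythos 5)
Your proof is correct and follows essentially the same route as the paper's: both split $\Delta^N_{s,\lambda}$ into the two reconstruction sums based at $s$ and at $s+\lambda$ (summing geometrically) plus the remainder $(F_{s+\lambda}-F_s)\bigl(\lambda(\phi_N)^\lambda_s\bigr)$, which is rewritten through $\delta A$ and controlled by the coherence of $A$ together with a uniform total-variation bound on the dyadic partial sums, and both prove item~1 by concentrating $g_N'$ at the endpoints and using continuity. The only differences are cosmetic (you re-base the bumps at the midpoint of their supports and compare $F_x$ to $F_s$ by coherence, whereas the paper re-bases directly at $s$ and $s+\lambda$; you invoke the modulus of continuity of $H$ where the paper uses dominated convergence), and the uniform bound on $\|\phi_N'\|_{L^1}$ that you rightly flag as the key extra ingredient is exactly what the paper establishes in detail at the end of its proof.
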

\begin{proof}
Let us establish those points separately.
On the one hand, since for $s \in \mathbb{R}$, $\mathcal{R} (F) - F_s = \left( I_{\cdot} - A_{s, \cdot} \right)^{\prime}$, we have for $N \in \mathbb{N}$:
			\begin{align}
				\Delta_{s, \lambda}^N & = - \int \left( I_u - A_{s, u} \right) \sum\limits_{n = 0}^N \left( \left( \varphi_n + \psi_n \right)^{\prime} \right)_s^{\lambda} \left( u \right) du .
			\end{align}
		 By construction of $\varphi_n$ and $\psi_n$, for all $x \in [ 2^{- N}, 1 - 2^{-N} ]$ one has $\sum_{n = 0}^N \left( \varphi_n \left( x \right) + \psi_n \left( x \right) \right) = 1$.
		 Hence, setting for $N \in \mathbb{N}$ and $u \in \mathbb{R}$:
			\begin{equation}
				\begin{dcases}
					\eta_N \left( u \right) & \coloneqq 2^{- N} \sum\limits_{n = 0}^N \left( \varphi_n + \psi_n \right)^{\prime} \left( 2^{-N} u \right) \mathds{1}_{[0,T]} \left( u \right) , \\
					\tilde{\eta}_N (v) & \coloneqq - 2^{- N} \sum\limits_{n = 0}^N \left( \varphi_n + \psi_n \right)^{\prime} \left( 1 + 2^{-N} u \right) \mathds{1}_{\left[ -1, 0 \right]} \left( u \right) ,
				\end{dcases}
			\end{equation}
		
		\noindent then it follows that $\eta_N, \tilde{\eta}_N \in \mathcal{D} \left( B \left( 0, 1 \right) \right)$, and $\sum_{n = 0}^N \left( \varphi_n + \psi_n \right)^{\prime} = \left( \eta_N \right)_{0}^{2^{-N}} - \left( \tilde{\eta}_N \right)_{1}^{2^{-N}}$.
		Note that $\left\| \tilde{\eta}_N \right\|_{L^{\infty}}, \left\| \eta_N \right\|_{L^{\infty}} \lesssim 1$ and $\int_{\mathbb{R}} \eta_N \left( u \right) d u = \int_{\mathbb{R}} \tilde{\eta}_N \left( u \right) d u = 1$.
		Hence:
			\begin{align}
				\Delta_{s, \lambda}^N & = - \int_{} I_{s + \lambda 2^{- N} u} \eta_N \left( u \right) d u
				 + \int_{} A_{s, s + \lambda 2^{- N} u} \eta_N \left( u \right) d u \\
				& \quad + \int_{} I_{s + \lambda + \lambda 2^{- N} u} \tilde{\eta}_N \left( u \right) d u 
				 - \int_{} A_{s, s + \lambda + \lambda 2^{- N} u} \tilde{\eta}_N \left( u \right) d u.
			\end{align}
			
		Let us treat the first term.
		We write:
			\begin{equation} \label{eq:mollification_with_dominated_convergence}
				\int_{} I_{s + \lambda 2^{- N} u} \eta_N \left( u \right) d u = I_{s} + \int_{} \left( I_{s + \lambda 2^{- N} u} - I_s \right) \eta_N \left( u \right) d u . 
			\end{equation}
			
		Because $I$ is continuous, the integrand in the right-hand side converges pointwise to $0$ and is bounded by a constant.
		By the dominated convergence theorem, it follows that the sequence of integrals in \eqref{eq:mollification_with_dominated_convergence} converges to $0$.
		Reasoning similarly for the other terms, one obtains as announced that $\Delta_{s, \lambda}^N = I_{s + \lambda} - I_s - A_{s, s + \lambda} + o_{N \to + \infty} \left( 1 \right)$.		
	
Now let us bound $| \Delta_{s, \lambda}^N |$.
For $n \in \mathbb{N}$ and $x \in \mathbb{R}$, set:
			\begin{equation}
				\eta_n \left( x \right) \coloneqq \varphi_n \left( 2^{-n} x \right) , \quad 
				\tilde{\eta}_n \left( x \right) \coloneqq \psi_n \left( 2^{-n} x + 1 \right) .
			\end{equation}
		Then $\left( \varphi_n \right)_s^{\lambda} = 2^{-n} \left( \eta_n \right)_{s}^{2^{-n} \lambda}$, and $\left( \psi_n \right)_s^{\lambda} = 2^{-n} \left( \tilde{\eta}_n \right)_{s + \lambda}^{2^{-n} \lambda}$.
		Also, note that the quantity $C_K \coloneqq \sup_{n \in \mathbb{N}} \left( \left\| \eta_n \right\|_{\mathcal{C}^{r_K}} + \left\| \tilde{\eta}_n \right\|_{\mathcal{C}^{r_K}} \right)$ is finite, so that $\eta_n / C_K, \tilde{\eta}_n / C_K \in \mathscr{B}^2$.
		Then we have the decomposition:
			\begin{align}
				\Delta_{s, \lambda}^N & = \underbrace{\sum\limits_{n = 0}^N \left( \mathcal{R} (F) - F_s \right) \left( \lambda 2^{-n} \left( \eta_n \right)_s^{\lambda 2^{- n}} \right)}_{\eqqcolon \Delta_{s, \lambda}^{N; 1}} \\ & \quad + \underbrace{\sum\limits_{n = 0}^N \left( \mathcal{R} (F) - F_{s + \lambda} \right) \left( \lambda 2^{- n} \left( \tilde{\eta}_n \right)_{s + \lambda}^{\lambda 2^{-n}} \right)}_{\eqqcolon \Delta_{s, \lambda}^{N; 2}} \\
				& \quad + \underbrace{\sum\limits_{n = 0}^N \left( F_{s + \lambda} - F_s \right) \left( \lambda 2^{-n} \left( \tilde{\eta}_n \right)_{s + \lambda}^{\lambda 2^{- n}} \right)}_{\eqqcolon \Delta_{s, \lambda}^{N; 3}} .
			\end{align}
		Using the reconstruction bound \eqref{eq:reconstruction_bound_for_specific_germ} on the $1$-enlargement of $K$, one has $\Delta_{s, \lambda}^{N; 1} \lesssim \sum_{n = 0}^N \lambda 2^{-n} \left( \lambda 2^{- n} \right)^{\gamma - 1} \lesssim \lambda^{\gamma}$.
		Similarly, $\Delta_{s, \lambda}^{N; 2} \lesssim \lambda^{\gamma}$, so that it only remains to treat $\Delta_{s, \lambda}^{N; 3}$.
		For this purpose, we rewrite:
			\begin{equation}
				\Delta_{s, \lambda}^{N; 3} =  - \lambda \int_{\mathbb{R}} \delta A_{s + \lambda, s, u} \sum\limits_{n = 0}^N 2^{- n} \left( \left( \tilde{\eta}_n \right)_{s + \lambda}^{\lambda 2^{- n}} \right)^{\prime} \left( u \right) d u .
			\end{equation}
		Note that $\supp \left( \left( \tilde{\eta}_n \right)_{s + \lambda}^{\lambda 2^{- n}} \right) \subset \left[ s + \lambda, s + \lambda + 2^{- n} \lambda \right]$, and thus:
			\begin{equation}
				\left| \Delta_{s, \lambda}^{N; 3} \right| \leq \left( \sup\limits_{u \in \left[ s + \lambda, s + 2 \lambda \right]} \left| \delta A_{s + \lambda, s, u} \right| \right) \int_{\mathbb{R}} \left| \sum\limits_{n = 0}^N \lambda 2^{- n} \left( \left( \tilde{\eta}_n \right)_{s + \lambda}^{\lambda 2^{- n}} \right)^{\prime} \left( u \right) \right| d u .
			\end{equation}	
		Now, the hypothesis on $A$ implies that for $u \in \left[ s + \lambda, s + 2 \lambda \right]$, we have $\left| \delta A_{s + \lambda, s, u} \right| \lesssim \left( \left| u - s \right| + \left| s - \left( s + \lambda \right) \right| \right)^{\gamma} \lesssim \lambda^{\gamma}$.
		Now it remains to establish that the sequence of integrals above is bounded.
		We write:
			\begin{align}
				\int_{\mathbb{R}} \left| \sum\limits_{n = 0}^N \lambda 2^{- n} \left( \left( \tilde{\eta}_n \right)_{s + \lambda}^{\lambda 2^{- n}} \right)^{\prime} \left( u \right) \right| d u = \int_{\mathbb{R}} \left| f_{N}^{\prime} (v) \right| d v .
			\end{align}
where we set $f_N \colon v \mapsto \sum_{n = 0}^N \psi_n \left( 1 - v \right)$.
		Note by definition of $\psi$ that $\supp \left( f_N \right) \subset [0,T]$, and that $f_N$ converges pointwise to $f \coloneqq \sum_{n = 0}^{+ \infty} \psi_n \left( 1 - v \right)$.
		From the definition of $\varphi_n, \psi_n$, one has $f = \mathds{1}_{\left( 0, 1 \right)} - \sum_{n = 0}^{+ \infty} \varphi_n \left( 1 - \cdot \right)$ so that from the properties of the supports of $\varphi_n, \psi_n$, one has $f \mathds{1}_{\left( 0, \frac{1}{16} \right)} = \mathds{1}_{\left( 0, \frac{1}{16} \right)}$, and thus $\left\| f^{\prime} \right\|_{L^{\infty} \left( 0, 1 \right)} < + \infty$.			
		Also, by construction, $\supp \left( \psi_n \left( 1 - \cdot \right) \right) \subset \left[ \frac{1}{16} 2^{- n}, \frac{15}{16} 2^{-n} \right]$.	
		Thus:
			\begin{align}
				\int_{\mathbb{R}} \left| f_N^{\prime} (v) \right| d v & = \int_{0}^{2^{-N}} \sum\limits_{n = N - 4}^N \left| \psi_n^{\prime} \left( 1 - v \right) \right| d v + \int_{2^{-N}}^{1} \left| f^{\prime} (v) \right| d v \\
				& \leq 5 \sup\limits_{n \in \mathbb{N}} \left( 2^{- n} \left\| \psi_n \right\|_{\mathcal{C}^1} \right) + \left\| f^{\prime} \right\|_{L^{\infty} \left( 0, 1 \right)} \lesssim 1 .
			\end{align}
		Thus we have established that $\left| \Delta_{s, \lambda}^{N} \right| \lesssim \lambda^{\gamma}$.		
This concludes the proof.
\end{proof}

Recall that the calculations above assume the existence of a continuous primitive $I$ of $\mathcal{R} (F)$, which is far from clear in general.
However, assume now that there exists $0 < \beta < 1 \wedge \gamma$ such that $A$ furthermore satisfies:
	\begin{equation}
		\left| A_{s, t} \right| \lesssim \left| t - s \right|^{\beta} ,
	\end{equation}

\noindent uniformly over $s, t \in \left[ 0, T \right]$.
Then, note that $F$ satisfies the homogeneity condition \eqref{eq:sewing_condition_homogeneity} with parameter $b = \beta - 1$, because if $\varphi \in \mathcal{D} \left( \mathbb{R} \right)$ is any test-function, $s, t \in \mathbb{R}$, $\lambda \in \left( 0, 1 \right]$, then:
			\begin{align}
				\left| F_s \left( \varphi_s^{\lambda} \right) \right| &= \lambda^{-1} \left|  A_{p \left( s \right), p \left( s + \lambda v \right)} \varphi^{\prime} (v) dv \right| \lesssim \lambda^{\beta - 1}.
			\end{align}

Thus, the Reconstruction \autoref{thm:reconstruction_theorem} asserts that $\mathcal{R} ( F ) \in \mathcal{C}_1^{\beta - 1}$, and it is then a well-known fact that there exists a function $I \in \mathcal{C}_1^{\beta}$ such that $I_0 = 0$ and $I^{\prime} = \mathcal{R} ( F )$.
\begin{remark}
A rigorous proof of the latter fact can be found for example in \cite[Lemma~3.10]{MR4291780} where the author uses wavelets, but let us briefly and informally present an alternative approach in the spirit of the calculations above.
Indeed, one wishes to set $I_t \coloneqq \mathcal{R} \left( F \right) ( \mathds{1}_{( 0 \wedge t, 0 \vee t]} )$, which is not possible because the indicator function is not a test-function.
However, recalling \autoref{lemma:dyadic_decomposition}, one can define for $t \geq 0$:
	\begin{equation}
		I_t \coloneqq \sum\limits_{n \geq 0} \mathcal{R} \left( F \right) \left( t \left( (\varphi_n)^{(t)} + (\psi_n)^{(t)} \right) \right) ,
	\end{equation}

\noindent where the sum is absolutely convergent because the fact that $\mathcal{R} ( F ) \in \mathcal{C}_1^{\beta - 1}$ and the properties of $\varphi_n$ and $\psi_n$ imply that the absolute value of the summand is bounded by a constant times $2^{- n \beta}$, which is summable because $\beta > 0$.
Now one can verify that indeed $I \in \mathcal{C}_1^{\beta}$ and that $I^{\prime} = \mathcal{R} \left( F \right)$.
\end{remark}

Hence, one retrieves the following weaker version of the Sewing Lemma (\autoref{thm:usual_Sewing_lemma}, \autoref{thm:simple_formulation_of_weak_Sewing}) above. Note that the same arguments can be adapted to the case $\gamma = 1$.
\begin{corollary}[Sewing via the Reconstruction {\autoref{thm:reconstruction_theorem}}]\label{thm:weak_sewing_with_homogeneity}
Let $\beta, \gamma > 0$ with $\beta < 1$, and let $A \colon [0,T]^2 \to \mathbb{R}$ be a function satisfying:
		\begin{align}
				\left| A_{s, t} - A_{s, u} - A_{u, t} \right| & \lesssim \left( \left| t - u \right| \vee \left| u - s \right| \right)^{\gamma} \label{eq:sewing_condition_coherence} , \\
				\left| A_{s, t} \right|& \lesssim \left| t - s \right|^{\beta} \label{eq:sewing_condition_homogeneity} ,
		\end{align}

\noindent uniformly over $s, u, t \in [0,T]$.
Then there exists a function $I \in \mathcal{C}_1^{\beta}$ such that:
	\begin{equation}
		| I_t - I_s - A_{s, t} | \lesssim 
			\begin{dcases}
				\left| t - s \right|^{\gamma} & \text{ if } \gamma \neq 1 , \\
				|t-s|\left( 1+\left| \log |t-s| \right| \right)  & \text{ if } \gamma = 1 ,
			\end{dcases}
	\end{equation}
\noindent uniformly over $s, t \in [0,T]$
\end{corollary}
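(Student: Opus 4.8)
The plan is to assemble the ingredients developed in this section and reduce the statement to the Reconstruction \autoref{thm:reconstruction_theorem}; the relevant range is $\beta < 1 \wedge \gamma$, since the homogeneity bound \eqref{eq:sewing_condition_homogeneity} for a given exponent implies it for any smaller one.

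\textbf{Step 1: build and control the germ.} I would extend $A$ to $\mathbb{R}^2$ by $A_{s,t} := A_{p(s),p(t)}$, where $p(x) := \max(0, \min(x, T))$ is $1$-Lipschitz, and set $F_s := \partial_t A_{s, \cdot}$ in the sense of distributions, so that $F_s(\varphi) = -\int_{\mathbb{R}} A_{s,t}\, \varphi'(t)\, dt$. As computed above, for any fixed test-function $\varphi$ the coherence hypothesis \eqref{eq:sewing_condition_coherence} yields the reconstruction coherence condition \eqref{eq:reconstruction_condition_coherence} with $a = -1$ and $c = \gamma - 1$, while the homogeneity hypothesis \eqref{eq:sewing_condition_homogeneity} yields the reconstruction homogeneity condition \eqref{eq:reconstruction_condition_homogeneity} with $b = \beta - 1 < 0 \wedge c$.

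\textbf{Step 2: reconstruct and integrate.} Apply \autoref{thm:reconstruction_theorem}: it produces a distribution $\mathcal{R}(F) \in \mathcal{D}'(\mathbb{R})$ satisfying the reconstruction bound \eqref{eq:reconstruction_bound} with exponent $c = \gamma - 1$ (the logarithmic correction occurring exactly when $c = 0$, i.e.\ $\gamma = 1$) and, thanks to the homogeneity input, the regularity $\mathcal{R}(F) \in \mathcal{C}_1^{\beta - 1}$. Since $\beta > 0$, the remark above — summing $\mathcal{R}(F)$ against the dyadic decomposition of $\mathds{1}_{(0, t]}$ provided by \autoref{lemma:dyadic_decomposition} — furnishes a function $I \in \mathcal{C}_1^\beta$ with $I_0 = 0$ and $I' = \mathcal{R}(F)$ in the sense of distributions. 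This is precisely the step that uses \eqref{eq:sewing_condition_homogeneity}.

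\textbf{Step 3: integrate the reconstruction bound, and conclude.} With this $I$ at hand, apply the Approximation argument proposition above: it gives $\Delta_{s,\lambda}^N \to (I - A)_{s, s+\lambda} = I_{s+\lambda} - I_s - A_{s, s+\lambda}$ as $N \to \infty$, together with $|\Delta_{s,\lambda}^N| \lesssim \lambda^\gamma$ (resp.\ $\lesssim \lambda(1 + |\log\lambda|)$ when $\gamma = 1$) uniformly in $N$, in $\lambda \in (0,1]$, and in $s$ over compacts. Letting $N \to \infty$ yields the sewing bound for $s < t$; the case $s = t$ is trivial since $A_{s,s} = 0$, and the case $t < s$ follows from $A_{s,t} + A_{t,s} = -\delta A_{s,t,s} = O(|t-s|^\gamma)$ by \eqref{eq:sewing_condition_coherence}. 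For $\gamma = 1$, in the proof of that proposition the estimates on $\Delta_{s,\lambda}^{N;1}, \Delta_{s,\lambda}^{N;2}$ become $\sum_{n=0}^N \lambda 2^{-n}(1 + |\log(\lambda 2^{-n})|) \lesssim \lambda(1 + |\log\lambda|)$ while $\Delta_{s,\lambda}^{N;3}$ is estimated verbatim; for $\gamma > 1$ one has $c = \gamma - 1 > 0$ and every estimate goes through unchanged. The main obstacle is conceptual rather than computational: the Reconstruction Theorem only outputs a distribution, and without the homogeneity assumption there is in general no function $I$ with $I' = \mathcal{R}(F)$, which is exactly why \eqref{eq:sewing_condition_homogeneity} cannot be dropped here — in contrast with \autoref{thm:simple_formulation_of_weak_Sewing}, which needs only coherence. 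On the technical side, the one delicate point, already handled in the proof of the Approximation argument proposition, is the $N$-uniform bound for the integrals $\int_{\mathbb{R}} |f_N'(v)|\, dv$ entering the estimate of the ``non-reconstruction'' term $\Delta_{s,\lambda}^{N;3}$, which rests on the support and derivative bounds for $\varphi_n, \psi_n$ in \autoref{lemma:dyadic_decomposition}.
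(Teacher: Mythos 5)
Your proposal follows the paper's own argument step for step: same germ $F_s=\partial_t A_{s,\cdot}$ with the same exponents $a=-1$, $c=\gamma-1$, $b=\beta-1$, same application of the Reconstruction Theorem, same use of \autoref{lemma:dyadic_decomposition} (via the remark) to build the primitive $I\in\mathcal{C}_1^\beta$, and same conclusion via the Approximation argument proposition. The only (welcome) additions are that you make explicit how the estimates on $\Delta^{N;1},\Delta^{N;2}$ carry over when $\gamma=1$ and $\gamma>1$, and how to pass from ordered to unordered times via $A_{s,t}+A_{t,s}=-\delta A_{s,t,s}$; note also that, like the paper, your reduction to $\beta<1\wedge\gamma$ tacitly weakens the claimed $\mathcal{C}_1^\beta$-membership of $I$ to $\mathcal{C}_1^{\beta'}$ when $\beta\geq\gamma$, so the Reconstruction route only literally covers $\beta<1\wedge\gamma$ — exactly the range the paper restricts to in its own discussion.
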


\begin{remark}\label{weaker}
Note that \autoref{thm:weak_sewing_with_homogeneity} is weaker than \autoref{thm:usual_Sewing_lemma}-\autoref{thm:simple_formulation_of_weak_Sewing} above because it further requires to assume the condition of homogeneity \eqref{eq:sewing_condition_homogeneity}.
Indeed our proof requires the assumption \eqref{eq:sewing_condition_homogeneity} above in order to retrieve this result as an application of the Reconstruction Theorem.
Still, it is not clear to us whether the Reconstruction Theorem can yield the more general \autoref{thm:usual_Sewing_lemma}, \autoref{thm:simple_formulation_of_weak_Sewing}.
\end{remark}

\printbibliography

\end{document}